\newcommand{\za}{\alpha}
\newcommand{\zb}{\beta}
\newcommand{\zd}{\delta}
\newcommand{\zg}{\gamma}
\DeclareMathOperator{\Ext}{Ext}
\DeclareMathOperator{\Hom}{Hom}
\DeclareMathOperator{\pd}{pd}
\DeclareMathOperator{\id}{id}
\DeclareMathOperator{\add}{add}
\DeclareMathOperator{\Gen}{Gen}
\DeclareMathOperator{\End}{End}
\DeclareMathOperator{\ind}{ind}
\newtheorem{theorem}{Theorem}[section]
\newtheorem{lemma}[theorem]{Lemma}
\newtheorem{prop}[theorem]{Proposition}
\newtheorem{cor}[theorem]{Corollary}
\theoremstyle{definition}
\newtheorem{mydef}[theorem]{Definition}
\newtheorem{example}[theorem]{Example}
\begin{document}

\thispagestyle{empty}

\title{$\uptau$-RIGID MODULES FROM TILTED TO CLUSTER-TILTED ALGEBRAS}
\author{Stephen Zito\thanks{2010 \emph{Mathematics Subject Classification}.  16G20, 16D90.\ \emph{Key words and phrases}:  Tilted algebras, cluster-tilted algebras, split-by-nilpotent extensions, tilting modules, $\tau$-rigid modules.}}
        
\maketitle

\begin{abstract}
We study the module categories of a tilted algebra $C$ and the corresponding cluster-tilted algebra $B=C\ltimes E$ where $E$ is the $C$-$C$-bimodule $\Ext_C^2(DC,C)$.  In particular, we study which $\tau_C$-rigid $C$-modules are also $\tau_B$-rigid $B$-modules. 
\end{abstract}

\section{Introduction} 
We are interested in studying the representation theory of cluster-tilted algebras which are finite dimensional associative algebras that were introduced by Buan, Marsh, and Reiten in $\cite{BMR}$ and, independently, by Caldero, Chapoton, and Schiffler in $\cite{CCS}$ for type $\mathbb{A}$.
\par
One motivation for introducing these algebras came from Fomin and Zelevinsky's cluster algebras $\cite{FZ}$.  Cluster-tilted algebras are the endomorphism algebras of the so-called tilting objects in the cluster category of $\cite{BMMRRT}$.  Many people have studied cluster-tilted algebras in this context, see for example $\cite{BMR,BMR2,BMR3,CCS2,KR}$.
\par
The second motivation came from classical tilting theory, see $\cite{H}$.  Tilted algebras are the endomorphism algebras of tilting modules over hereditary algebras, whereas cluster-tilted algebras are the endomorphism algebras of cluster-tilting objects over cluster categories of hereditary algebras.  This similarity in the two definitions lead to the following precise relation between tilted and cluster-tilted algebras, which was established in $\cite{ABS}$.
\par
There is a surjective map
\[
\{\text{tilted}~\text{algebras}\}\longmapsto \{\text{cluster-tilted}~\text{algebras}\}
\]
\[
C\longmapsto B=C\ltimes E
\]
where $E$ denotes the $C$-$C$-bimodule $E=\text{Ext}_C^2(DC,C)$ and $C\ltimes E$ is the trivial extension.
\par
This result allows one to define cluster-tilted algebras without using the cluster category.  It is natural to ask how the module categories of $C$ and $B$ are related.  In this work, we investigate the $\tau$-rigidity of a $C$-module when the same module is viewed as a $B$-module via the standard embedding.  We let $M$ be a right $C$-module and define a right $B=C\ltimes E$ action on $M$ by 
\[
M\times B\rightarrow M~~,~~(m,(c,e))\mapsto mc.
\]
\par
 Our main results deal with $C$-modules that satisfy $\text{Hom}_C(M,\tau_CM)=0$ otherwise know as $\tau_C$-$\it{rigid~modules}$.  We show the following.
 \begin{theorem}
 \label{Main1}
 Let M be a partial tilting C-module.  Then M is $\tau_B$-rigid if and only if $\emph{Hom}_C(\tau_C^{-1}\Omega_C^{-1}M,\Gen M)=0$.
 \end{theorem}
As a consequence, we prove the following.
  \begin{cor} 
  Let M be an indecomposable $\tau_C$-rigid module.  Then M is $\tau_B$-rigid if and only if $\emph{Hom}_C(\tau_C^{-1}\Omega_C^{-1}M,\Gen M)=0.$
 \end{cor}
  We also prove necessary and sufficient conditions for a $\tau_C$-tilting module to be $\tau_B$-tilting.
  \begin{prop}  
  \label{Main Prop}
 Let  M be a $\tau_C$-tilting module.  Then M is $\tau_B$-tilting if and only if $\id_CM\leq1$.
\end{prop}

\section{Notation and Preliminaries}
We now set the notation for the remainder of this paper. All algebras are assumed to be finite dimensional over an algebraically closed field $k$.  Suppose $Q=(Q_0,Q_1)$ is a connected quiver without oriented cycles where $Q_0$ denotes a finite set of vertices and $Q_1$ denotes a finite set of oriented arrows.  By $kQ$ we denote the path algebra of $Q$.  If $C$ is a $k$-algebra then denote by $\mathop{\text{mod}}C$ the category of finitely generated right $C$-modules and by $\mathop{\text{ind}}C$ a set of representatives of each isomorphism class of indecomposable right $C$-modules.  Given $M\in\mathop{\text{mod}}C$, the projective dimension of $M$ in $\mathop{\text{mod}}C$ is denoted $\pd_CM$ and its injective dimension by $\id_CM$.  We denote by $\add M$ the smallest additive full subcategory of $\mathop{\text{mod}}C$ containing $M$, that is, the full subcategory of $\mathop{\text{mod}}C$ whose objects are the direct sums of direct summands of the module $M$.  We let $\tau_C$ and $\tau^{-1}_C$ be the Auslander-Reiten translations in $\mathop{\text{mod}}C$.  We let $D$ be the standard duality functor $\Hom_k(-,k)$.  Also, $\Omega_C M$ and $\Omega_C^{-1}M$ will denote the first syzygy and first cosyzygy of $M$.  Finally,  let gl.dim stand for the global dimension of an algebra.  
 
 \subsection{Tilted Algebras}
  Tilting theory is one of the main themes in the study of the representation theory of algebras.  Given a $k$-algebra $C$, one can construct a new algebra $B$ in such a way that the corresponding module categories are closely related.  The main idea is that of a tilting module.
   \begin{mydef} Let $C$ be an algebra.  A $C$-module $T$ is a $\emph{partial tilting module}$ if the following two conditions are satisfied: 
   \begin{enumerate}
   \item[($\text{1}$)] $\pd_CT\leq1$.
   \item[($\text{2}$)] $\Ext_C^1(T,T)=0$.
   \end{enumerate}
   A partial tilting module $T$ is called a $\emph{tilting module}$ if it also satisfies the following additional condition:
   \begin{enumerate}
   \item[($\text{3}$)] There exists a short exact sequence $0\rightarrow C\rightarrow T'\rightarrow T''\rightarrow 0$ in $\mathop{\text{mod}}C$ with $T'$ and $T''$ $\in \add T$.
   \end{enumerate}
   \end{mydef}
We recall that a $C$-module $M$ is $\it{faithful}$ if its right annihilator
   \[
   \mathop{\text{Ann}}M=\{c\in C~|~Mc=0\}.
   \]
   vanishes.  It follows easily from (3) that any tilting module is faithful.  We will need the following characterization of faithful modules.  Define $\mathop{Gen} M$ to be the class of all modules $X$ in $\mathop{\text{mod}}C$ generated by $M$, that is, the modules $X$ such that there exists an integer $d\geq0$ and an epimorphism $M^d\rightarrow X$ of $C$-modules.  Here, $M^d$ is the direct sum of $d$ copies of $M$.  Dually, we define $\mathop{Cogen}C$ to be the class of all modules $Y$ in $\mathop{\text{mod}}C$ cogenerated  by $M$, that is, the modules $Y$ such that there exist an integer $d\geq0$ and a monomorphism $Y\rightarrow M^d$ of $C$-modules.
   \begin{lemma}$\emph{\cite[VI,~Lemma~2.2.]{ASS}}$. 
   \label{faithful}
   Let C be an algebra and M a C-module.  The following are equivalent:
   \begin{enumerate}
   \item[$\emph{(a)}$] M is faithful.
   \item[$\emph{(c)}$] C is cogenerated by M.
   \item[$\emph{(d)}$] DC is generated by M.
   \end{enumerate}
   \end{lemma}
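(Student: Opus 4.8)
I would treat the three conditions as two independent equivalences, $\mathrm{(a)}\Leftrightarrow\mathrm{(c)}$ and $\mathrm{(a)}\Leftrightarrow\mathrm{(d)}$, both organized around the two-sided ideal $\mathop{\text{Ann}}M$. The plan is to prove the first directly and then to deduce the second from the first by applying the duality $D$, so that condition $\mathrm{(d)}$ on right modules is reduced to condition $\mathrm{(c)}$ on left modules.

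For $\mathrm{(a)}\Leftrightarrow\mathrm{(c)}$ I would identify $\mathop{\text{Ann}}M$ with the reject of $M$ in $A$. Every $A$-linear map $A_A\to M$ has the form $a\mapsto ma$ for a unique $m=f(1)\in M$, with kernel the right ideal $\{a\in A\mid ma=0\}$; hence $\mathop{\text{Ann}}M=\bigcap_{m\in M}\{a\mid ma=0\}$. If $A$ is cogenerated by $M$, say via a monomorphism $f\colon A\to M^{d}$, then any $a\in\mathop{\text{Ann}}M$ satisfies $f(a)=f(1)a=0$, so $a=0$ and $M$ is faithful; this is $\mathrm{(c)}\Rightarrow\mathrm{(a)}$. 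Conversely, assuming $M$ faithful, I would use that $A$ is finite dimensional: among all finite intersections of the right ideals $\{a\mid ma=0\}$ pick one of minimal dimension, realized by $m_1,\dots,m_d$; minimality forces $\bigcap_{i=1}^{d}\{a\mid m_i a=0\}\subseteq\{a\mid ma=0\}$ for every further $m$, so this intersection equals $\mathop{\text{Ann}}M$. The map $A\to M^{d}$, $a\mapsto(m_1a,\dots,m_da)$, then has kernel $\mathop{\text{Ann}}M=0$ and exhibits $A$ as cogenerated by $M$, giving $\mathrm{(a)}\Rightarrow\mathrm{(c)}$.

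For $\mathrm{(a)}\Leftrightarrow\mathrm{(d)}$ I would pass to left modules. The key observation is $\mathop{\text{Ann}}M=\mathop{\text{Ann}}(DM)$ inside $A$: for $a\in A$ one has $a\cdot DM=0$ exactly when $\phi(ma)=0$ for all $\phi\in DM$ and all $m$, that is, when $Ma=0$. Thus $M$ is a faithful right module if and only if $DM$ is a faithful left module. Applying the left-module analogue of the equivalence $\mathrm{(a)}\Leftrightarrow\mathrm{(c)}$ just established, $DM$ faithful means the left regular module ${}_AA$ is cogenerated by $DM$, i.e.\ there is a monomorphism ${}_AA\to(DM)^{d}$. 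Since $D$ is an exact contravariant duality, it sends this monomorphism to an epimorphism $M^{d}\to D({}_AA)=DA$ of right modules, so $DA$ is generated by $M$; reversing each step gives the converse.

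The only genuinely delicate points are the passage to a \emph{finite} power $M^{d}$ in $\mathrm{(a)}\Rightarrow\mathrm{(c)}$, which is exactly where finite dimensionality of $A$ is used, and bookkeeping of sides in the duality step — in particular remembering that the $DA$ of condition $\mathrm{(d)}$ is the right module $D({}_AA)$, so that the interchange of monomorphisms and epimorphisms under $D$ lands in the intended module category. Everything else is formal manipulation of reject and trace.
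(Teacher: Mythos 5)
The paper offers no proof of this lemma (it is quoted directly from [ASS, VI, Lemma 2.2]), and your argument is correct and essentially the standard one from that reference: realize $\mathop{\text{Ann}}M$ as a \emph{finite} intersection of the kernels of the maps $a\mapsto ma$ to get (a) $\Leftrightarrow$ (c), then transport the cogeneration statement through the duality $D$ — with exactly the side bookkeeping you flag, namely that the $DA$ in (d) is the right module $D({}_AA)$ — to get (a) $\Leftrightarrow$ (d). The only cosmetic difference is that since $M\in\mathop{\text{mod}}A$ is finite dimensional over $k$, taking $m_1,\dots,m_d$ to be a $k$-basis of $M$ gives $\bigcap_{i=1}^{d}\{a\in A\mid m_ia=0\}=\mathop{\text{Ann}}M$ immediately, so your minimal-dimension selection, while valid (and more general, as it needs only $\dim_k A<\infty$), is not needed here.
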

Tilting modules induce torsion pairs in a natural way.  We consider the restriction to a subcategory $\mathcal{C}$ of a functor $F$ defined originally on a module category, and we denote it by $F|_{\mathcal{C}}$.  Also, let $S$ be a subcategory of a category $\mathcal{C}$.  We say $S$ is a $full$ $subcategory$ of $\mathcal{C}$ if, for each pair of objects $X$ and $Y$ of $S$, $\Hom_S(X,Y)=\Hom_{\mathcal{C}}(X,Y)$. 
   \begin{mydef} A pair of full subcategories $(\mathcal{T},\mathcal{F})$ of $\mathop{\text{mod}}C$ is called a $\emph{torsion pair}$ if the following conditions are satisfied:
   \begin{enumerate}
   \item[(a)] $\text{Hom}_C(M,N)=0$ for all $M\in\mathcal{T}$, $N\in\mathcal{F}.$
   \item[(b)] $\text{Hom}_C(M,-)|_\mathcal{F}=0$ implies $M\in\mathcal{T}.$
   \item[(c)] $\text{Hom}_C(-,N)|_\mathcal{T}=0$ implies $N\in\mathcal{F}.$
   \end{enumerate}
   \end{mydef}
   Consider the following full subcategories of $\mathop{\text{mod}}C$ where $T$ is a tilting $C$-module.
 \[
 \mathcal{T}(T)=\{M\in\mathop{\text{mod}}C~|~ \text{Ext}_C^{1}(T,M)=0\}
 \]
 \[
 \mathcal{F}(T)=\{M\in\mathop{\text{mod}}C~|~\text{Hom}_C(T,M)=0\}
 \]
 Then $(\mathcal{T}(T),\mathcal{F}(T))$ is a torsion pair in $\mathop{\text{mod}}C$ called the $\it{induced~torsion~pair}$ of $T$.  Considering the endomorphism algebra $B=\End_CT$, there is an induced torsion pair, $(\mathcal{X}(T),\mathcal{Y}(T))$, in $\mathop{\text{mod}}B$.
 \[
 \mathcal{X}(T)=\{M\in\mathop{\text{mod}}B~|~M\otimes_BT=0\}
 \]
 \[
 \mathcal{Y}(T)=\{M\in\mathop{\text{mod}}B~|~\text{Tor}_1^B(M,T)=0\}
 \]
 
 We now state the definition of a tilted algebra.
 \begin{mydef} Let $C$ be a hereditary algebra with $T$ a tilting $C$-module.  Then the algebra $B=\End_CT$ is called a $\emph{tilted algebra}$.
 \end{mydef}
 
 The following proposition describes several facts about tilted algebras.  Let $C$ be an algebra and $M$, $N$ be two indecomposable $C$-modules.  A $\it{path}$ in $\mathop{\text{mod}}C$ from $M$ to $N$ is a sequence 
 \[
 M=M_0\xrightarrow{f_1}M_1\xrightarrow{f_2}M_2\rightarrow\dots\xrightarrow{f_s}M_s=N
 \]
 where $s\geq0$, all the $M_i$ are indecomposable, and all the $f_i$ are non-zero non-isomorphisms.  In this case, $M$ is called a $\it{predecessor}$ of $N$ in $\mathop{\text{mod}}C$ and $N$ is called a $\it{successor}$ of $M$ in $\mathop{\text{mod}}C$.  Also, we say a torsion pair $(\mathcal{T},\mathcal{F})$
 is $\it{split}$ if every indecomposable $C$-module belongs to either $\mathcal{T}$ or $\mathcal{F}$.
 
\begin{prop}$\emph{\cite[VIII,~Lemma~3.2.]{ASS}}$.
 \label{Tilting Prop}
 Let $C$ be a hereditary algebra, $T$ a tilting $C$-module, and $B=\End_CT$ the corresponding tilted algebra.  Then
 \begin{enumerate}
 \item[$\emph{(a)}$] $\mathop{\emph{gl.dim}}B\leq2$.
 \item[$\emph{(b)}$] For all $\text{M}\in\ind B$, $\id_BM\leq1$ or $\pd_BM\leq1$.
 \item[$\emph{(c)}$] For all $\text{M}\in\mathcal{X}(T)$, $\id_BM\leq1$.
 \item[$\emph{(d)}$] For all $\text{M}\in\mathcal{Y}(T)$, $\pd_BM\leq1$.
 \item[$\emph{(e)}$] $(\mathcal{X}(T),\mathcal{Y}(T))$ is split.
 \item[$\emph{(f)}$] $\mathcal{Y}(T)$ is closed under predecessors and $\mathcal{X}(T)$ is closed under successors.
 \end{enumerate}
 \end{prop}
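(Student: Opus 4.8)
The plan is to build everything on the Brenner--Butler theorem, which provides the quasi-inverse equivalences $\Hom_A(T,-)\colon\mathcal{T}(T)\to\mathcal{Y}(T)$ and $\Ext_A^1(T,-)\colon\mathcal{F}(T)\to\mathcal{X}(T)$ (with quasi-inverses $-\otimes_C T$ and $\text{Tor}_1^C(-,T)$), together with the identifications $\mathcal{Y}(T)=\Hom_A(T,\mathcal{T}(T))$ and $\mathcal{X}(T)=\Ext_A^1(T,\mathcal{F}(T))$, and the fact that the indecomposable projective $C$-modules are exactly the $\Hom_A(T,T')$ with $T'\in\add T$. Since $\add T\subseteq\mathcal{T}(T)$, this already records that every projective $C$-module lies in $\mathcal{Y}(T)$, a fact I reuse repeatedly.

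The cornerstone is part (d). Given $M\in\mathcal{Y}(T)$, write $M=\Hom_A(T,X)$ with $X\in\mathcal{T}(T)=\Gen T$. I would choose a right $\add T$-approximation $T_0\twoheadrightarrow X$ (an epimorphism because $X\in\Gen T$) with kernel $K$, and apply $\Hom_A(T,-)$ to $0\to K\to T_0\to X\to 0$. A first long exact sequence, using $\Ext_A^1(T,T_0)=0$ and $\Ext_A^1(T,X)=0$, shows $K\in\mathcal{T}(T)$; a second long exact sequence obtained by applying $\Hom_A(-,Y)$ for $Y\in\mathcal{T}(T)$, together with $\Ext_A^2=0$ (hereditariness), shows $\Ext_A^1(K,Y)=0$, so $K$ is $\Ext$-projective in $\mathcal{T}(T)$ and hence $K\in\add T$. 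Applying $\Hom_A(T,-)$ now yields $0\to\Hom_A(T,K)\to\Hom_A(T,T_0)\to M\to 0$ with both left terms projective, so $\pd_C M\leq 1$. Part (c) then follows by duality: $C^{\mathrm{op}}=\End_{A^{\mathrm{op}}}(DT)$ is again a tilted algebra over the hereditary algebra $A^{\mathrm{op}}$, the functor $D$ carries the torsion pair $(\mathcal{X}(T),\mathcal{Y}(T))$ to the one induced by $DT$ so that $D(\mathcal{X}(T))=\mathcal{Y}(DT)$, and $\id_C M=\pd_{C^{\mathrm{op}}}DM\leq 1$ by (d) applied to $C^{\mathrm{op}}$.

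With (d) in hand, part (a) is immediate: for an arbitrary $C$-module $M$ the syzygy $\Omega_C M$ embeds in a projective, which lies in $\mathcal{Y}(T)$; since $\mathcal{Y}(T)$ is a torsion-free class it is closed under submodules, so $\Omega_C M\in\mathcal{Y}(T)$ and $\pd_C\Omega_C M\leq 1$ by (d), giving $\pd_C M\leq 2$, whence $\mathop{\mathrm{gl.dim}}C\leq 2$. For the splitting (e) I would show $\Ext_C^1(\mathcal{Y}(T),\mathcal{X}(T))=0$, which forces every canonical torsion sequence to split and so places each indecomposable in $\mathcal{X}(T)$ or $\mathcal{Y}(T)$. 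Writing $Y=\Hom_A(T,U)$ and $X=\Ext_A^1(T,V)$ with $U\in\mathcal{T}(T)$, $V\in\mathcal{F}(T)$, the tilting connecting formula gives $\Ext_C^1(Y,X)\cong\Ext_A^2(U,V)$, which vanishes precisely because $A$ is hereditary. This is the step I expect to be the main obstacle, as it is the one place where hereditariness enters essentially and non-formally, and where I must invoke (or reprove, via a projective presentation and the derived equivalence $\mathrm{RHom}_A(T,-)$) the degree-shifting isomorphism relating $\Ext_C$ to $\Ext_A$.

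The remaining parts are bookkeeping. Part (b): by (e) any indecomposable $C$-module lies in $\mathcal{X}(T)$ or $\mathcal{Y}(T)$, whence $\id_C M\leq 1$ by (c) or $\pd_C M\leq 1$ by (d). Part (f): it suffices to treat a single nonzero nonisomorphism $f\colon M\to N$ between indecomposables and then argue along paths by induction. If $N\in\mathcal{Y}(T)$ but $M\notin\mathcal{Y}(T)$, then $M\in\mathcal{X}(T)$ by (e), so $f\in\Hom_C(\mathcal{X}(T),\mathcal{Y}(T))=0$ by the orthogonality axiom of the torsion pair, contradicting $f\neq 0$; thus $\mathcal{Y}(T)$ is closed under predecessors, and the closure of $\mathcal{X}(T)$ under successors is the symmetric statement.
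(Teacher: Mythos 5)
The paper itself gives no proof of this proposition; it is quoted from \cite[VIII, Lemma 3.2]{ASS}, so your attempt must be judged against the standard argument. Your part (d) \emph{is} essentially that argument, and (a), (b), (f) are correct, as is the shift formula $\Ext^1_C(\Hom_A(T,U),\Ext^1_A(T,V))\cong\Ext^2_A(U,V)$ underlying (e) (a legitimate derived-category substitute for the elementary connecting lemma in \cite{ASS}). One small misattribution in (d): that $K\in\mathcal{T}(T)$ follows from the surjectivity of $\Hom_A(T,T_0)\rightarrow\Hom_A(T,X)$, i.e.\ from the approximation property of $T_0\rightarrow X$ (which you did build in), not from $\Ext_A^1(T,T_0)=\Ext_A^1(T,X)=0$; for an arbitrary epimorphism from $\add T$ the step fails (over $A=k(1\rightarrow 2)$ with $T=P_1\oplus S_1$, the cover $P_1\rightarrow S_1$ has kernel $S_2\notin\mathcal{T}(T)$).

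The genuine gap is in (c). The transport claim $D(\mathcal{X}(T))=\mathcal{Y}(DT)$ is false: already for $T=A$ one has $\mathcal{X}(T)=0$, while $\mathcal{Y}(DT)$ contains the regular module $C^{\mathrm{op}}=\Hom_{A^{\mathrm{op}}}(DT,DT)\neq 0$; the duality does \emph{not} interchange the torsion pair induced by $T$ with the one induced by $DT$. What your argument actually needs is the inclusion $D(\mathcal{X}(T))\subseteq\mathcal{Y}(DT)$, and this is not formal bookkeeping: by the adjunction $D(X\otimes Y)\cong\Hom(X,DY)$ one gets $\mathrm{Tor}_1^{C^{\mathrm{op}}}(DM,DT)\cong D\Ext_C^1(D({}_CT),M)$, where $D({}_CT)\cong\Hom_A(T,DA)$ lies in $\mathcal{Y}(T)$ but is in general \emph{not} an injective $C$-module (${}_CT$ need not be projective --- in the example above ${}_CT$ has a simple non-projective summand). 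So $DM\in\mathcal{Y}(DT)$ for $M\in\mathcal{X}(T)$ is exactly an instance of the vanishing $\Ext_C^1(\mathcal{Y}(T),\mathcal{X}(T))=0$, i.e.\ the key step of your (e), which at this point of your write-up you have not yet proved and which is precisely where heredity enters non-formally. The repair is to reorder: prove the shift isomorphisms $\Ext_C^i(\Hom_A(T,U),\Ext_A^1(T,V))\cong\Ext_A^{i+1}(U,V)$ and $\Ext_C^i(\Ext_A^1(T,V'),\Ext_A^1(T,V))\cong\Ext_A^i(V',V)$ first; then (e) is your argument, and (c) follows without any duality: for $M\in\mathcal{X}(T)$ and any $N$, the canonical sequence $0\rightarrow tN\rightarrow N\rightarrow N/tN\rightarrow 0$ gives $\Ext_C^2(N,M)=0$, since $\pd_C N/tN\leq 1$ by (d) and $\Ext_C^2(tN,M)\cong\Ext_A^2(V',V)=0$, whence $\id_CM\leq 1$ by dimension shifting.
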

 We also need the following characterization of split torsion pairs.
 \begin{prop}$\emph{\cite[VI,~Proposition~1.7]{ASS}}$
 \label{split}
 Let $(\mathcal{T},\mathcal{F})$ be a torsion pair in $\mathop{\emph{mod}}C$.  The following are equivalent:
 \begin{enumerate}
 \item[$\emph{(a)}$] $(\mathcal{T},\mathcal{F})$ is split.
 \item[$\emph{(b)}$] If $M\in\mathcal{T}$, then $\tau_C^{-1}M\in\mathcal{T}$.
 \item[$\emph{(c)}$] If $N\in\mathcal{F}$, then $\tau_CN\in\mathcal{F}$.
 \end{enumerate}
 \end{prop}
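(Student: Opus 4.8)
The plan is to route everything through the single reformulation
\[
(\mathcal{T},\mathcal{F})\text{ is split}\iff\Ext^1_A(F,T)=0\text{ for all }F\in\mathcal{F},\ T\in\mathcal{T},
\]
and then to recognize the right-hand vanishing through the Auslander--Reiten formulas. First I would establish this bridge. For the implication $\Leftarrow$, given $X\in\mathop{\text{mod}}A$ with its canonical sequence $0\to tX\to X\to X/tX\to 0$ (where $tX\in\mathcal{T}$ is the torsion submodule and $X/tX\in\mathcal{F}$), the vanishing of $\Ext^1_A(X/tX,tX)$ forces the sequence to split; if $X$ is indecomposable this places $X$ in $\mathcal{T}$ or in $\mathcal{F}$, which is exactly splitness. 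Conversely, if the pair is split and $0\to T\to E\to F\to 0$ is any extension with $T\in\mathcal{T}$, $F\in\mathcal{F}$, I would decompose $E$ into indecomposables, note that each summand lies in $\mathcal{T}$ or $\mathcal{F}$, and use the orthogonality $\Hom_A(\mathcal{T},\mathcal{F})=0$ together with uniqueness of the canonical sequence to identify $tE=T$ as a direct summand of $E$; hence the extension splits and $\Ext^1_A(F,T)=0$.

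Next I would bring in the Auslander--Reiten formulas
\[
\Ext^1_A(M,N)\cong D\overline{\Hom}_A(N,\tau_A M)\cong D\underline{\Hom}_A(\tau_A^{-1}N,M),
\]
where $\overline{\Hom}$ and $\underline{\Hom}$ denote $\Hom$ modulo maps factoring through injectives, respectively projectives. Taking $M=F\in\mathcal{F}$ and $N=T\in\mathcal{T}$, the first isomorphism reads $\Ext^1_A(F,T)\cong D\overline{\Hom}_A(T,\tau_A F)$: if (c) holds then $\tau_A F\in\mathcal{F}$, so the ordinary $\Hom_A(T,\tau_A F)$ vanishes by orthogonality, and as $\overline{\Hom}$ is a quotient of it, $\Ext^1_A(F,T)=0$. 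Symmetrically, the second isomorphism reads $\Ext^1_A(F,T)\cong D\underline{\Hom}_A(\tau_A^{-1}T,F)$: if (b) holds then $\tau_A^{-1}T\in\mathcal{T}$, so $\Hom_A(\tau_A^{-1}T,F)=0$ and again $\Ext^1_A(F,T)=0$. By the bridge, each of (b) and (c) implies (a).

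For the reverse implications I would exploit almost split sequences rather than the formulas, which avoids any case analysis hidden in the stable $\Hom$ groups. Assume (a), so by the bridge $\Ext^1_A(\mathcal{F},\mathcal{T})=0$. For (a)$\Rightarrow$(b) it suffices to treat indecomposable $M\in\mathcal{T}$, since $\mathcal{T}$ and $\tau_A^{-1}$ respect direct sums. If $M$ is injective then $\tau_A^{-1}M=0\in\mathcal{T}$; otherwise the almost split sequence $0\to M\to E\to\tau_A^{-1}M\to 0$ is non-split, so $\Ext^1_A(\tau_A^{-1}M,M)\neq 0$. Splitness places the indecomposable $\tau_A^{-1}M$ in $\mathcal{T}$ or $\mathcal{F}$; were it in $\mathcal{F}$, the vanishing $\Ext^1_A(\mathcal{F},\mathcal{T})=0$ would contradict this non-vanishing, so $\tau_A^{-1}M\in\mathcal{T}$. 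The implication (a)$\Rightarrow$(c) is dual: for indecomposable $N\in\mathcal{F}$, either $N$ is projective and $\tau_A N=0\in\mathcal{F}$, or $0\to\tau_A N\to E\to N\to 0$ gives $\Ext^1_A(N,\tau_A N)\neq 0$, forcing $\tau_A N\in\mathcal{F}$ by the same dichotomy. Combining, (a), (b), (c) are equivalent.

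I expect the main obstacle to be purely bookkeeping around the Auslander--Reiten formulas: one must pair the correct stable $\Hom$ variant ($\overline{\Hom}$ versus $\underline{\Hom}$) with the correct placement of $\tau_A$ versus $\tau_A^{-1}$, and then argue that the relevant stable $\Hom$ vanishes because the \emph{ordinary} $\Hom$ already vanishes by torsion-pair orthogonality, the stable group being a quotient of it. The genuinely structural content is light: the bridge is elementary torsion-pair theory, and the reverse directions are handed to us by the existence of non-split almost split sequences, which is precisely why no delicate projective/injective case distinctions intrude.
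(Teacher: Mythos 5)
Your proof is correct; note that the paper itself offers no argument for this statement, citing it directly from [ASS, VI, Proposition 1.7]. Your route --- reducing splitness to the vanishing $\Ext^1_A(F,T)=0$ for $F\in\mathcal{F}$, $T\in\mathcal{T}$ via the canonical exact sequence, deducing (b)$\Rightarrow$(a) and (c)$\Rightarrow$(a) from the two Auslander--Reiten formulas (with the stable $\Hom$ groups vanishing because the ordinary ones do, by torsion-pair orthogonality), and handling (a)$\Rightarrow$(b),(c) with non-split almost split sequences --- is sound in every step and is essentially the standard textbook proof of the cited result.
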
 
 
 \subsection{Cluster categories and cluster-tilted algebras} Let $C=kQ$ and let $\mathcal{D}^b(\mathop{\text{mod}}C)$ denote the derived category of bounded complexes of $C$-modules.  The $cluster$ $category$ $\mathcal{C}_C$ is defined as the orbit category of the derived category with respect to the functor $\tau_\mathcal{D}^{-1}[1]$, where $\tau_\mathcal{D}$ is the Auslander-Reiten translation in the derived category and $[1]$ is the shift.  Cluster categories were introduced in $\cite{BMMRRT}$, and in $\cite{CCS}$ for type $\mathbb{A}$, and were further studied in $\cite{A,K,KR,P}$.  They are triangulated categories $\cite{K}$, that are 2-Calabi Yau and have Serre duality $\cite{BMMRRT}$.
 
 \par
 An object $T$ in $\mathcal{C}_C$ is called $cluster$-$tilting$ if $\text{Ext}_{\mathcal{C}_C}^1(T,T)=0$ and $T$ has $|Q_0|$ non-isomorphic indecomposable direct summands.  The endomorphism algebra $\End_{\mathcal{C}_C}T$ of a cluster-tilting object is called a $cluster$-$tilted$ $algebra$ $\cite{BMR}$.

\subsection{Relation Extensions}
 Let $C$ be an algebra of global dimension at most 2 and let $E$ be the $C$-$C$-bimodule $E=\text{Ext}_C^2(DC,C)$.  
 \begin{mydef} The $\emph{relation extension}$ of $C$ is the trivial extension $B=C\ltimes E$, whose underlying $C$-module structure is $C\oplus E$, and multiplication is given by $(c,e)(c^{\prime},e^{\prime})=(cc^{\prime},ce^{\prime}+ec^{\prime})$.
 \end{mydef}  
 
 Relation extensions were introduced in $\cite{ABS}$.  In the special case where $C$ is a tilted algebra, we have the following result.
 \begin{theorem}$\emph{\cite{ABS}}$.  Let C be a tilted algebra.  Then $B=C\ltimes\emph{Ext}_C^2(DC,C)$ is a cluster-tilted algebra.  Moreover all cluster-tilted algebras are of this form.
 \end{theorem}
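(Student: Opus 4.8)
The plan is to prove both assertions inside the cluster category and to reduce the content to a single cohomological computation transported across a derived equivalence. Fix a hereditary algebra $A$, a tilting $A$-module $T$, and set $C=\End_A T$; let $\tilde T$ be the image of $T$ under the canonical functor $\text{mod}\,A\to\mathcal{C}_A$. Since $T$ is tilting, $\tilde T$ is a cluster-tilting object, so $\tilde B:=\End_{\mathcal{C}_A}\tilde T$ is cluster-tilted by definition; the entire content of the first assertion is the algebra isomorphism $\tilde B\cong C\ltimes\Ext_C^2(DC,C)$, which makes sense because $\mathop{\text{gl.dim}}C\leq2$ by Proposition \ref{Tilting Prop}(a).

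First I would unwind the orbit-category definition. Writing $F=\tau_{\mathcal{D}}^{-1}[1]$ for the defining automorphism, one has $\End_{\mathcal{C}_A}\tilde T=\bigoplus_{i\in\mathbb{Z}}\Hom_{\mathcal{D}^b(\text{mod}\,A)}(T,F^iT)$. Because $A$ is hereditary and $T$ is a module, the standard two-term description of Hom-spaces in cluster categories of hereditary algebras $\cite{BMMRRT}$ leaves only the summands $i=0$ and $i=1$. The term $i=0$ is $\Hom_A(T,T)=C$, which gives the subalgebra $C\subseteq\tilde B$, so it remains to identify the term $i=1$, namely $\Hom_{\mathcal{D}^b(\text{mod}\,A)}(T,FT)$, with the bimodule $E=\Ext_C^2(DC,C)$.

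For this I would transport along the triangle equivalence $G=\text{RHom}_A(T,-)\colon\mathcal{D}^b(\text{mod}\,A)\xrightarrow{\sim}\mathcal{D}^b(\text{mod}\,C)$, which exists since $T$ is tilting over the hereditary algebra $A$ and which satisfies $GT=C$. As $\tau_{\mathcal{D}}=\nu_A[-1]$ for the Serre functor $\nu_A$, we have $F=\nu_A^{-1}[2]$; and since every triangle equivalence commutes with Serre functors, $G$ intertwines $\nu_A$ with $\nu_C$, whose inverse is $\text{RHom}_C(DC,-)$. Hence
\[
\Hom_{\mathcal{D}^b(\text{mod}\,A)}(T,FT)\cong\Hom_{\mathcal{D}^b(\text{mod}\,C)}(C,\nu_C^{-1}C[2])=H^2\big(\text{RHom}_C(DC,C)\big)=\Ext_C^2(DC,C),
\]
which is precisely $E$; here $\mathop{\text{gl.dim}}C\leq2$ guarantees that this is the top nonvanishing Ext-group and keeps all complexes bounded. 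To complete the first assertion I would check that this identification respects the $C$-$C$-bimodule structures and yields the trivial-extension multiplication: the only substantive point is that a product of two elements of $E$ lands in $\Hom_{\mathcal{D}^b(\text{mod}\,A)}(T,F^2T)=0$, so $E\cdot E=0$ and the multiplication is forced to be $(c,e)(c',e')=(cc',ce'+ec')$.

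For the second assertion I would invoke the classification of cluster-tilting objects $\cite{BMMRRT}$: every cluster-tilted algebra is $\End_{\mathcal{C}_A}\bar T$ for some cluster-tilting object $\bar T$, and every such $\bar T$ is the image of a tilting module $T'$ over a hereditary algebra $A'$ with $\mathcal{D}^b(\text{mod}\,A')\simeq\mathcal{D}^b(\text{mod}\,A)$. Putting $C'=\End_{A'}T'$, which is tilted, and applying the first assertion to $C'$ gives $\End_{\mathcal{C}_A}\bar T\cong C'\ltimes\Ext_{C'}^2(DC',C')$, so every cluster-tilted algebra is the relation extension of a tilted algebra. I expect the main obstacle to be the central bimodule identification of the third paragraph: pinning down the Serre-functor intertwining and the degree bookkeeping precisely enough that the single surviving orbit term becomes exactly the degree-$2$ cohomology $\Ext_C^2(DC,C)$ with its correct bimodule action, rather than the surrounding formal steps.
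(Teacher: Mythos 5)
The paper does not actually prove this statement — it is quoted directly from \cite{ABS} — so the only meaningful comparison is with that source, and your proposal is essentially the original Assem--Br\"ustle--Schiffler argument: the graded decomposition $\End_{\mathcal{C}_A}\tilde T=\bigoplus_{i\in\mathbb{Z}}\Hom_{\mathcal{D}^b(\mathop{\text{mod}}A)}(T,F^iT)$ with only $i=0,1$ surviving (this vanishing, and the direct-sum formula itself, rest on \cite{BMMRRT} and Keller's construction of the orbit category), the identification of the degree-one part with $\Ext_C^2(DC,C)$ by transporting $F=\nu^{-1}[2]$ across the tilting equivalence, the vanishing of $E\cdot E$ in degree two, and surjectivity via the result of \cite{BMMRRT} that every cluster-tilting object is induced by a tilting module over a derived-equivalent hereditary algebra. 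The argument is correct as a proof sketch, and the one step you defer — checking that the isomorphism $\Hom_{\mathcal{D}}(T,FT)\cong\Ext_C^2(DC,C)$ is one of $C$-$C$-bimodules — is indeed exactly the technical core of the lemma in \cite{ABS}, so you have correctly located where the real work lies.
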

 
  \subsection{Induction and coinduction functors}  A fruitful way to study cluster-tilted algebras is via induction and coinduction functors.  Recall, $D$ denotes the standard duality functor.
 
 \begin{mydef}
 Let $C$ be a subalgebra of $B$, then
 \[
 -\otimes_CB:\mathop{\text{mod}}C\rightarrow\mathop{\text{mod}}B
 \]
 is called the $\emph{induction functor}$, and dually
 \[
 D(B\otimes_CD-):\mathop{\text{mod}}C\rightarrow\mathop{\text{mod}}B
 \]
 is called the $\emph{coinduction functor}$.  Moreover, given $M\in\mathop{\text{mod}}C$, the corresponding induced module is defined to be $M\otimes_CB$, and the coinduced module is defined to be $D(B\otimes_CDM)$.  
 \end{mydef}
 We can say more in the situation when $B$ is a split extension of $C$.
 
 \begin{mydef}
 
 Let $B$ and $C$ be two algebras.  We say $B$ is a $\emph{split extension}$ of $C$ by a nilpotent bimodule $E$ if there exists a short exact sequence of $B$-modules
 \[
 0\rightarrow E\rightarrow B\mathop{\rightleftarrows}^{\mathrm{\pi}}_{\mathrm{\sigma}} C\rightarrow 0
\]
where $\pi$ and $\sigma$ are algebra morphisms, such that $\pi\circ\sigma=1_C$, and $E=\ker\pi$ is nilpotent.  
\end{mydef}
In particular, relation extensions are split extensions.  The next proposition shows a precise relationship between a given $C$-module and its image under the induction and coinduction functors.
\begin{prop}$\emph{\cite[Proposition~3.6]{SS}}$.  
\label{SS, Proposition 3.6}
Suppose B is a split extension of C by a nilpotent bimodule E.  Then, for every $M\in\mathop{\emph{mod}}C$, there exists two short exact sequences of B-modules:
\begin{enumerate}
\item[$\emph{(a)}$] $0\rightarrow M\otimes_CE\rightarrow M\otimes_CB\rightarrow M\rightarrow 0$
\item[$\emph{(b)}$] $0\rightarrow M\rightarrow D(B\otimes_CDM)\rightarrow D(E\otimes_CDM)\rightarrow 0$
\end{enumerate}

\end{prop}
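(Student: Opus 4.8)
The plan is to build both sequences by applying suitable functors to the defining exact sequence of the split extension and to track the $B$-module structures at every step. The starting point is
\[
0\rightarrow E\rightarrow B\xrightarrow{\pi}C\rightarrow0.
\]
Because $\sigma$ is an algebra morphism with $\pi\circ\sigma=1_C$, this sequence splits both as a sequence of left $C$-modules and as a sequence of right $C$-modules, the section in each case being $\sigma$ (left $C$-linearity of $\sigma$ follows from $\sigma(c'c)=\sigma(c')\sigma(c)$, and right $C$-linearity symmetrically). Moreover, since $E=\ker\pi$ is a two-sided ideal of $B$, the same sequence is simultaneously one of $(C,B)$-bimodules (left $C$-action through $\sigma$, right $B$-action by multiplication on $E$ and $B$, and right $B$-action on $C$ given by $c\cdot b=c\pi(b)$) and of $(B,C)$-bimodules (symmetrically). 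The key point to record is that $\iota\colon E\hookrightarrow B$ and $\pi$ are bimodule morphisms for both structures, which is precisely where $\pi\circ\sigma=1_C$ and the multiplicativity of $\pi$ are used.

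For part (a) I regard the sequence above as one of $(C,B)$-bimodules and apply the right exact functor $M\otimes_C-$. Since the sequence splits as left $C$-modules and any additive functor preserves a split short exact sequence, the tensored sequence remains short exact; and since $\iota$ and $\pi$ are $(C,B)$-bimodule morphisms, both $M\otimes_C\iota$ and $M\otimes_C\pi$ are right $B$-module morphisms. Using the canonical isomorphism $M\otimes_C C\cong M$ and checking that the resulting right $B$-action on $M$ is exactly $m\cdot b=m\pi(b)$, the action defined in the paper, yields
\[
0\rightarrow M\otimes_C E\rightarrow M\otimes_C B\rightarrow M\rightarrow0.
\]

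For part (b) I dualize. I now view the same sequence as one of $(B,C)$-bimodules and apply $-\otimes_C DM$, where $DM=\Hom_k(M,k)$ is the corresponding left $C$-module. This functor is right exact and, because the sequence splits as right $C$-modules, preserves exactness; as each structure map is a $(B,C)$-bimodule morphism, I obtain a short exact sequence of left $B$-modules
\[
0\rightarrow E\otimes_C DM\rightarrow B\otimes_C DM\rightarrow C\otimes_C DM\rightarrow0,
\]
with $C\otimes_C DM\cong DM$. Applying the exact contravariant duality $D=\Hom_k(-,k)$, which interchanges left and right $B$-modules, and using the natural isomorphism $D(DM)\cong M$ together with the identification of the induced right $B$-action on $M$ with the one via $\pi$, produces
\[
0\rightarrow M\rightarrow D(B\otimes_C DM)\rightarrow D(E\otimes_C DM)\rightarrow0.
\]

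The exactness claims are routine once one observes that a split short exact sequence is preserved by every additive functor, so the genuine bookkeeping—and the step I expect to be most delicate—is confirming $B$-linearity throughout: that the maps surviving $M\otimes_C-$, $-\otimes_C DM$, and $D$ are honest $B$-module morphisms, and that the a priori distinct right $B$-module structures on $M$ (the one defined in the paper by $m\cdot b=m\pi(b)$, and the ones arising as $M\otimes_C C$ and as $D(C\otimes_C DM)$) all coincide. It is worth noting that the nilpotency of $E$ plays no role in the existence of these sequences; it is the splitness of the extension, encoded in the section $\sigma$ and its one-sided $C$-linearity, that does all the work.
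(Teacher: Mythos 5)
Your argument is correct: tensoring the defining sequence $0\rightarrow E\rightarrow B\rightarrow C\rightarrow 0$ with $M\otimes_C-$ (respectively $-\otimes_C DM$ followed by $D$), using that $\sigma$ splits the sequence as one-sided $C$-modules so that exactness survives, and verifying that the surviving maps are $B$-linear and that the induced $B$-action on $M$ is the one via $\pi$, is exactly the standard proof. Note that the paper itself gives no proof of this statement --- it is quoted from \cite[Proposition~3.6]{SS} --- and your argument coincides with the one given there, including your (accurate) observation that nilpotency of $E$ is not needed for the existence of the two sequences.
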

 It was shown in $\cite{SS}(3.6)$ that, as a $C$-module, $M\otimes_CB\cong M\oplus (M\otimes_CE)$.

\subsection{Induced and coinduced modules in cluster-tilted algebras} In this section we cite several properties of the induction and coinduction functors particularly when $C$ is an algebra of global dimension at most 2 and $B=C\ltimes E$ is the trivial extension of $C$ by the $C$-$C$-bimodule $E=\text{Ext}_C^2(DC,C)$.  In the specific case when $C$ is also a tilted algebra, $B$ is the corresponding cluster-tilted algebra.
 \begin{prop}$\emph{\cite[Proposition~4.1]{SS}}$. 
 \label{SS Prop 4.1}
 Let C be an algebra of global dimension at most 2.  Then
 \begin{enumerate}
 \item[$\emph{(a)}$] $E\cong \tau_C^{-1}\Omega_C^{-1}C$.
 \item[$\emph{(b)}$] $DE\cong \tau_C\Omega_C(DC)$.
 \item[$\emph{(c)}$] $M\otimes_C E\cong\tau_C^{-1}\Omega_C^{-1}M$.
 \item[$\emph{(d)}$] $D(E\otimes_C DM)\cong\tau_C\Omega_C M$.
 
 \end{enumerate}
 \end{prop}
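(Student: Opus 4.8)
The plan is to deduce all four isomorphisms from the single natural isomorphism (c), namely
\[
M\otimes_C E\cong\tau_C^{-1}\Omega_C^{-1}M\qquad(M\in\operatorname{mod}C),
\]
together with its $C^{\mathrm{op}}$-analogue. Indeed, (a) is the special case $M=C$ of (c), since $C\otimes_C E\cong E$. Granting the dual statement (d), part (b) is the special case $M=DC$: here $D(DC)\cong C$ as a left $C$-module, so $E\otimes_C D(DC)\cong E$ and the left-hand side of (d) becomes $DE$ while its right-hand side becomes $\tau_C\Omega_C DC$. Thus the entire content is (c) together with the verification that (d) is its $D$-dual.

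To prove (c) I would first show $M\otimes_C E\cong\Ext_C^2(DC,M)$. Using $\operatorname{gl.dim}C\le2$, fix a projective resolution $0\to P_2\to P_1\to P_0\to DC\to0$ of the right $C$-module $DC$. Applying $\Hom_C(-,C)$ yields a complex of projective left $C$-modules $P_0^{\ast}\to P_1^{\ast}\to P_2^{\ast}$ whose cokernel at the top spot is $E=\Ext_C^2(DC,C)$, giving a presentation $P_1^{\ast}\to P_2^{\ast}\to E\to0$. Since $M\otimes_C-$ is right exact and $M\otimes_C\Hom_C(P,C)\cong\Hom_C(P,M)$ naturally for every finitely generated projective $P$ (check on $P=C$ and extend additively), tensoring this presentation with $M$ identifies $M\otimes_C E$ with $\coker\bigl(\Hom_C(P_1,M)\to\Hom_C(P_2,M)\bigr)$, which is exactly $\Ext_C^2(DC,M)$ because $P_2$ is the last term of the resolution.

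It then remains to identify $\Ext_C^2(DC,M)$ with $\tau_C^{-1}\Omega_C^{-1}M$, and here I would pass through $C^{\mathrm{op}}$ via the exact duality $D$. It gives $\Ext_C^2(DC,M)\cong\Ext_{C^{\mathrm{op}}}^2(DM,C)$ since $D(DC)\cong C$. Choosing a projective resolution $0\to Q_2\to Q_1\to Q_0\to DM\to0$ of the right $C^{\mathrm{op}}$-module $DM$ and applying $\Hom_{C^{\mathrm{op}}}(-,C)$, the group $\Ext_{C^{\mathrm{op}}}^2(DM,C)$ is the cokernel of $Q_1^{\ast}\to Q_2^{\ast}$; but that same cokernel is $\operatorname{Tr}(\Omega_{C^{\mathrm{op}}}DM)$, read off from the projective presentation $Q_2\to Q_1\to\Omega_{C^{\mathrm{op}}}DM\to0$. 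Finally, since $D\Omega_C^{-1}M\cong\Omega_{C^{\mathrm{op}}}DM$ and $\tau_C^{-1}=\operatorname{Tr}D$, we obtain $\operatorname{Tr}(\Omega_{C^{\mathrm{op}}}DM)\cong\tau_C^{-1}\Omega_C^{-1}M$, proving (c). Applying the now-established (c) over $C^{\mathrm{op}}$ to the module $DM$ and dualizing with $D$ — using $D\tau_{C^{\mathrm{op}}}^{-1}=\tau_C D$, $D\Omega_{C^{\mathrm{op}}}^{-1}=\Omega_C D$, and the identification $E^{\mathrm{op}}\cong E$ — then yields (d).

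The hard part, I expect, is bookkeeping rather than homological algebra. One must check that each of these isomorphisms is natural in $M$ and compatible with the $C$-$C$-bimodule structure on $E$ (and on $\Ext_C^2(DC,-)$ coming from the left action on $DC$), so that the specializations $M=C$ and $M=DC$ recover (a) and (b) as isomorphisms of bimodules, not merely of $k$-spaces. The other delicate point is the transfer to $C^{\mathrm{op}}$ needed for (d): verifying $E^{\mathrm{op}}=\Ext_{C^{\mathrm{op}}}^2(DC^{\mathrm{op}},C^{\mathrm{op}})\cong E$ with the two sides interchanged, and that $DM\otimes_{C^{\mathrm{op}}}E^{\mathrm{op}}\cong E\otimes_C DM$, so that dualizing the $C^{\mathrm{op}}$-instance of (c) produces exactly the left-hand side $D(E\otimes_C DM)$ of (d).
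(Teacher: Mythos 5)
There is no in-paper proof to compare against: the paper imports this statement verbatim from \cite{SS} (it is Proposition~4.1 there), so your argument can only be judged on its own merits. On those merits it is correct, and it is essentially the standard derivation: compute $E$ from a projective resolution $0\to P_2\to P_1\to P_0\to DC\to 0$, use the tensor-evaluation isomorphism $M\otimes_C\Hom_C(P,C)\cong\Hom_C(P,M)$ on finitely generated projectives together with right exactness of $M\otimes_C-$ to get $M\otimes_CE\cong\Ext_C^2(DC,M)$, then pass through the duality $D$ and the transpose to reach $\tau_C^{-1}\Omega_C^{-1}M$; the reductions of (a) and (b) to the special cases $M=C$ and $M=DC$ of (c) and (d) are fine, as is obtaining (d) from the $C^{\mathrm{op}}$-instance of (c).

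One step deserves more care than ``choosing a projective resolution'': to identify $\coker\bigl(Q_1^{\ast}\to Q_2^{\ast}\bigr)$ with $\operatorname{Tr}(\Omega_{C^{\mathrm{op}}}DM)$ as a module --- and not merely up to projective direct summands, i.e., in the stable category --- you must take the \emph{minimal} projective resolution of $DM$. Then $Q_2\to Q_1\to\Omega_{C^{\mathrm{op}}}DM\to 0$ is a minimal projective presentation, and since $\tau^{-1}=\operatorname{Tr}D$ is by definition computed from minimal presentations, the chain $\Ext_{C^{\mathrm{op}}}^2(DM,C)\cong\operatorname{Tr}(\Omega_{C^{\mathrm{op}}}DM)\cong\tau_C^{-1}\Omega_C^{-1}M$ holds on the nose; no generality is lost because the identification of the cokernel with $\Ext^2$ is resolution-independent ($Q_3=0$). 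The remaining items you flag yourself --- naturality in $M$, the right $C$-action on $E$ and on $\Ext_C^2(DC,-)$ obtained by lifting left multiplication on $DC$ to chain maps on the resolution, and the verification $E^{\mathrm{op}}\cong E$ with $DM\otimes_{C^{\mathrm{op}}}E^{\mathrm{op}}\cong E\otimes_CDM$ needed for (d) --- are genuinely just bookkeeping, so with the minimality point made explicit your proposal is a complete proof.
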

 
 The next two results use homological dimensions to extract information about induced and coinduced modules.
 \begin{prop}$\emph{\cite[Proposition~4.2]{SS}}$.  
 \label{SS, Proposition 4.2}
 Let C be an algebra of global dimension at most 2, and let $B=C\ltimes E$.  Suppose $M\in\mathop{\emph{mod}}C$, then
 \begin{enumerate}
 \item[$\emph{(a)}$] $\id_CM\leq1$ if and only if $M\otimes_CB\cong M$.
 \item[$\emph{(b)}$] $\pd_CM\leq1$ if and only if $D(B\otimes_CDM)\cong M$.
 \end{enumerate}
 \end{prop}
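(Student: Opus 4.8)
The plan is to deduce both equivalences directly from the two short exact sequences recorded in Proposition \ref{SS, Proposition 3.6}, combined with the identifications of the bimodule tensor factors in Proposition \ref{SS Prop 4.1}. The guiding observation is that in each case the ``error term'' measuring the failure of $M$ to agree with its induced (resp. coinduced) module is exactly a translate of a syzygy (resp. cosyzygy) of $M$, whose vanishing is controlled by a homological dimension. Throughout I use the standard facts that $\tau_C^{-1}N=0$ iff $N$ is injective, that $\tau_C N=0$ iff $N$ is projective, and that the first cosyzygy $\Omega_C^{-1}M$ is injective iff $\id_C M\leq 1$ while the first syzygy $\Omega_C M$ is projective iff $\pd_C M\leq 1$; the hypothesis that $C$ has global dimension at most $2$ is what makes $E=\Ext_C^2(DC,C)$ and the identifications of Proposition \ref{SS Prop 4.1} available in the first place.

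For part (a), I would start from the exact sequence of $B$-modules
\[
0\to M\otimes_C E\to M\otimes_C B\to M\to 0
\]
supplied by Proposition \ref{SS, Proposition 3.6}(a). By exactness one has $\dim_k(M\otimes_C B)=\dim_k(M\otimes_C E)+\dim_k M$, so the canonical surjection $M\otimes_C B\to M$ is an isomorphism precisely when $M\otimes_C E=0$; in particular even an abstract isomorphism $M\otimes_C B\cong M$ already forces $M\otimes_C E=0$ by this dimension count. Now Proposition \ref{SS Prop 4.1}(c) identifies $M\otimes_C E\cong\tau_C^{-1}\Omega_C^{-1}M$, and $\tau_C^{-1}\Omega_C^{-1}M=0$ holds iff $\Omega_C^{-1}M$ is injective, which holds iff $\id_C M\leq 1$ (a minimal injective copresentation $0\to M\to I^0\to\Omega_C^{-1}M\to 0$ has injective cokernel exactly when the injective resolution stops in degree one). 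Chaining these equivalences in both directions yields (a).

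Part (b) is entirely dual. From Proposition \ref{SS, Proposition 3.6}(b) one has the exact sequence
\[
0\to M\to D(B\otimes_C DM)\to D(E\otimes_C DM)\to 0,
\]
so the same dimension count shows $M\to D(B\otimes_C DM)$ is an isomorphism iff $D(E\otimes_C DM)=0$. Proposition \ref{SS Prop 4.1}(d) identifies $D(E\otimes_C DM)\cong\tau_C\Omega_C M$, and $\tau_C\Omega_C M=0$ iff $\Omega_C M$ is projective iff $\pd_C M\leq 1$, giving (b). I do not anticipate a substantive obstacle: once Propositions \ref{SS, Proposition 3.6} and \ref{SS Prop 4.1} are in hand, the statement is essentially formal. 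The only points deserving care are (i) making explicit that an abstract isomorphism at one end of the short exact sequence forces the remaining term to vanish, which is handled purely by exactness and a $k$-dimension count rather than by splitting the sequence, and (ii) keeping track of the elementary syzygy/translate equivalences so that both directions of each ``if and only if'' are covered.
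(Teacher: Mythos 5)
Your argument is correct: the $k$-dimension count across the two exact sequences of Proposition \ref{SS, Proposition 3.6}, combined with the identifications $M\otimes_CE\cong\tau_C^{-1}\Omega_C^{-1}M$ and $D(E\otimes_CDM)\cong\tau_C\Omega_CM$ from Proposition \ref{SS Prop 4.1} and the standard facts that $\tau_C^{-1}$ annihilates exactly the injectives and $\tau_C$ exactly the projectives (so the relevant term vanishes precisely when $\id_CM\leq1$, respectively $\pd_CM\leq1$), establishes both equivalences, and your care in deducing $M\otimes_CE=0$ from an abstract isomorphism rather than from the canonical map is exactly the right point to flag. Note that this paper states the proposition as a quoted result from \cite{SS} and contains no proof of its own; your derivation is essentially the standard argument given in that reference, so it matches the intended proof rather than offering a different route.
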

 
 The following holds when $C$ is tilted.
 \begin{lemma}$\emph{\cite[Lemma~4.5]{SS}}$
 \label{SS 4.5}
 Let $C$ be a tilted algebra.  Then for all $M\in\mathop{\text{mod}}C$
 \begin{enumerate}
  \item[$\emph{(a)}$] $\id_CM\otimes_CE\leq1$ 
 \item[$\emph{(b)}$] $\pd_CD(E\otimes_CDM)\leq1$
 \end{enumerate} 
 \end{lemma}

 The following lemma is used extensively.
 \begin{lemma}$\emph{\cite[Lemma~3.1]{Zito}}$
 \label{Homological Result}
 Let C be an algebra of global dimension equal to 2 and let M be a C-module.  Then, 
 \begin{enumerate}
 \item[$\emph{(a)}$] $\pd_CM\leq1$ if and only if $\emph{Hom}_C(\tau_C^{-1}\Omega_C^{-1}C,M)=0.$
 \item[$\emph{(b)}$] $\id_CM\leq1$ if and only if $\emph{Hom}_C(M,\tau_C\Omega_C (DC))=0.$
 \end{enumerate}
 \end{lemma}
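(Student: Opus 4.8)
The plan is to handle both parts by turning the displayed $\Hom$-space into the $k$-dual of a tensor product, and then detecting its vanishing directly from the short exact sequences of Proposition~\ref{SS, Proposition 3.6} and the homological criteria of Proposition~\ref{SS, Proposition 4.2}. Throughout I use that $M$ is finite dimensional, so the canonical map $M\to D(DM)$ is an isomorphism of right $C$-modules, where $DM$ is regarded as a left $C$-module.

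For (a), Proposition~\ref{SS Prop 4.1}(a) identifies $\tau_C^{-1}\Omega_C^{-1}C$ with the bimodule $E$, so the condition becomes $\Hom_C(E,M)=0$. Writing $M\cong D(DM)$ and applying the tensor--hom adjunction, I get a natural isomorphism $\Hom_C(E,M)\cong\Hom_C\!\big(E,D(DM)\big)\cong D(E\otimes_C DM)$; hence $\Hom_C(E,M)=0$ if and only if $E\otimes_C DM=0$. Now I feed this into part (b) of Proposition~\ref{SS, Proposition 3.6}, the sequence $0\to M\to D(B\otimes_C DM)\to D(E\otimes_C DM)\to 0$: the canonical injection $M\to D(B\otimes_C DM)$ is an isomorphism exactly when the cokernel $D(E\otimes_C DM)$, equivalently $E\otimes_C DM$, vanishes. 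By Proposition~\ref{SS, Proposition 4.2}(b) that first condition is precisely $\pd_C M\le1$, so the chain $\pd_C M\le1\iff E\otimes_C DM=0\iff\Hom_C(\tau_C^{-1}\Omega_C^{-1}C,M)=0$ closes.

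Part (b) is the mirror image. By Proposition~\ref{SS Prop 4.1}(b) the target is $\tau_C\Omega_C DC\cong DE$, so the condition is $\Hom_C(M,DE)=0$. The same adjunction (now with $DE=\Hom_k(E,k)$) gives $\Hom_C(M,DE)\cong D(M\otimes_C E)$, and Proposition~\ref{SS Prop 4.1}(c) rewrites $M\otimes_C E\cong\tau_C^{-1}\Omega_C^{-1}M$. Thus the $\Hom$ vanishes iff $M\otimes_C E=0$, and the canonical surjection $M\otimes_C B\to M$ of Proposition~\ref{SS, Proposition 3.6}(a), whose kernel is $M\otimes_C E$, is an isomorphism under the same condition; by Proposition~\ref{SS, Proposition 4.2}(a) this is equivalent to $\id_C M\le1$.

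The one delicate point --- and the step I would be most careful about --- is the passage from the $\Hom$ to the tensor product. The instinct is to invoke the Auslander--Reiten formula to rewrite $\Hom_C(\tau_C^{-1}\Omega_C^{-1}C,M)$ as the dual of an $\Ext$-group, but that formula only computes the Hom modulo morphisms factoring through injectives, and here the stable version gives the wrong statement: for $M=DC$ the injectively stable group vanishes while $\pd_C DC=2$ (and dually for $M=C$ in part (b)). It is therefore essential to keep the honest $\Hom$ throughout, which is exactly what the adjunction isomorphisms $\Hom_C(E,M)\cong D(E\otimes_C DM)$ and $\Hom_C(M,DE)\cong D(M\otimes_C E)$ accomplish, after which Propositions~\ref{SS, Proposition 3.6} and \ref{SS, Proposition 4.2} finish the argument with no reference to Auslander--Reiten duality.
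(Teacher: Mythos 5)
Your proof is correct, and it takes a genuinely different route from the paper's. The paper proves (a) by dimension shifting along the sequence $0\to C\to I_0\to\Omega_C^{-1}C\to 0$, obtaining $\Ext_C^1(M,\Omega_C^{-1}C)\cong\Ext_C^2(M,C)$, and then invoking the Auslander--Reiten formula to translate this $\Ext$-group into a dual $\Hom$-space (dually for (b)). You never touch $\Ext$ or AR duality at all: the adjunction isomorphisms $\Hom_C(E,M)\cong D(E\otimes_CDM)$ and $\Hom_C(M,DE)\cong D(M\otimes_CE)$ reduce everything to the vanishing of a tensor product, which you read off from the exact sequences of Proposition~\ref{SS, Proposition 3.6} together with Proposition~\ref{SS, Proposition 4.2}. (One small point to make explicit: Proposition~\ref{SS, Proposition 4.2} asserts an abstract isomorphism $D(B\otimes_CDM)\cong M$, not that the canonical map is an isomorphism; a dimension count over $k$ in the short exact sequence closes this, since the end terms then have equal dimension and the cokernel must vanish. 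Your standing finite-dimensionality assumption covers this.) What your route buys is precisely the avoidance of the stable-category subtlety: the AR formula gives $\Ext_C^1(M,\Omega_C^{-1}C)\cong D\underline{\Hom}_C(\tau_C^{-1}\Omega_C^{-1}C,M)$, the $\Hom$ modulo maps factoring through \emph{projectives}, and the paper's forward implication silently drops the stabilization; recovering the honest $\Hom$ there requires the extra observation that maps $E\to M$ through a projective vanish, e.g.\ because $\Hom_C(E,C)\cong D(E\otimes_CDC)\cong\tau_C\Omega_CC=0$ by your adjunction and Proposition~\ref{SS Prop 4.1}(d). Your argument sidesteps this gap entirely, which is a genuine improvement in rigor.

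One correction to your cautionary final paragraph: for the orientation in part (a), the AR formula computes $\Hom$ modulo \emph{projectives}, not injectives; the modulo-injectives stabilization is the one that appears in part (b), via $\Ext_C^1(\Omega_CDC,M)\cong D\overline{\Hom}_C(M,\tau_C\Omega_CDC)$. Your test cases $M=DC$ and $M=C$ do correctly show that the wrong-sided stabilizations ($\overline{\Hom}$ in (a), $\underline{\Hom}$ in (b)) trivially vanish and hence yield false statements; the correctly-sided stable versions are in fact true, but still require identification with the ordinary $\Hom$ --- which is exactly the step the paper elides and your proof renders unnecessary.
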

 
The following corollary will be used in section 3.2.
 \begin{cor}
 \label{corollary homological}
 Suppose $\pd_CM\leq1$.  Then for any $N\in\mathop{\emph{mod}}C$, $\emph{Hom}_C(\tau_C^{-1}\Omega_C^{-1}N,M)=0.$
 \end{cor}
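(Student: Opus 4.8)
The plan is to adapt, essentially verbatim, the forward implication of Lemma~\ref{Homological Result}(a), replacing the regular module $C$ by the module $M$ in the cosyzygy. The point is that $\pd_CM\leq1$ controls $\Ext^i_C(M,-)$ for \emph{every} second argument, not just for $C$, so exactly the same homological bookkeeping will go through.

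First I would form the short exact sequence
\[
0\rightarrow M\rightarrow I_0\rightarrow \Omega_C^{-1}M\rightarrow 0
\]
where $I_0$ is an injective envelope of $M$. Applying $\Hom_C(M,-)$ yields the exact piece
\[
\Ext_C^1(M,I_0)\rightarrow\Ext_C^1(M,\Omega_C^{-1}M)\rightarrow\Ext_C^2(M,M).
\]
The left-hand term vanishes because $I_0$ is injective, and the right-hand term vanishes because $\pd_CM\leq1$ forces $\Ext_C^2(M,-)=0$. Exactness then gives $\Ext_C^1(M,\Omega_C^{-1}M)=0$.

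To finish, I would invoke the Auslander--Reiten formula in precisely the form used in Lemma~\ref{Homological Result}, namely $D\Hom_C(\tau_C^{-1}\Omega_C^{-1}M,M)\cong\Ext_C^1(M,\Omega_C^{-1}M)$, obtained by setting the Auslander--Reiten parameters so that $\tau_C$ applied to $\tau_C^{-1}\Omega_C^{-1}M$ returns the cosyzygy $\Omega_C^{-1}M$. Combined with the previous step this gives $D\Hom_C(\tau_C^{-1}\Omega_C^{-1}M,M)=0$, and hence $\Hom_C(\tau_C^{-1}\Omega_C^{-1}M,M)=0$, as claimed.

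I expect essentially no obstacle here beyond keeping careful track of which module sits inside the cosyzygy: one must take the injective envelope and cosyzygy of $M$ rather than of $C$, so that the obstruction term becomes $\Ext_C^2(M,M)$, which is exactly what the hypothesis $\pd_CM\leq1$ annihilates. The only point that might warrant a remark is that the Auslander--Reiten formula naturally produces the stable Hom $\underline{\Hom}_C$; as in the proof of Lemma~\ref{Homological Result}, this distinction is immaterial for the conclusion and may be suppressed, so that the plain $\Hom$ vanishes.
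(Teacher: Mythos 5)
Your proof is correct, but it takes a genuinely different route from the paper, and one step you wave at deserves an explicit line. The paper does not rerun the lemma's homological computation at all: it takes a projective cover $f\colon P_0\rightarrow M$, applies the right exact functor $-\otimes_CE$ to get a surjection $P_0\otimes_CE\rightarrow M\otimes_CE$, identifies these with $\tau_C^{-1}\Omega_C^{-1}P_0\rightarrow\tau_C^{-1}\Omega_C^{-1}M$ via Proposition~\ref{SS Prop 4.1}, and then embeds $\Hom_C(\tau_C^{-1}\Omega_C^{-1}M,M)$ into $\Hom_C(\tau_C^{-1}\Omega_C^{-1}P_0,M)$, which vanishes by Lemma~\ref{Homological Result} because $\tau_C^{-1}\Omega_C^{-1}P_0\in\add(\tau_C^{-1}\Omega_C^{-1}C)$ and $\pd_CM\leq1$; so the paper recycles the lemma as a black box, sourcing everything in $\add E$. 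Your version is more self-contained (no $E$, no induction functor, no projective cover) and makes transparent that the only input is $\Ext_C^2(M,M)=0$. The point you should not suppress: the Auslander--Reiten formula gives $\Ext_C^1(M,\Omega_C^{-1}M)\cong D\underline{\Hom}_C(\tau_C^{-1}\Omega_C^{-1}M,M)$, the stable Hom, and for a general module $N$ the spaces $\underline{\Hom}_C(\tau_C^{-1}N,X)$ and $\Hom_C(\tau_C^{-1}N,X)$ genuinely differ, so "immaterial as in the lemma" is an appeal to the paper's own silence rather than a reason. The upgrade to ordinary Hom is nevertheless legitimate here: since $\mathop{\text{gl.dim}}C\leq2$ one has $\Ext_C^2(-,\Omega_C^{-1}M)\cong\Ext_C^3(-,M)=0$, hence $\id_C\Omega_C^{-1}M\leq1$, which (by the standard refinement of the Auslander--Reiten formulas, e.g.\ ASS IV.2.14, equivalently because $\id_CY\leq1$ forces $\Hom_C(\tau_C^{-1}Y,C)=0$) shows every morphism $\tau_C^{-1}\Omega_C^{-1}M\rightarrow M$ factoring through a projective is zero, so stable and ordinary Hom coincide. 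Adding that one sentence makes your proof complete, and in fact also supplies the justification that the paper's own proof of Lemma~\ref{Homological Result} leaves implicit at the same spot (with $\Omega_C^{-1}C$ in place of $\Omega_C^{-1}M$).
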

 \begin{proof}
 Let $f\!:P\rightarrow N$ be a projective cover of $N$ in $\mathop{\text{mod}}C$.  Apply the functor $-\otimes_CE$ to obtain a surjective morphism $f\otimes_C1_E\!:P\otimes_CE\rightarrow N\otimes_CE$.  Apply $\text{Hom}_C(-,M)$ to obtain the exact sequence
\[
0\rightarrow\text{Hom}_C(N\otimes_CE,M)\xrightarrow{\overline{f\otimes_C1_E}}\text{Hom}_C(P\otimes_CE,M).
\] 
 Now, Proposition $\ref{SS Prop 4.1}$ (c) says $N\otimes_CE\cong\tau_C^{-1}\Omega_C^{-1}N$ and $P\otimes_CE\cong\tau_C^{-1}\Omega_C^{-1}P$.  Thus, we have that $\text{Hom}_C(P\otimes_CE,M)=0$ by Lemma $\ref{Homological Result}$ (a) and conclude $\text{Hom}_C(\tau_C^{-1}\Omega_C^{-1}N,M)=0$.
\end{proof} 
The following main result from $\cite{Zito}$ is needed.
\begin{theorem}$\emph{\cite[Theorem~4.1]{Zito}}$ 
 \label{Main Ext}
 Let M be a rigid C-module with a projective cover $P_0\rightarrow M$ and an injective envelope $M\rightarrow I_0$ in $\mathop{\emph{mod}}C$.  
 \begin{enumerate}
 \item[$\emph{(a)}$] If $\emph{Hom}_C(\tau_C^{-1}\Omega_C^{-1}P_0,M)=0$, then M is a rigid B-module.
 \item[$\emph{(b)}$] If $\emph{Hom}_C(M,\tau_C\Omega_C I_0)=0$, then M is a rigid B-module.
 \end{enumerate}
 \end{theorem}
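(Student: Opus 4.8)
The plan is to compute $\Ext_B^1(M,M)$ from a projective presentation of $M$ over $B$ and to transport the vanishing of $\Ext_C^1(M,M)$ along the induction functor; part (b) is entirely dual, obtained by replacing the projective cover with the injective envelope, induction with coinduction, sequence (a) of Proposition~\ref{SS, Proposition 3.6} with sequence (b), and Proposition~\ref{SS Prop 4.1}(c) with (d). I describe only (a).

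First I would produce a projective presentation of $M$ as a $B$-module. Let $0 \to \Omega_C M \to P_0 \to M \to 0$ be the short exact sequence given by the projective cover of $M$ over $C$. By Proposition~\ref{SS, Proposition 3.6}(a) the induced module $P_0 \otimes_C B$ is projective over $B$ and surjects onto $M$ through the composite $P_0 \otimes_C B \to P_0 \to M$. Pulling back $\Omega_C M \hookrightarrow P_0$ along the surjection $P_0 \otimes_C B \twoheadrightarrow P_0$ exhibits the kernel $K$ of $P_0\otimes_C B \to M$ as an extension
\[
0 \to P_0 \otimes_C E \to K \to \Omega_C M \to 0 .
\]
Since $E$ is an ideal of $B$ with $E^2 = 0$, one checks $(P_0\otimes_C B)E = P_0\otimes_C E$, so that both $P_0\otimes_C E$ and the quotient $\Omega_C M$ carry the trivial $E$-action and are thus images of $C$-modules under the embedding; moreover $P_0\otimes_C E \cong \tau_C^{-1}\Omega_C^{-1}P_0$ by Proposition~\ref{SS Prop 4.1}(c).

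Next I would apply $\Hom_B(-,M)$ to this sequence. Because $M$ also carries the trivial $E$-action, the term $\Hom_B(P_0\otimes_C E, M)$ equals $\Hom_C(\tau_C^{-1}\Omega_C^{-1}P_0, M)$, which is exactly the expression assumed to vanish in (a); this is the single point at which the hypothesis enters. Consequently $\Hom_B(K,M) \cong \Hom_C(\Omega_C M, M)$, while the tensor--hom adjunction gives $\Hom_B(P_0\otimes_C B, M) \cong \Hom_C(P_0, M)$. I would then verify that these two identifications form a commutative square with the restriction maps $\Hom_B(P_0\otimes_C B, M) \to \Hom_B(K,M)$ and $\Hom_C(P_0, M) \to \Hom_C(\Omega_C M, M)$; this rests on the observation that every $B$-map out of $P_0\otimes_C B$ annihilates $(P_0\otimes_C B)E = P_0\otimes_C E$ and therefore descends through the quotient in the way the square requires.

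Finally, $\Ext_B^1(M,M)$ is the cokernel of the upper restriction map and $\Ext_C^1(M,M)$ the cokernel of the lower one. Since $M$ is a rigid $C$-module, the lower map is surjective; commutativity of the square together with the fact that both vertical maps are isomorphisms forces the upper map to be surjective as well, so $\Ext_B^1(M,M) = 0$ and $M$ is rigid over $B$. I expect the main obstacle to be the bookkeeping in the third step: correctly identifying the $B$-syzygy through the extension $0 \to P_0\otimes_C E \to K \to \Omega_C M \to 0$ and confirming that the adjunction and restriction isomorphisms are genuinely compatible, so that surjectivity --- hence rigidity --- transfers from $\mathop{\text{mod}}C$ to $\mathop{\text{mod}}B$.
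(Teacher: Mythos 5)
Your proof is correct: the pullback extension $0\rightarrow P_0\otimes_CE\rightarrow K\rightarrow\Omega_CM\rightarrow0$, the identification $P_0\otimes_CE\cong\tau_C^{-1}\Omega_C^{-1}P_0$ from Proposition \ref{SS Prop 4.1}, and the commutative square comparing the two restriction maps do transfer the surjectivity from $\Hom_C(P_0,M)\rightarrow\Hom_C(\Omega_CM,M)$ to the $B$-level, giving $\Ext_B^1(M,M)=0$. The paper itself only cites this theorem from \cite{Zito} without reproducing the proof, but your route is essentially the cited one: your composite $P_0\otimes_CB\rightarrow P_0\rightarrow M$ is precisely the $B$-projective cover of Lemma \ref{My Projective Cover}, and the remark in the proof of Proposition \ref{Mini 1} (replace $\Hom_C(\tau_C^{-1}\Omega_C^{-1}P_0,M)=0$ by the $\Gen M$ hypothesis to conclude $\Ext_B^1(M,\Gen M)=0$) confirms that the original argument runs through the same Hom-sequence computation you carried out.
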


\subsection{$\tau$-rigid modules} 
 Following $\cite{AIR}$ we state the following definition.
 
 \begin{mydef}  A $C$-module $M$ is $\tau_C$-$\emph{rigid}$ if $\text{Hom}_C(M,\tau_CM)=0$.  A $\tau_C$-rigid module $M$ is $\tau_C$-$\emph{tilting}$ if the number of pairwise, non-isomorphic, indecomposable summands of $M$ equals the number of isomorphism classes of simple $C$-modules.
 \end{mydef}  
 
 It follows from the Auslander-Reiten formulas that any $\tau_C$-rigid module is rigid and the converse holds if the projective dimension is at most 1.  In particular, any partial tilting module is a $\tau_C$-rigid module, and any tilting module is a $\tau_C$-tilting module.  Thus, we can regard $\tau_C$-tilting theory as a generalization of  classic tilting theory. 
 \par
 The following theorem provides a characterization of $\tau_C$-rigid modules.
 \begin{prop}$\emph{\cite[Proposition~5.8]{AS}}$.
 \label{GenM Result}
   For X and Y in $\mathop{\emph{mod}}C$, $\emph{Hom}_C(X,\tau_CY)=0$ if and only if $\emph{Ext}_C^1(Y,\mathop{\emph{Gen}}X)=0$.
 \end{prop}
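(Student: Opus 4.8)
The plan is to deduce both implications from the Auslander--Reiten formula $\Ext_\Lambda^1(Y,Z)\cong D\overline{\Hom}_\Lambda(Z,\tau_\Lambda Y)$, where $\overline{\Hom}$ denotes the quotient of $\Hom$ by those morphisms that factor through an injective module. The forward implication is the easy one. Assuming $\Hom_\Lambda(X,\tau_\Lambda Y)=0$, I would fix an arbitrary $Z\in\Gen X$ together with an epimorphism $g\colon X^d\rightarrow Z$. Given any $h\colon Z\to\tau_\Lambda Y$, the composite $hg\colon X^d\to\tau_\Lambda Y$ lies in $\Hom_\Lambda(X,\tau_\Lambda Y)^d=0$, and since $g$ is an epimorphism this forces $h=0$. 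Hence $\Hom_\Lambda(Z,\tau_\Lambda Y)=0$, so in particular $\overline{\Hom}_\Lambda(Z,\tau_\Lambda Y)=0$, and the Auslander--Reiten formula yields $\Ext_\Lambda^1(Y,Z)=0$. As $Z$ ranges over $\Gen X$, this gives $\Ext_\Lambda^1(Y,\Gen X)=0$.

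For the converse I would argue by contradiction. Assuming $\Ext_\Lambda^1(Y,\Gen X)=0$, suppose there were a nonzero morphism $f\colon X\to\tau_\Lambda Y$, and set $U=\operatorname{im}f$, a nonzero quotient of $X$, so that $U\in\Gen X$. The hypothesis then gives $\Ext_\Lambda^1(Y,U)=0$, which the Auslander--Reiten formula converts into $\overline{\Hom}_\Lambda(U,\tau_\Lambda Y)=0$. In particular the inclusion $\iota\colon U\hookrightarrow\tau_\Lambda Y$ factors through some injective module. Note that it would not suffice to merely plug $X$ itself into $\Gen X$, since that only delivers the stable statement $\overline{\Hom}_\Lambda(X,\tau_\Lambda Y)=0$; passing to the image $U$ is what lets injectivity be exploited below.

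The delicate point---and the step I expect to be the main obstacle---is upgrading this \emph{stable} vanishing to genuine vanishing of $\Hom$, since $\overline{\Hom}=0$ only records that maps factor through injectives. I would handle it using the injective envelope $E(U)$: because $\iota$ is a monomorphism, any factorization of $\iota$ through an injective can be replaced, via injectivity, by a map $p\colon E(U)\to\tau_\Lambda Y$ satisfying $p|_U=\iota$. As $U$ is essential in $E(U)$ and $p|_U=\iota$ is injective, $\ker p$ meets $U$ trivially and hence vanishes, so $p$ is a monomorphism from an injective module and is therefore split. Thus $E(U)$ would be a nonzero injective direct summand of $\tau_\Lambda Y$, contradicting the standard fact that $\tau_\Lambda Y\cong D\operatorname{Tr}Y$ has no nonzero injective summands, as $\operatorname{Tr}Y$ has no projective summands. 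This contradiction gives $\Hom_\Lambda(X,\tau_\Lambda Y)=0$ and finishes the argument. Beyond the material in the excerpt, the only inputs are the Auslander--Reiten formula and this injective-summand-free property of $\tau_\Lambda Y$.
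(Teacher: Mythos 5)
The paper does not actually prove this proposition: it is imported verbatim from Auslander--Smal\o{} \cite[Proposition~5.8]{AS}, so there is no in-paper argument to compare against. Judged on its own, your proof is correct and complete, and it is essentially the standard modern argument for this fact. The forward direction is fine: an epimorphism $g\colon X^d\to Z$ makes $\Hom_\Lambda(-,\tau_\Lambda Y)$ carry it to a monomorphism, so $\Hom_\Lambda(X,\tau_\Lambda Y)=0$ forces $\Hom_\Lambda(Z,\tau_\Lambda Y)=0$, and the Auslander--Reiten formula $\Ext_\Lambda^1(Y,Z)\cong D\overline{\Hom}_\Lambda(Z,\tau_\Lambda Y)$ finishes it. You correctly identified the one genuinely delicate point in the converse: the AR formula only kills the \emph{stable} $\overline{\Hom}$, and your device for upgrading this is sound. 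Passing from a nonzero $f\colon X\to\tau_\Lambda Y$ to its image $U\in\Gen X$ makes the inclusion $\iota\colon U\hookrightarrow\tau_\Lambda Y$ available; a factorization $\iota=ba$ through an injective $I$ has $a$ monic, so $a$ extends along the essential embedding $U\subseteq E(U)$ to give $p\colon E(U)\to\tau_\Lambda Y$ with $p|_U=\iota$, and essentiality plus $\ker p\cap U=0$ shows $p$ is a split monomorphism. The contradiction with the standard fact that $\tau_\Lambda Y\cong D\operatorname{Tr}Y$ has no nonzero injective summands (since $\operatorname{Tr}Y$ has no projective summands) is exactly right. Your remark that plugging in $X$ itself would only yield the useless stable vanishing is well taken --- that is precisely why the image $U$, as a \emph{submodule} of $\tau_\Lambda Y$ lying in $\Gen X$, is the right object to feed into the hypothesis.
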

The following observations are useful.
 \begin{prop}$\emph{\cite[Proposition~2.4]{AIR}}$.  
 \label{Useful Observations}
 Let X be in $\mathop{\emph{mod}}C$ with a minimal projective presentation $P_1\overset{d_1}{\rightarrow} P_0\overset{d_0}{\rightarrow} X\rightarrow 0$.
 \begin{enumerate}
 \item[$\emph{(a)}$] For Y in $\mathop{\emph{mod}}C$, we have an exact sequence 
 \[
 \hspace{-.9cm}0\rightarrow\emph{Hom}_C(Y,\tau_CX)\rightarrow D\emph{Hom}_C(P_1,Y)\overset{D(d_1,Y)}{\longrightarrow}D\emph{Hom}_C(P_0,Y)\overset{D(d_0,Y)}{\longrightarrow}D\emph{Hom}_C(X,Y)\rightarrow 0.
 \]
 \item[$\emph{(b)}$] $\emph{Hom}_C(Y,\tau_CX)=0$ if and only if the morphism $\emph{Hom}_C(P_0,Y)\overset{(d_1,Y)}{\longrightarrow}\emph{Hom}_C(P_1,Y)$ is surjective.
 \item[$\emph{(c)}$] X is $\tau_C$-rigid if and only if the morphism $\emph{Hom}_C(P_0,X)\overset{(d_1,X)}{\longrightarrow}\emph{Hom}_C(P_1,X)$ is surjective.
 
 \end{enumerate}
  \end{prop}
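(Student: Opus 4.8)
The plan is to reduce $\tau_B$-rigidity to an $\Ext$-vanishing condition over $B$ and then transport that condition back to $C$ using the defining short exact sequence of the relation extension. First I would apply Proposition \ref{GenM Result} with $\Lambda=B$ and $X=Y=M$: this shows $M$ is $\tau_B$-rigid if and only if $\Ext_B^1(M,\Gen_B M)=0$, where $\Gen_B M$ denotes generation in $\mathop{\text{mod}}B$. The first useful observation is that $\Gen_B M=\Gen M$: any $B$-module quotient of $M^d$ is annihilated by $E$ on the right (since $E$ annihilates $M$), hence is a $C$-module, and every $C$-linear surjection $M^d\rightarrow N$ is automatically $B$-linear. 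So the task becomes to prove that $\Ext_B^1(M,N)=0$ for all $N\in\Gen M$ if and only if $\Hom_C(\tau_C^{-1}\Omega_C^{-1}M,N)=0$ for all such $N$; because the comparison I set up below is natural in $N$, both directions will follow simultaneously from a single isomorphism.

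The heart of the argument is a change-of-rings computation of $\Ext_B^1(M,N)$ for $N\in\Gen M$. I would invoke the short exact sequence of $B$-modules from Proposition \ref{SS, Proposition 3.6}(a),
\[
0\rightarrow M\otimes_C E\rightarrow M\otimes_C B\rightarrow M\rightarrow 0,
\]
and rewrite $M\otimes_C E\cong\tau_C^{-1}\Omega_C^{-1}M$ via Proposition \ref{SS Prop 4.1}(c). Applying $\Hom_B(-,N)$ produces a long exact sequence. Two identifications drive it: the tensor–hom adjunction gives $\Hom_B(M\otimes_C B,N)\cong\Hom_C(M,N)$, and since $E$ annihilates both $M$ and $M\otimes_C E$ (as $E^2=0$ in $B$), restriction gives $\Hom_B(M,N)=\Hom_C(M,N)$ and $\Hom_B(\tau_C^{-1}\Omega_C^{-1}M,N)=\Hom_C(\tau_C^{-1}\Omega_C^{-1}M,N)$. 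The triangle identity for the adjunction shows the counit $M\otimes_C B\rightarrow M$ induces the identity on these $\Hom$ groups, so the connecting homomorphism embeds $\Hom_C(\tau_C^{-1}\Omega_C^{-1}M,N)$ into $\Ext_B^1(M,N)$.

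It then remains to see that this embedding is onto, i.e. that the induced-module term $\Ext_B^1(M\otimes_C B,N)$ contributes nothing. This is where the hypotheses are spent. Because $M$ is partial tilting and $N\in\Gen M$, one has $\Ext_C^1(M,N)=0$: resolving $N$ as a quotient $0\rightarrow L\rightarrow M^d\rightarrow N\rightarrow 0$, the term $\Ext_C^1(M,M^d)$ vanishes by rigidity and $\Ext_C^2(M,L)$ vanishes since $\pd_C M\le 1$. The remaining hypothesis $\pd_C\tau_C M\le 1$ is used to control the discrepancy between $\Ext_B^1(M\otimes_C B,N)$ and $\Ext_C^1(M,N)$ caused by the failure of $-\otimes_C B$ to be exact, namely the $\text{Tor}_1^C(M,E)$ correction. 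Granting this, one obtains a natural isomorphism $\Ext_B^1(M,N)\cong\Hom_C(\tau_C^{-1}\Omega_C^{-1}M,N)$, and letting $N$ range over $\Gen M$ yields exactly the claimed equivalence. As a consistency check, Corollary \ref{corollary homological} gives $\Hom_C(\tau_C^{-1}\Omega_C^{-1}M,M)=0$ for free, matching the necessary condition $\Ext_B^1(M,M)=0$.

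The main obstacle is precisely this final comparison: since $E$ is not in general projective as a $C$-module, induction $-\otimes_C B$ is not exact, so $M\otimes_C B$ need not be $B$-projective and $\Ext_B^1(M\otimes_C B,N)$ does not vanish formally. Making the $B$-versus-$C$ comparison of extension groups precise — isolating the $\text{Tor}_1^C(M,E)$ term and showing that $\pd_C\tau_C M\le 1$ kills its effect on modules $N\in\Gen M$ — is the technical crux. I would attack it either by splicing the four-term exact sequence obtained from applying $-\otimes_C B$ to a minimal $C$-projective presentation $0\rightarrow P_1\rightarrow P_0\rightarrow M\rightarrow 0$, or by building the minimal $B$-projective presentation of $M$ from $P_0\otimes_C B$ and analyzing its syzygy $K$, whose extra top beyond $\text{top}_C P_1$ (coming from $P_0\otimes_C E\cong\tau_C^{-1}\Omega_C^{-1}P_0$) is what produces the nontrivial criterion, and then applying Proposition \ref{Useful Observations}.
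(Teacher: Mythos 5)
Your proposal does not address the statement at hand. Proposition \ref{Useful Observations} is a general fact about an arbitrary finite dimensional algebra $\Lambda$, an arbitrary module $X$ with minimal projective presentation $P_1\xrightarrow{d_1}P_0\xrightarrow{d_0}X\rightarrow 0$, and an arbitrary test module $Y$; it makes no mention of the tilted algebra $C$, the relation extension $B=C\ltimes E$, partial tilting modules, or $\Gen M$. What you have sketched instead is an attack on Theorem \ref{Partial Tilt Result} (the criterion for a partial tilting $C$-module with $\pd_C\tau_CM\leq 1$ to be $\tau_B$-rigid), a different result entirely. In the paper, Proposition \ref{Useful Observations} is quoted from Adachi--Iyama--Reiten \cite[Proposition~2.4]{AIR} without proof; a correct argument for it runs through the transpose and the Nakayama functor $\nu=D\Hom_\Lambda(-,\Lambda)$: dualizing the presentation gives the exact sequence defining $\mathrm{Tr}\,X$, whence $0\rightarrow\tau_\Lambda X\rightarrow\nu P_1\rightarrow\nu P_0$, and applying $\Hom_\Lambda(Y,-)$ together with the natural isomorphism $\Hom_\Lambda(Y,\nu P)\cong D\Hom_\Lambda(P,Y)$ for $P$ projective yields part (a); parts (b) and (c) then follow from (a) by exactness and duality. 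None of these ingredients appears in your proposal.

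Even judged as an attempt at the theorem it actually resembles, the argument is incomplete at exactly the point you flag yourself: you never establish the comparison isomorphism $\Ext_B^1(M,N)\cong\Hom_C(\tau_C^{-1}\Omega_C^{-1}M,N)$ for $N\in\Gen M$, and the proposed route through a $\mathrm{Tor}_1^C(M,E)$ correction is not carried out. Note also that such a natural isomorphism is strictly stronger than what the theorem asserts, and the paper never proves it; instead, the forward implication there rests on Proposition \ref{Induced} (the induced module $M\otimes_CB$ is $\tau_B$-rigid when $\pd_C\tau_CM\leq 1$), and the converse uses the approximation machinery of Lemma \ref{Butt Hurt} together with Lemma \ref{AR submodule} and the Ext-projectives $P(\Gen M)$ --- tools for which your sketch has no substitute. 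So both with respect to the stated proposition and with respect to the theorem you implicitly targeted, there is a genuine gap.
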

 The following lemma is very useful in applications.  We need several preliminary definitions.  Let $U$ be a $C$-module.  We define 
 \[
 ^{\perp}(\tau_CU)=\{X\in\mathop{\text{mod}}C~|~\text{Hom}_C(X,\tau_CU)=0\}.  
 \]
 Also, we say a module $X\in\mathop{\text{Gen}}U$ is $\it{Ext}$-$\it{projective}$ if $\text{Ext}_C^1(X,\Gen U)=0$.  We denote by $P(\mathop{\text{Gen}}U)$ the direct sum of one copy of each indecomposable $\text{Ext}$-projective module in $\mathop{\text{Gen}}U$ up to isomorphism.  We say a morphism $f\!:A\rightarrow B$ is a $\it{left}$ $\mathop{\text{Gen}}M$-$\it{approximation}$ if $B\in\mathop{\text{Gen}}M$ and, whenever $g\!:A\rightarrow X$ is a morphism with $X\in\mathop{\text{Gen}}M$, there is some $h\!:B\rightarrow X$ such that $h\circ f=g$.  Moreover, it is called $\it{minimal}$ if any map $j\!:A\rightarrow A$ satisfying  $f\circ j=f$ is an isomorphism.
 
 \begin{lemma}$\emph{\cite[Lemma~2.20]{AIR}}$.  
 \label{Butt Hurt}
 Let T be a $\tau_C$-rigid module.  If U is a $\tau_C$-rigid module satisfying $^{\perp}(\tau_CT)\subseteq~^{\perp}(\tau_CU)$, then there is an exact sequence
 \[
 U\xrightarrow{f}T'\rightarrow A\rightarrow 0
 \]
 satisfying the following conditions.
 \begin{itemize}
 \item f is a minimal left $\mathop{\emph{Gen}}T$-approximation of U.
 \item $T'$ is in $\add T$, A is in $\add P(\mathop{\emph{Gen}}T)$, and $\add T'\cap\add A=0$.
 \end{itemize}
 \end{lemma}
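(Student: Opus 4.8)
The plan is to realize the required sequence as the cokernel sequence of a minimal left $\Gen T$-approximation of $U$, and to read the two summand conditions off the torsion-theoretic structure of $\Gen T$. I would organize the argument in four stages.

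First I would record the consequences of $\tau_{\Lambda}$-rigidity. Applying the criterion of Proposition \ref{GenM Result} with $X=Y=T$ shows $\Ext_{\Lambda}^1(T,\Gen T)=0$, so $T$ is $\Ext$-projective in $\Gen T$; in particular $\add T\subseteq\add P(\Gen T)$, and a routine lifting argument using this $\Ext$-projectivity shows $\Gen T$ is closed under extensions, hence is a torsion class. For the hypothesis, applying $\Hom_{\Lambda}(-,\tau_{\Lambda}T)$ to an epimorphism $T^{d}\twoheadrightarrow Y$ gives $\Gen T\subseteq{}^{\perp}(\tau_{\Lambda}T)$; combined with ${}^{\perp}(\tau_{\Lambda}T)\subseteq{}^{\perp}(\tau_{\Lambda}U)$ this yields $\Gen T\subseteq{}^{\perp}(\tau_{\Lambda}U)$, so in particular $\Hom_{\Lambda}(T,\tau_{\Lambda}U)=0$. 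Feeding this back into Proposition \ref{GenM Result} with $X=T$ and $Y=U$ produces the key vanishing $\Ext_{\Lambda}^1(U,\Gen T)=0$.

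Second I would build the sequence. Since $T$ is a module, $\Gen T$ is a functorially finite torsion class, so $U$ admits a minimal left $\Gen T$-approximation $f\colon U\to T'$; setting $C=\coker f$ gives the exact sequence $U\xrightarrow{f}T'\to C\to 0$, and $C\in\Gen T$ because it is a quotient of $T'\in\Gen T$. Because $\Gen T$ is closed under extensions and $f$ is a minimal left approximation, the dual form of Wakamatsu's lemma gives $\Ext_{\Lambda}^1(C,\Gen T)=0$; that is, $C$ is $\Ext$-projective in $\Gen T$, so $C\in\add P(\Gen T)$. This establishes the membership claimed for the cokernel.

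Finally I would use the key vanishing to pin down the remaining summand conditions, and this is where I expect the real work to be. The point is to separate the two kinds of indecomposable $\Ext$-projectives of $\Gen T$: those lying in $\add T$ and the genuinely new ones in $\add P(\Gen T)\setminus\add T$. I expect $\Ext_{\Lambda}^1(U,\Gen T)=0$ to be exactly the input that forces the target of the approximation into $\add T$ rather than the larger $\add P(\Gen T)$: it prevents the new $\Ext$-projectives from being needed to approximate $U$ and pushes them entirely into $C$, while left-minimality of $f$ rules out any common summand and so yields $\add T'\cap\add C=0$. The main obstacle is precisely this separation: controlling the gap between $\add T$ and $\Gen T$ (equivalently between $\add T$ and $\add P(\Gen T)$) requires the fine Auslander--Smal\o{} description of the $\Ext$-projectives of the torsion class $\Gen T$ together with the left-minimality of $f$, rather than any formal manipulation of the approximation alone.
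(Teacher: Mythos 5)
A preliminary remark: the paper itself contains no proof of this statement --- it is quoted from \cite[Lemma~2.20]{AIR} --- so your proposal can only be measured against what a complete proof actually requires. Your first two stages are essentially sound: Proposition \ref{GenM Result} with $X=Y=T$ gives $\Ext_{\Lambda}^1(T,\Gen T)=0$ and extension-closure of $\Gen T$; covariant finiteness of $\Gen T$ (Smal\o) yields the minimal left $\Gen T$-approximation $f\colon U\rightarrow T'$; and Wakamatsu's lemma, applied to the inclusion of the image of $f$ into $T'$ (a point you should make explicit, since $f$ need not be injective), gives $\Ext_{\Lambda}^1(C,\Gen T)=0$, hence $C\in\add P(\Gen T)$.

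The genuine gap is the step $T'\in\add T$, and the route you propose for it provably cannot work. The only consequence of the hypothesis that you ever record and use is $\Gen T\subseteq{}^{\perp}(\tau_{\Lambda}U)$, equivalently (by Proposition \ref{GenM Result}) the vanishing $\Ext_{\Lambda}^1(U,\Gen T)=0$; this is strictly weaker than $^{\perp}(\tau_{\Lambda}T)\subseteq{}^{\perp}(\tau_{\Lambda}U)$ and is insufficient for the separation you need. Concretely, let $\Lambda=kQ$ with $Q\colon 1\rightarrow 2$, $T=P_1$ and $U=S_1$. Both are $\tau_{\Lambda}$-rigid, $\Gen T=\add(P_1\oplus S_1)$, $P(\Gen T)=P_1\oplus S_1$, and $\Ext_{\Lambda}^1(S_1,\Gen T)=0$, so $\Gen T\subseteq{}^{\perp}(\tau_{\Lambda}S_1)={}^{\perp}(P_2)$; yet the minimal left $\Gen T$-approximation of $U$ is the identity $1_{S_1}$, whose target $S_1$ is not in $\add T=\add P_1$. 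The lemma is not contradicted, because its actual hypothesis fails here: $^{\perp}(\tau_{\Lambda}P_1)=\mathop{\text{mod}}\Lambda\not\subseteq{}^{\perp}(P_2)$. So $\Ext_{\Lambda}^1(U,\Gen T)=0$ does not ``force the target into $\add T$''; the full inclusion $^{\perp}(\tau_{\Lambda}T)\subseteq{}^{\perp}(\tau_{\Lambda}U)$ must be used a second time, in the separation step itself, which is exactly where the Adachi--Iyama--Reiten proof does its real work with the Bongartz-style analysis of Ext-projectives over the torsion class $^{\perp}(\tau_{\Lambda}T)$. (Your weaker hypothesis does suffice precisely when $T$ is $\tau_{\Lambda}$-tilting, so that $\add T=\add P(\Gen T)$ and there are no ``new'' Ext-projectives --- but that is exactly the special case recorded as Lemma \ref{Butt Hurt 2}, not the present lemma.) The disjointness $\add T'\cap\add C=0$ is, as you say, a minimality argument, but even it requires a short matrix-splitting computation rather than being automatic.
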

  We will also need the following special cases of Lemma $\ref{Butt Hurt}$.  The first deals with the case $A=0$.
\begin{lemma}$\emph{\cite[Lemma~2.21]{AIR}}$
 \label{extra}
 Assume $A=0$.  Then $f:U\rightarrow T'$ induces an isomorphism $U/\langle e\rangle U\cong T'$ for a maximal idempotent $e$ of $C$ satisfying $eT=0$.  In particular, if $T$ is sincere, then $U\cong T'$.
 \end{lemma}
The second deals with the case $T$ is $\tau_C$-tilting.
 \begin{lemma}$\emph{\cite[Proposition~2.23]{AIR}}$.  
 \label{Butt Hurt 2}
 Let T be a $\tau_C$-tilting module.  Assume that U is a $\tau_C$-rigid module such that $\mathop{\emph{Gen}}T\subseteq~^{\perp}(\tau_CU)$.  Then there exists an exact sequence 
 \[
 U\overset{f}{\rightarrow}T^0\rightarrow T^1\rightarrow 0 
 \]
 such that 
 \begin{itemize}
 \item f is a minimal left $\mathop{\emph{Gen}}T$-approximation of U. 
 \item T$^0$ and T$^1$ are in $\add T$ and satisfy $\add T^0\cap \add T^1=0$.
 \end{itemize}
 \end{lemma}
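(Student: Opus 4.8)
The plan is to obtain Lemma \ref{Butt Hurt 2} as the special case of Lemma \ref{Butt Hurt} in which the $\tau_\Lambda$-rigid module playing the role of the distinguished module is in fact $\tau_\Lambda$-tilting. Two facts from $\tau$-tilting theory make this specialization work, and both are genuine features of $\tau_\Lambda$-tilting (as opposed to merely $\tau_\Lambda$-rigid) modules: first, that the torsion class ${}^{\perp}(\tau_\Lambda T)$ coincides with $\Gen T$; and second, that $T$ is the Ext-projective generator of that class, i.e.\ $\add P(\Gen T)=\add T$. Granting these, the argument is short.

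First I would record that $\Gen T\subseteq {}^{\perp}(\tau_\Lambda T)$ holds for every $\tau_\Lambda$-rigid module $T$, using only Proposition \ref{GenM Result}. Applying that proposition with both arguments equal to $T$ shows that $\tau_\Lambda$-rigidity of $T$, namely $\Hom_\Lambda(T,\tau_\Lambda T)=0$, is equivalent to $\Ext_\Lambda^1(T,\Gen T)=0$. Now if $X\in\Gen T$ then $\Gen X\subseteq\Gen T$, so $\Ext_\Lambda^1(T,\Gen X)=0$, and Proposition \ref{GenM Result} applied to the pair $(X,T)$ gives $\Hom_\Lambda(X,\tau_\Lambda T)=0$, that is, $X\in{}^{\perp}(\tau_\Lambda T)$. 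The reverse inclusion ${}^{\perp}(\tau_\Lambda T)\subseteq\Gen T$ is exactly where the $\tau_\Lambda$-tilting hypothesis is indispensable: it is the assertion that the functorially finite torsion class ${}^{\perp}(\tau_\Lambda T)$ is generated by $T$, which I would cite from the support $\tau$-tilting correspondence of \cite{AIR}. Combining the two inclusions yields $\Gen T={}^{\perp}(\tau_\Lambda T)$.

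With this identity the hypothesis $\Gen T\subseteq{}^{\perp}(\tau_\Lambda U)$ becomes ${}^{\perp}(\tau_\Lambda T)\subseteq{}^{\perp}(\tau_\Lambda U)$, which is precisely the hypothesis of Lemma \ref{Butt Hurt}. Applying Lemma \ref{Butt Hurt} to the $\tau_\Lambda$-rigid pair $(T,U)$ produces an exact sequence $U\xrightarrow{f}T'\to C\to 0$ in which $f$ is a minimal left $\Gen T$-approximation of $U$, $T'\in\add T$, $C\in\add P(\Gen T)$, and $\add T'\cap\add C=0$. It then remains only to upgrade $C\in\add P(\Gen T)$ to $C\in\add T$. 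This is the second $\tau_\Lambda$-tilting input: each indecomposable summand of $T$ is Ext-projective in $\Gen T$ (immediate from $\Ext_\Lambda^1(T,\Gen T)=0$ established above), and when $T$ is $\tau_\Lambda$-tilting the number of indecomposable Ext-projectives in $\Gen T$ equals the number of non-isomorphic indecomposable summands of $T$, whence $\add P(\Gen T)=\add T$. Setting $T^0=T'$ and $T^1=C$ gives the desired sequence, with $T^0,T^1\in\add T$ and $\add T^0\cap\add T^1=0$.

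The main obstacle is precisely the two $\tau_\Lambda$-tilting-specific inputs, ${}^{\perp}(\tau_\Lambda T)\subseteq\Gen T$ and $\add P(\Gen T)=\add T$; neither is formal, and both encode the maximality of a $\tau_\Lambda$-tilting module among $\tau_\Lambda$-rigid modules. A fully self-contained treatment would have to reprove the relevant half of the bijection between support $\tau$-tilting modules and functorially finite torsion classes established in \cite{AIR}; for present purposes I would instead cite those results directly, since everything else reduces cleanly to Lemma \ref{Butt Hurt}.
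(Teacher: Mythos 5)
Your argument is correct, and there is nothing in the paper to compare it against: the paper does not prove this lemma but cites it verbatim as \cite[Proposition~2.23]{AIR}, just as it cites Lemma~\ref{Butt Hurt} as \cite[Lemma~2.20]{AIR}. Your reduction is in fact the standard (and essentially AIR's own) derivation of the proposition from the lemma: the verification of $\Gen T\subseteq{}^{\perp}(\tau_{\Lambda}T)$ via Proposition~\ref{GenM Result} is sound (using that $X\in\Gen T$ implies $\Gen X\subseteq\Gen T$), and the two genuinely $\tau_{\Lambda}$-tilting inputs you isolate --- ${}^{\perp}(\tau_{\Lambda}T)=\Gen T$ and $\add P(\Gen T)=\add T$ --- are exactly the content of the support $\tau$-tilting correspondence in \cite{AIR}, which you are entitled to cite here since the paper itself treats the whole statement as a black box. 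With those in hand, the hypothesis $\Gen T\subseteq{}^{\perp}(\tau_{\Lambda}U)$ becomes ${}^{\perp}(\tau_{\Lambda}T)\subseteq{}^{\perp}(\tau_{\Lambda}U)$ and Lemma~\ref{Butt Hurt} delivers the sequence with $C\in\add P(\Gen T)=\add T$, so setting $T^0=T'$ and $T^1=C$ finishes the proof exactly as you say.
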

 
 The following definition was introduced in $\cite{AIR}$.
  \begin{mydef}  A $C$-module $M$ is $\emph{support}~\tau_C$-$\emph{tilting}$ if there exists an idempotent $e$ of $C$ such that $M$ is a $\tau$-tilting $(C/\langle e\rangle)$-module.
 \end{mydef} 
It was shown in $\cite{AIR}$ that $\tau$-tilting modules are sincere.
\begin{prop}$\emph{\cite[Proposition~2.2]{AIR}}$ 
 \label{Sincere}
 $\tau$-tilting modules are precisely sincere support $\tau$-tilting modules.
 \end{prop}

 We now return to the situation where the algebra $B$ is a split extension of the algebra $C$ by a nilpotent bimodule $E$.  The induction functor can be used to derive information about the Auslander-Reiten translation of a $C$-module $M$ inside the module category of $B$.  The next theorem tells us exactly when the Auslander-Reiten translation remains the same, i.e., $\tau_CM\cong\tau_BM$ as $B$-modules.
 \begin{theorem}$\emph{\cite[Theorem~2.1]{AZ}}$.
 \label{AZ Main}
   Let M be an indecomposable non-projective C-module.  The following are equivalent:
 \begin{enumerate}
 \item[$\emph{(a)}$] The almost split sequences ending with M in $\mathop{\emph{mod}}C$ and $\mathop{\emph{mod}}B$ coincide.
 \item[$\emph{(b)}$] $\tau_CM\cong\tau_BM$.
 \item[$\emph{(c)}$] $\emph{Hom}_C(E,\tau_CM)=0$ and $M\otimes_CE=0$.
 \end{enumerate}
 \end{theorem}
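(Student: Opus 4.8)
The plan is to prove the cycle (a)$\Rightarrow$(b)$\Rightarrow$(c)$\Rightarrow$(a). Throughout I use the restriction functor $\operatorname{res}\colon\operatorname{mod}B\to\operatorname{mod}C$ along $\pi$, which is exact, together with the fact that the embedding $\operatorname{mod}C\hookrightarrow\operatorname{mod}B$ is full and faithful: since $ME=0$ for any $C$-module $M$, one has $\Hom_B(X,Y)=\Hom_C(X,Y)$ and $\operatorname{rad}_BX=\operatorname{rad}_CX$ for $C$-modules $X,Y$. Hence $M$ remains indecomposable and non-projective over $B$, so $\tau_BM$ and the almost split sequence ending at $M$ in $\operatorname{mod}B$ are defined. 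First I would reformulate (c) using Proposition \ref{SS, Proposition 3.6}. Part (a) of that proposition gives that $M\otimes_CE=0$ is equivalent to $M\cong M\otimes_CB$, i.e. $M$ is \emph{induced}. For the second condition, since $D(E\otimes_CD\tau_CM)\cong\Hom_C(E,\tau_CM)$ by tensor--hom duality, part (b) shows that $\Hom_C(E,\tau_CM)=0$ is equivalent to $\tau_CM\cong D(B\otimes_CD\tau_CM)$, i.e. $\tau_CM$ is \emph{coinduced}. So (c) asserts precisely that $M$ is induced and $\tau_CM$ is coinduced.

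\textbf{(a)$\Rightarrow$(b) and (c)$\Rightarrow$(a).} Write the almost split sequence in $\operatorname{mod}C$ as $\eta\colon 0\to\tau_CM\xrightarrow{f}N\xrightarrow{g}M\to0$. Implication (a)$\Rightarrow$(b) is immediate: the left-hand terms of the two coinciding sequences are by definition $\tau_CM$ and $\tau_BM$, so $\tau_CM\cong\tau_BM$. For (c)$\Rightarrow$(a) I would show that $\eta$, read in $\operatorname{mod}B$, is still almost split; it is non-split with indecomposable ends, and $g$ stays right minimal (a $B$-endomorphism of $N$ fixing $g$ is a $C$-endomorphism), so it suffices to prove that $g$ is right almost split in $\operatorname{mod}B$. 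Let $h\colon W\to M$ be a $B$-map that is not a retraction. Because $M$ is induced, $\Hom_B(M,W)=\Hom_C(M,W)$, so any $C$-section of $\operatorname{res}h$ would automatically be $B$-linear; hence $\operatorname{res}h$ is not a retraction in $\operatorname{mod}C$ and factors as $\operatorname{res}h=g\,t$ for some $C$-map $t\colon W\to N$. The failure of $t$ to be $B$-linear is recorded by $\delta\colon W\otimes_CE\to N$, $w\otimes e\mapsto t(we)-t(w)e$, which is well defined and $C$-linear; a direct check using that $h$ is $B$-linear and $ME=0$ gives $g\delta=0$, so $\delta$ lands in $\operatorname{im}f\cong\tau_CM$. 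Each composite $E\xrightarrow{w\otimes-}W\otimes_CE\xrightarrow{\delta}\tau_CM$ is a $C$-map, hence zero since $\Hom_C(E,\tau_CM)=0$; as the elements $w\otimes e$ generate $W\otimes_CE$ this forces $\delta=0$, so $t$ is $B$-linear with $g\,t=h$. Thus $g$ is right almost split in $\operatorname{mod}B$, $\eta$ is the almost split sequence ending at $M$ in $\operatorname{mod}B$, and (a) holds.

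\textbf{(b)$\Rightarrow$(c): the main obstacle.} The remaining and hardest implication is (b)$\Rightarrow$(c), where I must recover the vanishing conditions from a bare isomorphism $\tau_BM\cong\tau_CM$. Here I would pass through minimal projective presentations. Starting from a minimal $C$-projective presentation $P_1\to P_0\to M\to0$, the radical computation above shows that $P_0\otimes_CB$ is the $B$-projective cover of $M$ and that, as $C$-modules, $\operatorname{res}\Omega_BM\cong\Omega_CM\oplus(P_0\otimes_CE)$, with a surjection $P_0\otimes_CE\twoheadrightarrow M\otimes_CE$. Feeding this into the transpose (Nakayama) description of $\tau_B$ should express $\tau_BM$ relative to $\tau_CM$ with correction terms governed by $M\otimes_CE$ and $\Hom_C(E,\tau_CM)$, so that the hypothesis $\tau_BM\cong\tau_CM$ forces both corrections to vanish; I expect the duality $B^{\mathrm{op}}\cong C^{\mathrm{op}}\ltimes DE$, which interchanges the two conditions of (c), to let me treat $M\otimes_CE=0$ and $\Hom_C(E,\tau_CM)=0$ symmetrically and halve the work. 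The hard part will be exactly this bookkeeping: because $E$ acts nontrivially, $\operatorname{rad}_B\Omega_BM$ is strictly larger than $\operatorname{rad}_C$ of its restriction, so the $B$-projective cover of $\Omega_BM$ is \emph{not} simply the induction of the $C$-projective cover of $\Omega_CM$, and controlling this discrepancy precisely enough to read off (c) from the isomorphism $\tau_BM\cong\tau_CM$ is the delicate technical point of the proof.
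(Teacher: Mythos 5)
Your implications (a)$\Rightarrow$(b) and (c)$\Rightarrow$(a) are correct and complete, and the latter is a genuinely nice direct argument: the adjunction $\Hom_B(M\otimes_CB,W)\cong\Hom_C(M,W)$ (valid since $M\otimes_CE=0$ makes $M$ induced) correctly shows $\operatorname{res}h$ is not a retraction over $C$, your defect map $\delta(w\otimes e)=t(we)-t(w)e$ is well defined and $C$-balanced, $g\delta=0$ follows from $B$-linearity of $h$ together with $ME=0$, and $\Hom_C(E,\tau_CM)=0$ kills $\delta$ since the maps $e\mapsto w\otimes e$ are right $C$-linear and their images generate $W\otimes_CE$. (For the record, the paper itself gives no proof of this statement; it is quoted from [AZ], where the substance is precisely this ``full embedding'' of almost split sequences.)

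The genuine gap is (b)$\Rightarrow$(c): you announce a strategy (transpose bookkeeping over minimal projective presentations) but never carry it out, and you yourself flag the failure of $\operatorname{rad}_B\Omega_BM$ to restrict well as ``the delicate technical point''; as written you have proved only (c)$\Rightarrow$(a)$\Rightarrow$(b), not the equivalence. Moreover, the duality you hope will halve the work is misstated: $B^{\mathrm{op}}$ is $C^{\mathrm{op}}\ltimes E$ with the two sides of $E$ interchanged, not $C^{\mathrm{op}}\ltimes DE$, and dualizing (b) yields the $\tau^{-1}$-version of (b) for $DM$, which is not the same hypothesis, so the two halves of (c) cannot simply be exchanged. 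The missing implication can in fact be closed with tools already quoted in this paper, avoiding your bookkeeping entirely: by Lemma \ref{AR submodule}, $\tau_B(M\otimes_CB)$ is a submodule of $\tau_BM$, and by Lemma \ref{AM, 2.1} it is isomorphic as a $C$-module to $\tau_CM\oplus\Hom_C(E,\tau_CM)$; if $\tau_BM\cong\tau_CM$, a dimension count forces $\Hom_C(E,\tau_CM)=0$. Then $\tau_B(M\otimes_CB)\cong\tau_CM\cong\tau_BM$ is indecomposable, so by Krull--Schmidt exactly one indecomposable summand $X$ of $M\otimes_CB$ is nonprojective, and since $\tau_B$ reflects isomorphisms on nonprojective indecomposables, $X\cong M$ and $M\otimes_CB\cong M\oplus P$ with $P$ projective; but $E\subseteq\operatorname{rad}B$ gives $\operatorname{top}(M\otimes_CB)\cong\operatorname{top}M$, so comparing tops forces $P=0$, and then Proposition \ref{SS, Proposition 3.6}(a) gives $M\otimes_CE=0$.
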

 Having information about the Auslander-Reiten translation of an induced module is very useful.
 \begin{lemma}$\emph{\cite[Lemma~2.1]{AM}}$.  
 \label{AM, 2.1}
 Let $M$ be a $C$-module.  Then 
 \[
 \tau_B(M\otimes_CB)\cong\emph{Hom}_C(_BB_C,\tau_CM)\cong\tau_CM\oplus\emph{Hom}_C(E,\tau_CM) 
 \]
 where the isomorphisms are isomorphisms of C-modules.
 \end{lemma}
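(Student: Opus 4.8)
The plan is to compute $\tau_B(M\otimes_C B)$ directly from the transpose, pushing a minimal projective presentation of $M$ through the induction functor and then translating everything back to $C$ by adjunction. First I would fix a minimal projective presentation $P_1\xrightarrow{d_1}P_0\to M\to 0$ of $M$ in $\mathop{\text{mod}}C$. Since $-\otimes_C B$ is right exact and carries projective $C$-modules to projective $B$-modules, applying it produces a projective presentation $P_1\otimes_C B\xrightarrow{d_1\otimes 1}P_0\otimes_C B\to M\otimes_C B\to 0$ in $\mathop{\text{mod}}B$, and computing the transpose from it yields
\[
\tau_B(M\otimes_C B)\cong D\coker\bigl(\Hom_B(P_0\otimes_C B,B)\xrightarrow{(d_1\otimes 1)^{*}}\Hom_B(P_1\otimes_C B,B)\bigr).
\]

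Next I would invoke the adjunction between the induction functor $-\otimes_C B$ and restriction, which gives a natural isomorphism $\Hom_B(P\otimes_C B,B)\cong\Hom_C(P,{}_BB_C)$ and therefore intertwines $(d_1\otimes 1)^{*}$ with the map $(d_1,{}_BB_C)$ induced by $d_1$; here the module structure is carried along by the left $B$-action on ${}_BB_C$. This identifies the cokernel above with $\coker\bigl(\Hom_C(P_0,{}_BB_C)\to\Hom_C(P_1,{}_BB_C)\bigr)$. Applying $D$ and comparing with the four-term exact sequence of Proposition \ref{Useful Observations}(a) taken with $X=M$ and $Y={}_BB_C$ finishes the first isomorphism: that sequence exhibits $\Hom_C({}_BB_C,\tau_C M)$ as the kernel of $D(d_1,{}_BB_C)$, and for finite-dimensional modules this kernel is exactly $D\coker\bigl((d_1,{}_BB_C)\bigr)$, so $\tau_B(M\otimes_C B)\cong\Hom_C({}_BB_C,\tau_C M)$. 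For the second isomorphism, the split extension structure $B=C\ltimes E$ gives a decomposition ${}_BB_C\cong C\oplus E$ of right $C$-modules, whence
\[
\Hom_C({}_BB_C,\tau_C M)\cong\Hom_C(C,\tau_C M)\oplus\Hom_C(E,\tau_C M)\cong\tau_C M\oplus\Hom_C(E,\tau_C M).
\]
Throughout I would track the right $C$-module structures coming from the left $B$- and $C$-actions on ${}_BB_C$, so that all isomorphisms are genuinely isomorphisms of $C$-modules, as claimed.

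The main obstacle I anticipate is justifying that the induced presentation $P_1\otimes_C B\to P_0\otimes_C B\to M\otimes_C B\to 0$ is admissible for computing $\tau_B$, that is, essentially minimal, so that $D\operatorname{Tr}_B$ introduces no spurious injective summands. Because $-\otimes_C B$ is only right exact and $E$ need not be flat as a left $C$-module, the sequence $\Omega_C M\otimes_C B\to P_0\otimes_C B\to M\otimes_C B\to 0$ may fail to be left exact, so one cannot simply transport the minimal presentation of $M$ termwise. The key point to verify is that, since $E\subseteq\operatorname{rad}B$ and $B/\operatorname{rad}B\cong C/\operatorname{rad}C$, the induction functor preserves projective covers and hence sends a minimal projective presentation of $M$ to one of $M\otimes_C B$ whose transpose differs from the true one only by summands that do not survive into $\coker$; once this is established the transpose computation above is exact and the stated formula follows.
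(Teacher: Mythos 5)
The paper itself gives no proof of this lemma---it is quoted directly from \cite[Lemma~2.1]{AM}---so the benchmark is the original argument of Assem and Marmaridis, and your proposal essentially reconstructs it: induce a minimal projective presentation, apply $\Hom_B(-,B)$, use the adjunction $\Hom_B(P\otimes_CB,B)\cong\Hom_C(P,{}_BB_C)$, and compare with the four-term exact sequence of Proposition~\ref{Useful Observations}(a) over $C$ with $Y={}_BB_C$; the second isomorphism then falls out of the bimodule decomposition $B=C\oplus E$, exactly as you say. The one obstacle you flag, minimality of the induced presentation, is indeed the crux, and your proposed reason is the correct one; it can be closed in two lines. Since $\operatorname{rad}B=\operatorname{rad}C\oplus E$, for any projective $C$-module $P$ one has $P\otimes_CB\cong P\oplus(P\otimes_CE)$ as $C$-modules with $\operatorname{rad}(P\otimes_CB)=\operatorname{rad}P\oplus(P\otimes_CE)$, and $d_1\otimes 1_B$ acts diagonally as $(d_1,\,d_1\otimes 1_E)$; hence its image lies in $\operatorname{rad}P_0\oplus(P_0\otimes_CE)\subseteq\operatorname{rad}(P_0\otimes_CB)$ and its kernel, namely $\ker d_1\oplus\ker(d_1\otimes 1_E)$, lies in $\operatorname{rad}P_1\oplus(P_1\otimes_CE)\subseteq\operatorname{rad}(P_1\otimes_CB)$, which is precisely the radical criterion for minimality of a projective presentation. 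Two small corrections to your last paragraph: once minimality is established there are no spurious summands ``that do not survive into $\coker$'' to discard---the transpose computed from the induced presentation is the transpose on the nose; and the possible failure of left exactness of $-\otimes_CE$ is a red herring, since only right exactness is needed to get a presentation and minimality is a radical condition, not an exactness condition. With those points tightened, your proof is complete and is, in substance, the proof in the cited source.
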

 Next, we state a result which gives information about $\Hom_B(-,\tau_B(M\otimes_CB))$ and $\Hom_B(M\otimes_CB,-)$.
  
 \begin{lemma}$\emph{\cite[Lemma~1.5]{Zito2}}$
\label{Adjunct}
 Let $M$ be a $C$-module, $M\otimes_CB$ the induced module, and let $X$ be any $B$-module.  Then we have 
\[
\emph{Hom}_B(X,\tau_B(M\otimes_CB))\cong\emph{Hom}_B(X,\emph{Hom}_C(_BB_C,\tau_CM)\cong\emph{Hom}_C(X\otimes_BB_C,\tau_CM)
\]
and
\[
\emph{Hom}_B(M\otimes_CB,X)\cong\emph{Hom}_C(M,\emph{Hom}_B(_CB_B,X)).
\]
\end{lemma}
We note that $-\otimes_BB_C$ and $\text{Hom}_B(_CB_B,-)$ are two expressions for the forgetful functor $\mathop{\text{mod}}B\rightarrow\mathop{\text{mod}}C$.
 
 Deducing information about $\tau_BM$ is generally more difficult but we have an answer in the following special case.
 \begin{lemma}$\emph{\cite[Corollary~1.3]{AZ}}$.  
 \label{AZ corollary}
 Assume $M\otimes_CE=0$, then we have 
 \[
 \tau_BM\cong\tau_CM\oplus \emph{Hom}_C(E,\tau_CM)
 \]
where the isomorphism is an isomorphism of $C$-modules. 
 \end{lemma}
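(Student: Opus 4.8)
The plan is to recognize that the entire statement is a direct specialization of Lemma~\ref{AM, 2.1}, and that the hypothesis $M\otimes_C E=0$ is precisely what is needed to replace the induced module $M\otimes_C B$ by $M$ itself. The point is that Lemma~\ref{AM, 2.1} already computes $\tau_B(M\otimes_C B)$ for an arbitrary $C$-module $M$, so the only genuine task is to identify $M\otimes_C B$ with $M$ under the given hypothesis and then transport the Auslander--Reiten translation across that identification.

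First I would invoke Proposition~\ref{SS, Proposition 3.6}(a). Since $B=C\ltimes E$ is a split extension of $C$ by the nilpotent bimodule $E$, this supplies the short exact sequence of $B$-modules
\[
0\rightarrow M\otimes_C E\rightarrow M\otimes_C B\rightarrow M\rightarrow 0.
\]
Under the hypothesis $M\otimes_C E=0$ the left-hand term vanishes, so the surjection $M\otimes_C B\rightarrow M$ has trivial kernel and the sequence degenerates to an isomorphism $M\otimes_C B\cong M$ of $B$-modules.

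Next, applying $\tau_B$ to this isomorphism of $B$-modules yields $\tau_B M\cong\tau_B(M\otimes_C B)$ as $B$-modules, and hence as $C$-modules after restriction of scalars along the inclusion $C\hookrightarrow B$. It then remains only to evaluate $\tau_B(M\otimes_C B)$, and for this I would quote Lemma~\ref{AM, 2.1}, which gives the $C$-module isomorphism
\[
\tau_B(M\otimes_C B)\cong\tau_C M\oplus\Hom_C(E,\tau_C M).
\]
Composing the two isomorphisms produces the desired $C$-module isomorphism $\tau_B M\cong\tau_C M\oplus\Hom_C(E,\tau_C M)$.

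There is no deep obstacle in this argument: all of the substance is packaged inside Lemma~\ref{AM, 2.1}, whose proof (computing the $\tau_B$-translate of an induced module via the adjunction isomorphism $\tau_B(M\otimes_C B)\cong\Hom_C({}_BB_C,\tau_C M)$ together with the bimodule decomposition ${}_BB_C\cong C\oplus E$) is where the real work lies. The one point requiring care is the distinction between $B$-module and $C$-module isomorphisms: the identification $M\otimes_C B\cong M$ must be read as an isomorphism of $B$-modules, since that is what allows $\tau_B$ to be transported, whereas the conclusion is asserted only as an isomorphism of $C$-modules, exactly matching the output of Lemma~\ref{AM, 2.1}.
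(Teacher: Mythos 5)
Your proposal is correct: since the paper states this lemma only as a citation to \cite[Corollary~1.3]{AZ} with no proof of its own, the relevant comparison is with the standard derivation, and yours is exactly it --- the hypothesis $M\otimes_C E=0$ collapses the sequence of Proposition~\ref{SS, Proposition 3.6}(a) to a $B$-module isomorphism $M\otimes_C B\cong M$, after which Lemma~\ref{AM, 2.1} computes $\tau_B(M\otimes_C B)\cong\tau_C M\oplus\Hom_C(E,\tau_C M)$ as $C$-modules. You also correctly flag the one delicate point, namely that the identification $M\otimes_C B\cong M$ must hold as $B$-modules before $\tau_B$ can be transported, while the final isomorphism is asserted only over $C$, matching the output of Lemma~\ref{AM, 2.1}.
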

 We also have the following important fact which will be used extensively.
 \begin{lemma}$\emph{\cite[Corollary~1.2]{AZ}}$.
 \label{AR submodule}
 $\tau_B(M\otimes_CB)$ is a submodule of $\tau_BM$.
 \end{lemma}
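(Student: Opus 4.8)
The plan is to reduce the whole statement to the single inclusion $\tau_B(M\otimes_C B)\hookrightarrow\tau_B M$ and then recover the claim about $\tau_C M$ from it. Indeed, Lemma~\ref{AM, 2.1} gives an isomorphism of $C$-modules $\tau_B(M\otimes_C B)\cong\tau_C M\oplus\Hom_C(E,\tau_C M)$, so $\tau_C M$ is a direct summand, and in particular a submodule, of $\tau_B(M\otimes_C B)$. Once $\tau_B(M\otimes_C B)$ is realized as a submodule of $\tau_B M$, composing the two inclusions yields $\tau_C M\subseteq\tau_B(M\otimes_C B)\subseteq\tau_B M$ (as $C$-modules), which is exactly the assertion. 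Thus the entire content is the inclusion for the induced module.

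To produce $\tau_B(M\otimes_C B)\hookrightarrow\tau_B M$, I would first observe that $M$ and $M\otimes_C B$ admit a common projective cover over $B$. By Proposition~\ref{SS, Proposition 3.6}(a) there is a short exact sequence of $B$-modules $0\to M\otimes_C E\to M\otimes_C B\to M\to 0$, and since $E$ is a nilpotent ideal we have $M\otimes_C E=(M\otimes_C B)\cdot E\subseteq(M\otimes_C B)\cdot\operatorname{rad}B=\operatorname{rad}(M\otimes_C B)$. Hence $M\otimes_C B$ and $M$ have the same top; because $M\cdot E=0$ this top is the $C$-top of $M$, so the common $B$-projective cover is $P_0\otimes_C B$, where $P_0\to M$ is the projective cover of $M$ over $C$. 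Factoring $P_0\otimes_C B\to M$ through $M\otimes_C B$ then exhibits the first syzygy of $M\otimes_C B$ as a submodule of the first syzygy of $M$, and the snake lemma identifies the cokernel, giving a short exact sequence $0\to\Omega_B(M\otimes_C B)\to\Omega_B M\to M\otimes_C E\to 0$ inside $P_0\otimes_C B$.

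The next step is to pass from this syzygy inclusion to an inclusion of Auslander--Reiten translates. Writing minimal projective presentations $Q_1\to P_0\otimes_C B\to M\to 0$ and $Q_1'\to P_0\otimes_C B\to M\otimes_C B\to 0$ sharing the common cover, the inclusion $\Omega_B(M\otimes_C B)\hookrightarrow\Omega_B M$ lifts to a comparison of these presentations fixing $P_0\otimes_C B$. Applying the Nakayama functor $\nu_B=D\Hom_B(-,B)$ and identifying $\tau_B(-)$ with the kernel of the induced map $\nu_B Q_\bullet\to\nu_B(P_0\otimes_C B)$, the monomorphism of first syzygies should dualize to the sought monomorphism $\tau_B(M\otimes_C B)\hookrightarrow\tau_B M$.

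The hard part is precisely this last step. The transpose and the Nakayama functor are functorial only modulo projectives, so a naive comparison of presentations produces a map on $\tau$'s that is well defined merely up to projective summands, whereas the statement demands an honest submodule inclusion. Controlling this is where the split structure $B=C\ltimes E$ must be used essentially: the nilpotence of $E$ together with $M\cdot E=0$ should guarantee that the presentations above can be chosen minimally and compatibly, so that no spurious projective-injective summands intrude and the induced map on $\tau$ is a genuine monomorphism of (restricted $C$-) modules. Verifying this compatibility, rather than the formal homological algebra, is what I expect to require the most care; one could alternatively try to bypass it by invoking the split-extension machinery behind Theorem~\ref{AZ Main} directly, which already encodes the needed control of minimal almost split data.
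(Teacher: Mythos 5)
This lemma is quoted in the paper from \cite[Corollary~1.2]{AZ} without proof, so there is no internal argument to compare against; your proposal has to stand on its own as a proof, and it does not quite do so. The reduction in your first paragraph and the syzygy comparison in your second are sound: $P_0\otimes_C B$ is indeed a common projective cover of $M$ and $M\otimes_C B$ over $B$ (this is Lemma~\ref{My Projective Cover} together with Proposition~\ref{SS, Proposition 3.6}), and the snake lemma does give $0\to\Omega_B(M\otimes_C B)\to\Omega_B M\to M\otimes_C E\to 0$. The genuine gap is the step you yourself flag as "the hard part": comparing the two minimal presentations $P_1\otimes_C B\to P_0\otimes_C B\to M\otimes_C B\to 0$ and $Q_1\to P_0\otimes_C B\to M\to 0$ only yields, after applying $\nu_B$, \emph{some} induced map $\tau_B(M\otimes_C B)\to\tau_B M$; nothing in the formal setup makes it injective. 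Concretely, what one must prove is that the lift $f\colon P_1\otimes_C B\to Q_1$ of the inclusion $\Omega_B(M\otimes_C B)\hookrightarrow\Omega_B M$ through the projective covers is a \emph{split monomorphism} (equivalently, that the inclusion of syzygies induces an injection on tops); only then does $\Hom_B(f,B)$ give a surjection $\operatorname{Tr}_B M\twoheadrightarrow\operatorname{Tr}_B(M\otimes_C B)$, which dualizes to the desired monomorphism of translates. This splitting is exactly where the structure of the split-by-nilpotent extension enters in [AZ], and your proposal replaces it with "should dualize" and "I expect to require the most care" --- that is the missing proof, not a routine verification. Your suggested escape route, invoking the machinery behind Theorem~\ref{AZ Main}, is also problematic: in [AZ] that theorem comes after, and rests on, precisely the comparison underlying Corollary~1.2, so appealing to it here is circular.

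A second, smaller defect: your chain $\tau_C M\subseteq\tau_B(M\otimes_C B)\subseteq\tau_B M$ is asserted only "as $C$-modules," because the decomposition in Lemma~\ref{AM, 2.1} is an isomorphism of $C$-modules. The paper's applications of this lemma (e.g.\ deducing $^{\perp}(\tau_B M)\subseteq{}^{\perp}(\tau_B(M\otimes_C B))$ in Theorem~\ref{Partial Tilt Result}, where $\Hom_B$-vanishing is transferred along the inclusion) require honest $B$-submodules. This part is repairable: under the identification $\tau_B(M\otimes_C B)\cong\Hom_C({}_BB_C,\tau_C M)$, the subspace of homomorphisms vanishing on $E$ is a $B$-submodule (since $E$ is a two-sided ideal, $(\varphi\cdot b)(e)=\varphi(be)=0$ whenever $\varphi|_E=0$), and it is isomorphic to $\tau_C M$ with $E$ acting by zero, as required. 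You should make that verification explicit rather than passing through a merely $C$-linear direct summand.
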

 Finally, we note the following lemma. 
   \begin{lemma}$\emph{\cite[Lemma~2.1]{AZ2}}$
 \label{My Projective Cover}
 Let $M$ be a $C$-module with $f\!:P_0\rightarrow M$ a projective cover in $\mathop{\emph{mod}}C$.  Suppose $g\!:P_0\otimes_CB\rightarrow P_0$ is a projective cover of $P_0$ in $\mathop{\emph{mod}}B$.  Then $f\circ g\!:P_0\otimes_CB\rightarrow M$ is a projective cover of $M$ in $\mathop{\emph{mod}}B$.
 \end{lemma}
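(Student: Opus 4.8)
The plan is to verify, for the composite $f\circ g\colon P_0\otimes_C B\to M$, the three defining features of a projective cover in $\mathop{\text{mod}}B$: that its source is a projective $B$-module, that it is a $B$-epimorphism, and that it is minimal in the sense of inducing an isomorphism on tops (equivalently, having superfluous kernel). First I would dispose of the two easy points. Since $P_0$ is projective in $\mathop{\text{mod}}C$, it is a direct summand of some $C^n$, so $P_0\otimes_C B$ is a direct summand of $C^n\otimes_C B\cong B^n$ and hence projective in $\mathop{\text{mod}}B$; this is the general fact that the induction functor preserves projectives. For surjectivity, $g$ is onto because it is a projective cover and $f$ is onto because it is a projective cover in $\mathop{\text{mod}}C$, so $f\circ g$ is onto; here one notes that $f$ is in fact $B$-linear, since $E$ annihilates both $P_0$ and $M$ under the $B$-action of the introduction, and every $C$-linear map between such modules is automatically $B$-linear.

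The heart of the argument is a comparison of tops. Because $E$ is a nilpotent ideal of $B$ it lies in $\operatorname{rad}B$, and the identification $B/E\cong C$ yields $\operatorname{rad}B=\operatorname{rad}C\oplus E$; in particular the simple $B$-modules coincide with the simple $C$-modules and $B/\operatorname{rad}B\cong C/\operatorname{rad}C$. Using right-exactness of the tensor product, I would compute $\operatorname{top}_B(P_0\otimes_C B)=(P_0\otimes_C B)\otimes_B(B/\operatorname{rad}B)\cong P_0\otimes_C(C/\operatorname{rad}C)=\operatorname{top}_C P_0$. Next, since $f$ is a projective cover in $\mathop{\text{mod}}C$ we have $\operatorname{top}_C P_0\cong\operatorname{top}_C M$; and since $E$ annihilates $M$, one has $\operatorname{rad}_B M=M\operatorname{rad}B=M\operatorname{rad}C=\operatorname{rad}_C M$, so $\operatorname{top}_C M\cong\operatorname{top}_B M$. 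Chaining these isomorphisms gives $\operatorname{top}_B(P_0\otimes_C B)\cong\operatorname{top}_B M$.

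To finish, I would apply the right-exact functor $\operatorname{top}_B(-)$ to the $B$-epimorphism $f\circ g$, obtaining a surjection $\operatorname{top}_B(P_0\otimes_C B)\twoheadrightarrow\operatorname{top}_B M$ of semisimple $B$-modules. By the previous paragraph these two modules have the same finite length, so this surjection is forced to be an isomorphism. For an epimorphism out of a projective module over a finite-dimensional algebra, inducing an isomorphism on tops is equivalent to having superfluous kernel, which is precisely the assertion that $f\circ g$ is a projective cover of $M$ in $\mathop{\text{mod}}B$.

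The only genuinely delicate step is the top computation $\operatorname{top}_B(P_0\otimes_C B)\cong\operatorname{top}_C P_0$ together with the careful bookkeeping of how $P_0$ and $M$ sit inside $\mathop{\text{mod}}B$ via the zero action of $E$; once the radicals are matched up everything else is formal. As an alternative to the top argument, one could instead prove directly that a composite of two epimorphisms with superfluous kernels again has superfluous kernel (using that $\operatorname{rad}_B P_0=\operatorname{rad}_C P_0$, so $\ker f$ remains superfluous in $P_0$ as a $B$-module), and then combine this with the projectivity of $P_0\otimes_C B$; I expect the top-based route to read more cleanly in this setting.
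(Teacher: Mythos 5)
Your proof is correct, and all the delicate points check out: induction preserves projectives; $f$ is indeed $B$-linear because $E$ acts as zero on both $P_0$ and $M$ under the embedding; $\operatorname{rad}B=\operatorname{rad}C\oplus E$ holds since $E$ is a nilpotent ideal and the extension splits, which justifies both the identification of simples and the computations $\operatorname{top}_B(P_0\otimes_CB)\cong P_0\otimes_C(C/\operatorname{rad}C)\cong\operatorname{top}_CP_0$ and $\operatorname{top}_BM\cong\operatorname{top}_CM$; and a surjection between semisimple modules of the same finite length is an isomorphism, which for an epimorphism out of a projective is equivalent to superfluous kernel. One caveat on the comparison: this paper does not actually prove the lemma --- it is imported verbatim from \cite[Lemma~4.1]{Zito} --- so there is no in-text argument to measure you against. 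The usual proof (and the more economical one, which you correctly sketch as your alternative) avoids tops entirely: since $E$ annihilates $P_0$ and $M$, the $B$-submodule lattices of $P_0$ and $M$ coincide with their $C$-submodule lattices, so $\ker f$ is superfluous in $P_0$ as a $B$-module, and one invokes the general fact that a composite of epimorphisms with superfluous kernels again has superfluous kernel. Your top-counting route costs a bit more bookkeeping with radicals but has the virtue of making explicit that $P_0\otimes_CB$ and $M$ have isomorphic tops, which is exactly the datum identifying $P_0\otimes_CB$ as \emph{the} projective cover; either argument is complete and correct.
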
  
  
  \section{Main Results}
 We assume $C$ is an algebra of global dimension 2 and $B=C\ltimes E$ where $E$=Ext$_C^2(DC,C)$.  Specific cases will be explicitly stated.  We wish to use various homological dimensions to derive information about the $\tau_B$-rigidity of a $C$-module.  We begin with determining when the Auslander-Reiten translation of a $C$-module remains unchanged in $\mathop{\text{mod}}C$ and $\mathop{\text{mod}}B$, i.e., when is $\tau_CM\cong\tau_BM$ as $B$-modules.
 \subsection{Homological Dimensions and $\tau_B$-rigidity}
 \begin{prop} Let M be a C-module.  Then $\tau_CM\cong\tau_BM$ if and only if $\pd_C\tau_CM\leq1$ and $\id_CM\leq1$ \end{prop}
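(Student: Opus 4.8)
The plan is to reformulate the two homological hypotheses as vanishing conditions involving the bimodule $E$, and then to recognize these as exactly the hypotheses of Theorem \ref{AZ Main}(c). First I would record two equivalences. Combining Lemma \ref{Homological Result}(a) with the isomorphism $E\cong\tau_C^{-1}\Omega_C^{-1}C$ of Proposition \ref{SS Prop 4.1}(a) shows that $\pd_C\tau_CM\leq1$ holds if and only if $\Hom_C(E,\tau_CM)=0$. Next, Proposition \ref{SS, Proposition 4.2}(a) says $\id_CM\leq1$ if and only if $M\otimes_CB\cong M$; feeding this into the short exact sequence $0\rightarrow M\otimes_CE\rightarrow M\otimes_CB\rightarrow M\rightarrow0$ of Proposition \ref{SS, Proposition 3.6}(a) and comparing $k$-dimensions gives $\id_CM\leq1$ if and only if $M\otimes_CE=0$. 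Thus the statement to be proved becomes equivalent to: $\tau_CM\cong\tau_BM$ if and only if $\Hom_C(E,\tau_CM)=0$ and $M\otimes_CE=0$.

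For the ``if'' direction this is immediate: assuming both vanishings, the hypothesis $M\otimes_CE=0$ of Lemma \ref{AZ corollary} holds, so that lemma yields $\tau_BM\cong\tau_CM\oplus\Hom_C(E,\tau_CM)\cong\tau_CM$ as $C$-modules, the second summand being zero. No decomposition of $M$ is needed here.

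For the ``only if'' direction I would first reduce to the case that $M$ is indecomposable. Because the $B$-action on $M$ factors through the projection $B\rightarrow C$ (so $E$ acts as zero), any decomposition $M=\bigoplus_iM_i$ into indecomposable $C$-modules is simultaneously a decomposition of $B$-modules, whence $\tau_CM=\bigoplus_i\tau_CM_i$ and $\tau_BM=\bigoplus_i\tau_BM_i$. By Lemma \ref{AR submodule} each $\tau_CM_i$ is a $C$-submodule of $\tau_BM_i$, so $\dim_k\tau_CM_i\leq\dim_k\tau_BM_i$; the hypothesis $\tau_CM\cong\tau_BM$ forces equality of total dimensions, hence equality in each summand, so $\tau_CM_i\cong\tau_BM_i$ for every $i$. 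It therefore suffices to treat an indecomposable $M$. If $M$ is non-projective, Theorem \ref{AZ Main} gives $\Hom_C(E,\tau_CM)=0$ and $M\otimes_CE=0$ directly from $\tau_CM\cong\tau_BM$, and the two equivalences above finish this case. If $M$ is projective, then $\tau_CM=0$, so the hypothesis reads $\tau_BM=0$, i.e.\ $M$ is projective as a $B$-module; here $\pd_C\tau_CM\leq1$ is automatic, and to get $\id_CM\leq1$ I would use that $M\cong e_iC$ makes $M\otimes_CB\cong e_iB$ indecomposable, so that the (now split, since $M$ is $B$-projective) sequence $0\rightarrow M\otimes_CE\rightarrow M\otimes_CB\rightarrow M\rightarrow0$ forces $M\otimes_CE=0$, giving $\id_CM\leq1$ by the equivalence above.

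The main obstacle is the forward direction, and more precisely the extraction of $\id_CM\leq1$. A naive dimension count does not suffice: combining Lemma \ref{AR submodule} with the isomorphism $\tau_B(M\otimes_CB)\cong\tau_CM\oplus\Hom_C(E,\tau_CM)$ of Lemma \ref{AM, 2.1} shows painlessly that $\tau_CM\cong\tau_BM$ implies $\Hom_C(E,\tau_CM)=0$, and hence $\pd_C\tau_CM\leq1$; but this argument gives no control over $M\otimes_CE$, so $\id_CM\leq1$ cannot be read off from dimensions alone. This is exactly why the reduction to indecomposables is worthwhile, since it lets Theorem \ref{AZ Main} produce the vanishing $M\otimes_CE=0$ for the non-projective summands, with the projective summands dispatched by the splitting argument above.
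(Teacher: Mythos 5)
Your proof is correct, and it is in fact more careful than the paper's own. The paper's proof is a short citation chain: it translates $\pd_C\tau_CM\leq1$ into $\Hom_C(E,\tau_CM)=0$ via Lemma \ref{Homological Result} and Proposition \ref{SS Prop 4.1}, translates $\id_CM\leq1$ into $M\otimes_CE=0$ via Proposition \ref{SS, Proposition 4.2}, and then invokes the equivalence (b)$\Leftrightarrow$(c) of Theorem \ref{AZ Main} for an arbitrary module $M$ --- even though that theorem is stated only for \emph{indecomposable non-projective} $M$. You spotted exactly this gap and repaired it: the reduction to indecomposable summands (Lemma \ref{AR submodule} plus the dimension count, which legitimately splits the hypothesis $\tau_CM\cong\tau_BM$ summandwise) and the separate treatment of projective summands (where $\tau_BM=0$ makes $M$ $B$-projective, the sequence $0\rightarrow M\otimes_CE\rightarrow M\otimes_CB\rightarrow M\rightarrow0$ splits, and indecomposability of $e_iB$ forces $M\otimes_CE=0$) are precisely what is needed to make the paper's one-line appeal to Theorem \ref{AZ Main} valid; your use of Lemma \ref{AZ corollary} in the ``if'' direction is also a clean way to avoid any decomposition there. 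One small point to tighten: Lemma \ref{AZ corollary} only yields $\tau_BM\cong\tau_CM$ as \emph{$C$-modules}, whereas the statement (per the paragraph preceding the proposition) concerns an isomorphism of $B$-modules; this is fixed in one line, since by Lemma \ref{AR submodule} the module $\tau_CM$ is a submodule of $\tau_BM$, and equality of $k$-dimensions then forces $\tau_CM=\tau_BM$ as $B$-modules. The trade-off between the two arguments: the paper's is shorter but silently assumes that conditions (b) and (c) of Theorem \ref{AZ Main} behave additively over summands and says nothing about projective summands, while yours is longer but genuinely covers decomposable modules and modules with projective summands --- a case that is not vacuous, since for instance $M=C$ need not be projective over $B$.
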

 \begin{proof}
 By Theorem $\ref{AZ Main}$, we know $\tau_CM\cong\tau_BM$ if and only if $\text{Hom}_C(E,\tau_CM)=0$ and $M\otimes_CE=0$.  Using Lemma $\ref{Homological Result}$, we know that $\pd_C\tau_CM\leq1$ if and only if $\text{Hom}_C(\tau_C^{-1}\Omega_C^{-1}C,\tau_CM)=0$.  Since Proposition $\ref{SS Prop 4.1}$ gives $E\cong\tau_C^{-1}\Omega_C^{-1}C$, this is equivalent to $\text{Hom}_C(E,\tau_CM)=0$.  Using Proposition $\ref{SS, Proposition 4.2}$, we have $M\otimes_CE=0$ if and only if $\id_CM\leq1$.  Our result follows. 
\end{proof}
  
\begin{prop}
 \label{ID}
 Let M be a  $\tau_C$-rigid C-module.  If $\id_CM\leq1$, then M is $\tau_B$-rigid.
 \end{prop}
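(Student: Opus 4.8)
The plan is to reduce the computation of $\Hom_B(M,\tau_B M)$ to a computation of homomorphisms of $C$-modules, where the two hypotheses, $\id_C M\le 1$ and $\tau_C$-rigidity, can each be used directly. First I would record that the injective dimension hypothesis forces the ``relation part'' of the induced module to vanish: since $\id_C M\le 1$, Proposition \ref{SS, Proposition 4.2} gives $M\otimes_C B\cong M$, and feeding this isomorphism into the short exact sequence $0\to M\otimes_C E\to M\otimes_C B\to M\to 0$ of Proposition \ref{SS, Proposition 3.6}(a) forces $M\otimes_C E=0$ by a dimension count. This is precisely the equivalence already used in the preceding proposition of this subsection.

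With $M\otimes_C E=0$ in hand, Lemma \ref{AZ corollary} applies and yields an isomorphism of $C$-modules $\tau_B M\cong \tau_C M\oplus \Hom_C(E,\tau_C M)$. The key structural observation is that a $B$-module homomorphism $M\to\tau_B M$ is in particular a homomorphism of the underlying $C$-modules, so restriction of scalars along the section $C\hookrightarrow B$ gives an inclusion $\Hom_B(M,\tau_B M)\subseteq \Hom_C(M,\tau_B M)$. Hence it suffices to prove the stronger vanishing $\Hom_C(M,\tau_B M)=0$, and for this I may work entirely inside $\mathop{\text{mod}}C$ using the decomposition above.

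Next I would split the target according to the direct sum, writing $\Hom_C(M,\tau_B M)\cong \Hom_C(M,\tau_C M)\oplus \Hom_C(M,\Hom_C(E,\tau_C M))$. The first summand vanishes because $M$ is $\tau_C$-rigid. For the second summand, the main point is the hom-tensor adjunction $\Hom_C(M,\Hom_C(E,\tau_C M))\cong \Hom_C(M\otimes_C E,\tau_C M)$, whose right-hand side is zero since $M\otimes_C E=0$. Combining the two vanishings gives $\Hom_C(M,\tau_B M)=0$, and therefore $\Hom_B(M,\tau_B M)=0$, i.e.\ $M$ is $\tau_B$-rigid.

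The step I expect to require the most care is this second summand. The subtlety is that we do \emph{not} know $\Hom_C(E,\tau_C M)=0$ (that would require $\pd_C\tau_C M\le 1$, which is not assumed), so $\tau_B M$ may genuinely be strictly larger than $\tau_C M$ and Theorem \ref{AZ Main} cannot be invoked to conclude $\tau_C M\cong\tau_B M$. The real content of the argument is that this extra summand $\Hom_C(E,\tau_C M)$, although possibly nonzero, still receives no nonzero map from $M$; this is exactly what the adjunction together with $M\otimes_C E=0$ delivers. The only genuinely technical verification is that the $C$-$C$-bimodule structure on $E$ makes the adjunction isomorphism $C$-linear in the relevant variable, so that both the decomposition of $\tau_B M$ and the adjunction are compatible.
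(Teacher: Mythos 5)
Your proof is correct and takes essentially the same route as the paper: deduce $M\otimes_C E=0$ from $\id_C M\le 1$ via Proposition \ref{SS, Proposition 4.2}, apply Lemma \ref{AZ corollary} to decompose $\tau_B M\cong\tau_C M\oplus\Hom_C(E,\tau_C M)$ as $C$-modules, pass to $\Hom_C$ by restriction of scalars, and kill the two summands by $\tau_C$-rigidity and by the tensor-hom adjunction $\Hom_C(M,\Hom_C(E,\tau_C M))\cong\Hom_C(M\otimes_C E,\tau_C M)=0$, respectively. The only difference is cosmetic: you spell out the dimension count extracting $M\otimes_C E=0$ from $M\otimes_C B\cong M$ and the short exact sequence of Proposition \ref{SS, Proposition 3.6}(a), a step the paper attributes directly to Proposition \ref{SS, Proposition 4.2}.
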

 \begin{proof}
 Since $\id_CM\!\leq\!1$, Proposition $\ref{SS, Proposition 4.2}$ implies $M\otimes_CE=0$.  By Lemma $\ref{AZ corollary}$, we have $\tau_BM\cong\tau_CM\oplus\text{Hom}_C(E,\tau_CM)$ as $C$-modules.  Now, we want to show that $\text{Hom}_B(M,\tau_BM)=0$.  Since any $B$-module homomorphism is also a $C$-module homomorphism, it suffices to show that $\text{Hom}_C(M,\tau_CM)$ and $\text{Hom}_C(M,\text{Hom}_C(E,\tau_CM))$ are equal to 0.  Now, $\text{Hom}_C(M,\text{Hom}_C(E,\tau_CM))\cong\text{Hom}_C(M\otimes_CE,\tau_CM)$ by the adjoint isomorphism.  Since $M\otimes_CE=0$, we conclude $\text{Hom}_C(M,\text{Hom}_C(E,\tau_CM))=0$.  Certainly, $M$ being $\tau_C$-rigid implies $\text{Hom}_C(M,\tau_CM)=0$.  Thus, we conclude $M$ is $\tau_B$-rigid.
 \end{proof}
 \begin{prop} 
 \label{Induced}
 Let M be a $\tau_C$-rigid C-module.  If $\pd_C\tau_CM\leq1$, then the induced module $M\otimes_CB$ is $\tau_B$-rigid.
 \end{prop}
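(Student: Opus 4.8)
The plan is to move the computation of $\Hom_B(M\otimes_C B,\tau_B(M\otimes_C B))$ from $\mathop{\text{mod}}B$ down to $\mathop{\text{mod}}C$ via the standard adjunction between induction and restriction. Since $C$ is a subalgebra of $B$ with $1_C=1_B$, the induction functor $-\otimes_C B$ is left adjoint to restriction, so for any $B$-module $N$ one has $\Hom_B(M\otimes_C B,N)\cong\Hom_C(M,N)$, where on the right $N$ is regarded as a $C$-module. I would take $N=\tau_B(M\otimes_C B)$ and invoke Lemma \ref{AM, 2.1}, which identifies $\tau_B(M\otimes_C B)\cong\tau_C M\oplus\Hom_C(E,\tau_C M)$ as $C$-modules. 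This yields
\[
\Hom_B(M\otimes_C B,\tau_B(M\otimes_C B))\cong\Hom_C(M,\tau_C M)\oplus\Hom_C(M,\Hom_C(E,\tau_C M)),
\]
and it then suffices to show each summand vanishes.

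The first summand is zero immediately, since $M$ is $\tau_C$-rigid. For the second, I would apply the tensor-hom adjunction over $C$ to rewrite $\Hom_C(M,\Hom_C(E,\tau_C M))\cong\Hom_C(M\otimes_C E,\tau_C M)$, and then use Proposition \ref{SS Prop 4.1}(c) to replace $M\otimes_C E$ by $\tau_C^{-1}\Omega_C^{-1}M$. Thus everything reduces to the single vanishing statement $\Hom_C(\tau_C^{-1}\Omega_C^{-1}M,\tau_C M)=0$, and this is precisely where the hypothesis $\pd_C\tau_C M\leq1$ will be used.

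This last vanishing is the main technical point, and it is a mild generalization of Corollary \ref{corollary homological}: instead of targeting $M$ (as in the corollary, which needs $\pd_C M\leq1$), I target $\tau_C M$, using that $\tau_C M$ has projective dimension at most $1$. I would run the same argument verbatim: choose a projective cover $f\colon P\rightarrow M$, apply $-\otimes_C E$ to obtain a surjection $P\otimes_C E\rightarrow M\otimes_C E$, and apply $\Hom_C(-,\tau_C M)$ to obtain an injection $\Hom_C(M\otimes_C E,\tau_C M)\hookrightarrow\Hom_C(P\otimes_C E,\tau_C M)$. Since $P$ is a summand of a free module, $P\otimes_C E$ is a summand of a direct sum of copies of $E\cong\tau_C^{-1}\Omega_C^{-1}C$ (Proposition \ref{SS Prop 4.1}(a)), so Lemma \ref{Homological Result}(a) applied to $\tau_C M$ gives $\Hom_C(\tau_C^{-1}\Omega_C^{-1}C,\tau_C M)=0$, whence $\Hom_C(P\otimes_C E,\tau_C M)=0$ and therefore $\Hom_C(M\otimes_C E,\tau_C M)=0$. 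Combining the two vanishing statements shows $\Hom_B(M\otimes_C B,\tau_B(M\otimes_C B))=0$, i.e. $M\otimes_C B$ is $\tau_B$-rigid. The adjunction reductions are routine; the only place requiring care is this final step, where I must verify that the projective-cover argument of Corollary \ref{corollary homological} carries over with the target changed from $M$ to $\tau_C M$, so that the projective-dimension hypothesis is invoked on the correct module.
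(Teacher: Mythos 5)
Your proof is correct, and it takes a genuinely different route from the paper's, though the two share their technical core. The paper starts from the short exact sequence $0\rightarrow\tau_C^{-1}\Omega_C^{-1}M\rightarrow M\otimes_CB\rightarrow M\rightarrow0$ of Proposition \ref{SS, Proposition 3.6}, applies $\Hom_B(-,\tau_B(M\otimes_CB))$, and kills the two outer terms of the resulting exact sequence: for the term at $M$ it notes that $\pd_C\tau_CM\leq1$ forces $\Hom_C(E,\tau_CM)=0$ by Lemma \ref{Homological Result}, so Lemma \ref{AM, 2.1} collapses to $\tau_B(M\otimes_CB)\cong\tau_CM$ and $\tau_C$-rigidity applies; for the term at $\tau_C^{-1}\Omega_C^{-1}M$ it runs exactly the projective-cover argument you describe at the end. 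Your induction--restriction adjunction $\Hom_B(M\otimes_CB,N)\cong\Hom_C(M,N)$ replaces the exact-sequence bookkeeping with a clean direct-sum decomposition, and your care about where $\pd_C\tau_CM\leq1$ enters is exactly right: the argument of Corollary \ref{corollary homological} does carry over verbatim with target $\tau_CM$, via Lemma \ref{Homological Result}(a) and $E\cong\tau_C^{-1}\Omega_C^{-1}C$ from Proposition \ref{SS Prop 4.1}. Worth noting: your framing actually admits a shortcut you did not take. Since $\pd_C\tau_CM\leq1$ already gives $\Hom_C(E,\tau_CM)=0$ directly by Lemma \ref{Homological Result}(a), your second summand $\Hom_C(M,\Hom_C(E,\tau_CM))$ is a $\Hom$ into the zero module, so the tensor-hom adjunction and the projective-cover argument become unnecessary in your setup; conversely, running your adjunction backwards, $\Hom_C(\tau_C^{-1}\Omega_C^{-1}M,\tau_CM)\cong\Hom_C(M\otimes_CE,\tau_CM)\cong\Hom_C(M,\Hom_C(E,\tau_CM))=0$ shows the paper's projective-cover step could likewise be replaced by this one-line observation. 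So your approach buys a shorter and more conceptual proof, at the mild cost of invoking the adjunction machinery that the paper's hands-on exact-sequence argument avoids.
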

 \begin{proof}
 Consider the following short exact sequence guaranteed by Proposition $\ref{SS, Proposition 3.6}$ and Proposition $\ref{SS Prop 4.1}$.
 \[
 0\rightarrow\tau_C^{-1}\Omega_C^{-1}M\rightarrow M\otimes_CB\rightarrow M\rightarrow 0.
 \]
 Apply $\text{Hom}_B(-,\tau_B(M\otimes_CB))$ to obtain the exact sequence
 \[
 \text{Hom}_B(M,\tau_B(M\otimes_CB))\rightarrow\text{Hom}_B(M\otimes_CB,\tau_B(M\otimes_CB))\rightarrow\text{Hom}_B(\tau_C^{-1}\Omega_C^{-1}M,\tau_B(M\otimes_CB)).
 \]
 We wish to show that $\text{Hom}_B(M\otimes_CB,\tau_B(M\otimes_CB))=0$.  Using Lemma $\ref{AM, 2.1}$, we know that $\tau_B(M\otimes_CB)\cong\tau_CM\oplus\text{Hom}_C(E,\tau_CM)$ as $C$-modules.  Since $\pd_C\tau_CM\leq1$, Lemma $\ref{Homological Result}$ implies $\text{Hom}_C(E,\tau_CM)=0$.  Thus, $\tau_B(M\otimes_CB)\cong\tau_CM$.  Since $M$ is a $\tau_C$-rigid module, we have that $\text{Hom}_B(M,\tau_B(M\otimes_CB))=0$.  
 \par
Next, consider $f\!:P_0\rightarrow M$, a projective cover of $M$ in $\mathop{\text{mod}}C$.  Apply the functor $-\otimes_CE$ to obtain a surjective morphism $f\otimes_C1_E\!:P_0\otimes_CE\rightarrow M\otimes_CE$.  This gives a short exact sequence
\[
 0\rightarrow\ker f\otimes_C1_E\rightarrow P_0\otimes_CE\xrightarrow{f\otimes_C1_E}M\otimes_CE\rightarrow 0.
 \] 
 Apply $\text{Hom}_C(-,\tau_CM)$ to obtain the exact sequence
 \[ 
  0\rightarrow\text{Hom}_C(M\otimes_CE,\tau_CM)\xrightarrow{\overline{f\otimes_C1_E}}\text{Hom}_C(P_0\otimes_CE,\tau_CM).
 \] 
 We know from Proposition $\ref{SS Prop 4.1}$ that $P_0\otimes_CE\cong\tau_C^{-1}\Omega_C^{-1}P_0$ and $M\otimes_CE\cong\tau_C^{-1}\Omega_C^{-1}M$.  Thus, any non-zero morphism from $\tau_C^{-1}\Omega_C^{-1}M$ to $\tau_CM$ would imply a non-zero morphism from $\tau_C^{-1}\Omega_C^{-1}P_0$ to $\tau_CM$ because $\overline{f\otimes_C1_E}$ is injective.  Since $\pd_C\tau_CM\leq1$, this is a contradiction by Lemma $\ref{Homological Result}$.  Thus, $\text{Hom}_B(\tau_C^{-1}\Omega_C^{-1}M,\tau_B(M\otimes_CB))=0$.  Since we have shown that $\text{Hom}_B(M,\tau_B(M\otimes_CB))$ and $\text{Hom}_B(\tau_C^{-1}\Omega_C^{-1},\tau_B(M\otimes_CB))$ are equal to 0, we conclude $\text{Hom}_B(M\otimes_CB,\tau_B(M\otimes_CB))=0$.
 \end{proof}

\subsection{Partial Tilting Modules and $\tau_B$-rigidity}
 In this section, we examine partial tilting $C$-modules and their $\tau_B$-rigidity.  We begin with a sufficient condition for $M$ to be $\tau_B$-rigid where $B$ is a split extension of $C$ by a nilpotent bimodule $E$ and $M$ is $\tau_C$-rigid but not necessarily partial tilting.  This result was shown in $\cite{Zito2}$ but we include the proof for the benefit of the reader.

\begin{prop}$\emph{\cite[Proposition~3.1]{Zito2}}$.
\label{result}
If $\emph{Hom}_C(M\otimes_CE,\Gen M)=0$, then $M$ is $\tau_B$-rigid.
\end{prop}
\begin{proof}
By Proposition $\ref{SS, Proposition 3.6}$, we have the following short exact sequence in $\mathop{\text{mod}}B$ 

\[
0\rightarrow M\otimes_CE\rightarrow M\otimes_CB\rightarrow M\rightarrow 0.  
\]
Applying $\text{Hom}_B(-,\Gen M)$, we obtain an exact sequence
\[
\text{Hom}_B(M\otimes_CE,\Gen M)\rightarrow \text{Ext}_B^1(M,\Gen M)\rightarrow \text{Ext}_B^1(M\otimes_CB,\Gen M).
\]
First, we wish to show $\text{Ext}_B^1(M\otimes_CB,\Gen M)=0$.  We know from Proposition $\ref{GenM Result}$ this is equivalent to $\text{Hom}_B(M,\tau_B(M\otimes_CB))=0$.  By Lemma $ \ref{Adjunct}$ and the assumption that $M$ is $\tau_C$-rigid, $\text{Hom}_B(M,\tau(M\otimes_CB))\cong\text{Hom}_C(M,\tau_CM)=0$.  Next, we want to show $\text{Hom}_B(M\otimes_CE,\Gen M)=0$.  By restriction of scalars, any non-zero morphism from $M\otimes_CE$ to $\Gen M$ in $\mathop{\text{mod}}B$ would give a non-zero morphism in $\mathop{\text{mod}}C$, contrary to our assumption.  Thus, $\text{Hom}_B(M\otimes_CE,\Gen M)=0$.  We conclude $\text{Ext}_B^1(M,\Gen M)=0$ and Proposition $\ref{GenM Result}$ implies $M$ is $\tau_B$-rigid.
\end{proof}
 For the next result, we assume $C$ is an algebra of global dimension 2 and $B=C\ltimes E$ where $E=\text{Ext}_C^2(DC,C)$. 
 \begin{theorem} 
 \label{Partial Tilt Result}
 Let M be a partial tilting C-module such that $\pd_C\tau_CM\leq1$.  Then M is $\tau_B$-rigid if and only if $\emph{Hom}_C(\tau_C^{-1}\Omega_C^{-1}M,\Gen M)=0$.
 \end{theorem}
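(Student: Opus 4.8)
The plan is to reduce the $\tau_B$-rigidity of $M$ to the vanishing of $\Ext_B^1(M,-)$ on $\Gen M$, and then to identify that Ext group with the Hom group appearing in the statement. Since the $B$-action on $M$ factors through the projection $B\to C$ (the ideal $E$ annihilates $M$), every quotient of a power of $M$ again has trivial $E$-action; hence $\Gen_B M=\Gen_C M=:\Gen M$. Applying Proposition \ref{GenM Result} over $B$ with $X=Y=M$, the module $M$ is $\tau_B$-rigid if and only if $\Ext_B^1(M,X)=0$ for every $X\in\Gen M$. Thus it suffices to prove that for each such $X$ there is an isomorphism $\Ext_B^1(M,X)\cong\Hom_C(\tau_C^{-1}\Omega_C^{-1}M,X)$.

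To produce this isomorphism, write $N=\tau_C^{-1}\Omega_C^{-1}M\cong M\otimes_C E$ and use the short exact sequence of $B$-modules $0\to N\to M\otimes_C B\to M\to 0$ supplied by Proposition \ref{SS, Proposition 3.6} and Proposition \ref{SS Prop 4.1}. Applying $\Hom_B(-,X)$ gives the exact sequence
\[
\Hom_B(M\otimes_C B,X)\xrightarrow{g}\Hom_B(N,X)\to\Ext_B^1(M,X)\to\Ext_B^1(M\otimes_C B,X).
\]
The right-hand term vanishes: by Proposition \ref{Induced} the module $M\otimes_C B$ is $\tau_B$-rigid (this is the only place the hypotheses that $M$ is partial tilting and $\pd_C\tau_C M\leq1$ are used), and since the sequence exhibits $M$ as a quotient of $M\otimes_C B$ we have $X\in\Gen M\subseteq\Gen_B(M\otimes_C B)$, so Proposition \ref{GenM Result} forces $\Ext_B^1(M\otimes_C B,X)=0$. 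The map $g$ is restriction along $N\hookrightarrow M\otimes_C B$; writing a typical element of $N$ as $(m\otimes 1)\cdot(0,e)$ and using that $X$ has trivial $E$-action, one checks that every $B$-homomorphism $M\otimes_C B\to X$ kills $N$, so $g=0$. Hence the connecting map is an isomorphism $\Hom_B(N,X)\cong\Ext_B^1(M,X)$. Finally, since $N$ also carries the trivial $E$-action as a $B$-module (the ideal $E$ is square-zero), any $C$-linear map $N\to X$ is automatically $B$-linear, whence $\Hom_B(N,X)=\Hom_C(N,X)$. Combining these identifications yields the required isomorphism, and letting $X$ range over $\Gen M$ gives the theorem.

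The most delicate point is the pair of identifications $g=0$ and $\Hom_B(N,X)=\Hom_C(N,X)$, on which the whole equivalence rests. Both depend on the precise $B$-module structure of $N=M\otimes_C E$ sitting inside $M\otimes_C B$, so I would verify carefully that $E$ acts as zero on $N$ (so the Hom computed over $B$ and over $C$ coincide) and that the image of $g$ is genuinely $0$ rather than merely some subspace; otherwise $\Ext_B^1(M,X)$ would only be a quotient of $\Hom_C(N,X)$ and the clean ``if and only if'' would break. Checking $X\in\Gen_B(M\otimes_C B)$ and invoking Proposition \ref{Induced} correctly are comparatively routine, but they are essential for killing the fourth term of the sequence and hence for the whole argument.
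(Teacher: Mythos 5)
Your proof is correct, and while your forward direction coincides with the paper's (same short exact sequence $0\to\tau_C^{-1}\Omega_C^{-1}M\to M\otimes_CB\to M\to0$, same use of Proposition \ref{Induced} and Proposition \ref{GenM Result} to kill $\Ext_B^1(M\otimes_CB,\Gen M)$), your converse takes a genuinely different and substantially shorter route. The paper proves the converse with $\tau$-tilting approximation machinery: from Lemma \ref{AR submodule} it gets $^{\perp}(\tau_BM)\subseteq{}^{\perp}(\tau_B(M\otimes_CB))$, invokes Lemma \ref{Butt Hurt} to produce a minimal left $\Gen M$-approximation $f\colon M\otimes_CB\to M'$ with $M'\in\add M$, argues the Ext-projective cokernel vanishes, and then factors an arbitrary morphism $\tau_C^{-1}\Omega_C^{-1}M\to\Gen M$ through $M'$, killing it via Corollary \ref{corollary homological} and Lemma \ref{Homological Result}. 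You instead observe a structural fact the paper never exploits: the image of $N=M\otimes_CE$ in $M\otimes_CB$ is exactly $(M\otimes_CB)E$ (since $CE=E$), so every $B$-morphism from $M\otimes_CB$ to a module $X$ with $XE=0$ annihilates $N$; hence the restriction map $g$ in the long exact sequence is zero, the connecting map is an isomorphism $\Hom_C(\tau_C^{-1}\Omega_C^{-1}M,X)\cong\Ext_B^1(M,X)$ for every $X\in\Gen M$ (your checks that $NE=0$, so $\Hom_B(N,X)=\Hom_C(N,X)$, and that $\Gen_BM=\Gen_CM$, are both sound), and both implications drop out simultaneously. Amusingly, your observation that $g=0$ would also short-circuit the paper's own converse: the paper establishes that this very map $\overline{h}$ is surjective and then works to show its target vanishes, whereas surjectivity of the zero map forces the target to vanish immediately. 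What each approach buys: yours is self-contained, avoids Lemmas \ref{Butt Hurt} and \ref{AR submodule} entirely, uses the hypotheses only through Proposition \ref{Induced}, and proves the stronger functorial statement $\Ext_B^1(M,X)\cong\Hom_C(\tau_C^{-1}\Omega_C^{-1}M,X)$; the paper's argument, though longer, showcases the approximation technique that it reuses in the later proposition on $\tau_C$-tilting modules, where your trick does not directly apply.
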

 \begin{proof}
 Assume $\text{Hom}_C(\tau_C^{-1}\Omega_C^{-1}M,\Gen M)=0$.  We know from Proposition $\ref{SS Prop 4.1}$ (c) that $M\otimes_CE\cong\tau_C^{-1}\Omega_C^{-1}M$.  By Proposition $\ref{result}$, $M$ is $\tau_B$-rigid.
\par
Assume $M$ is $\tau_B$-rigid.  Since $\pd_C\tau_CM\leq1$, we know $M\otimes_CB$ is a $\tau_B$-rigid module by Proposition $\ref{Induced}$.  Since $M\otimes_CB$ is $\tau_B$-rigid and $\tau_B(M\otimes_CB)$ is a submodule of $\tau_BM$ by Lemma $\ref{AR submodule}$, we have $^{\perp}(\tau_BM)\subseteq~^{\perp}(\tau_B(M\otimes_CB))$.  Thus, Lemma $\ref{Butt Hurt}$ guarantees an exact sequence 
\[
M\otimes_CB\xrightarrow{f}M'\xrightarrow{g} N\rightarrow0
\]
where $M'\in\add M$ and $N\in\add P(\Gen M)$.  Next, consider the short exact sequence
\[
0\rightarrow\ker g\xrightarrow{i} M'\xrightarrow{g}N\rightarrow 0.
\]  
We know that $f\!:M\otimes_CB\rightarrow\ker g$ is a surjective morphism.  Considering $f$ as a morphism of $C$-modules, we have a surjective morphism $f\!:M\oplus\tau_C^{-1}\Omega_C^{-1}M\rightarrow\ker g$ where the following decomposition $M\otimes_CB\cong M\oplus\tau_C^{-1}\Omega_C^{-1}M$ is given by Proposition $\ref{SS Prop 4.1}$.  Now, consider the $\text{Hom}$ space $\text{Hom}_C(\tau_C^{-1}\Omega_C^{-1}M,\ker g)$.  If this $\text{Hom}$ space were not equal to 0, then the injectivity of $i$ would imply a non-zero morphism from $\tau_C^{-1}\Omega_C^{-1}M$ to $M'$.  But $M'$ is partial tilting and we would have a contradiction to Corollary $\ref{corollary homological}$.  But we can not have a surjective morphism from $M$ to $\ker g$ because this would imply $\ker g\in\Gen M$ and would contradict $N\in\add P(\Gen M)$.  Thus, $N=0$ and we have a short exact sequence
\[
0\rightarrow\ker f\rightarrow M\otimes_CB\xrightarrow{f}M'\rightarrow 0.
\]     
Apply $\text{Hom}_B(-,\Gen M)$ to obtain an exact sequence
\[
0\rightarrow\text{Hom}_B(M',\Gen M)\xrightarrow{\overline{f}}\text{Hom}_B(M\otimes_CB,\Gen M)\rightarrow\text{Hom}_B(\ker f,\Gen M).
\]
Now, Lemma $\ref{Butt Hurt}$ says that $f$ is a left $\Gen M$-approximation of $M\otimes_CB$.  This implies that $\overline{f}$ is surjective and the exactness of the sequence further implies $\overline{f}$ is an isomorphism.  
Using the following short exact sequence guaranteed by Proposition $\ref{SS, Proposition 3.6}$ and Proposition $\ref{SS Prop 4.1}$.
 \[
 0\rightarrow\tau_C^{-1}\Omega_C^{-1}M\xrightarrow{h} M\otimes_CB\rightarrow M\rightarrow 0
 \]
, we apply $\text{Hom}_B(-,\Gen M)$ to obtain an exact sequence
\[
0\rightarrow\text{Hom}_B(M,\Gen M)\rightarrow\text{Hom}_B(M\otimes_CB,\Gen M)\xrightarrow{\overline{h}}\text{Hom}_B(\tau_C^{-1}\Omega_C^{-1}M,\Gen M)\rightarrow 0
\]     
 where $\text{Ext}_B^1(M,\Gen M)=0$ by Proposition $\ref{GenM Result}$.  Since $\overline{h}$ is a surjective morphism, given $a\in\text{Hom}_B(\tau_C^{-1}\Omega_C^{-1}M,\Gen M)$, there exists $b\in\text{Hom}_B(M\otimes_CB,\Gen M)$ such that $a=b\circ h$.  
\[
\begin{tikzcd}
 \tau_C^{-1}\Omega_C^{-1}M\arrow{d}[swap]{a=b\circ h}\arrow{r}{h} & M\otimes_CB\arrow{dl}{b}\\
\Gen M
\end{tikzcd}
\]  
 
 Since we have a morphism $b$ from $M\otimes_CB$ to a module in $\Gen M$, we may use $\overline{f}$ to say there exists a morphism $c\in\text{Hom}_B(M',\Gen M)$ such that $b=c\circ f$.  
 \[
\begin{tikzcd}
 M\otimes_CB\arrow{d}[swap]{b=c\circ f}\arrow{r}{f} & M'\arrow{dl}{c}\\
\Gen M
\end{tikzcd}
\]  
 
So we have $a=b\circ h=c\circ f\circ h$.  
\[
\begin{tikzcd}
 \tau_C^{-1}\Omega_C^{-1}M\arrow{d}[swap]{a=c\circ f\circ h}\arrow{r}{ h} &M\otimes_CB\arrow{r}{f} & M'\arrow{dll}{c}\\
\Gen M 
\end{tikzcd}
\]

But $\text{Hom}_B(\tau_C^{-1}\Omega_C^{-1}M,M')=0$ by Corollary $\ref{corollary homological}$ and $a$ must be the 0 morphism.  Since $a$ was arbitrary, we conclude $\text{Hom}_B(\tau_C^{-1}\Omega_C^{-1}M,\Gen M)=0$ and our result follows.
 \end{proof}
 
 Our main result allows us to drop the assumption that $\pd_C\tau_CM\leq1$ in the special case $C$ is a tilted algebra and $B=C\ltimes E$ is the corresponding cluster-tilted algebra.

\begin{theorem} 
 \label{Partial Tilt Result}
 Let M be a partial tilting C-module.  Then M is $\tau_B$-rigid if and only if $\emph{Hom}_C(\tau_C^{-1}\Omega_C^{-1}M,\Gen M)=0$.
 \end{theorem}
 \begin{proof}
 Assume $\text{Hom}_C(\tau_C^{-1}\Omega_C^{-1}M,\Gen M)=0$.  We know $M\otimes_C E\cong\tau_C^{-1}\Omega_C^{-1}M$ by Proposition $\ref{SS Prop 4.1}$ (c).  Thus, $M$ is $\tau_B$-rigid by proposition $\ref{result}$.  Now, assume $M$ is $\tau_B$-rigid.  By Proposition $\ref{SS, Proposition 3.6}$, we have the following short exact sequence in $\mathop{\text{mod}}B$
 \[
0\rightarrow M\otimes_CE\xrightarrow{f} M\otimes_CB\xrightarrow{g} M\rightarrow 0.  
\] 
 Applying $\text{Hom}_B(-,\Gen M)$, we obtain an exact sequence
\[
\text{Hom}_B(M\otimes_CB,\Gen M)\xrightarrow{\overline{f}}\text{Hom}_B(M\otimes_CE,\Gen M)\rightarrow\text{Ext}_B^1(M,\Gen M).
\]
Since $M$ is $\tau_B$-rigid, we know $\text{Ext}_B^1(M,\Gen M)=0$ by Proposition $\ref{GenM Result}$.  Thus, $\overline{f}$ must be surjective.  Let $X\in\Gen M$.  The subjectivity of $\overline{f}$ implies, given any morphism $h\in\text{Hom}_B(M\otimes_CE,X)$, there exists a morphism $j\in\text{Hom}_B(M\otimes_CB,X)$ such that  $h=j\circ f$ in $\mathop{\text{mod}}B$.  If $h$ is non-zero,  by restriction of scalars, we have a non-zero composition $h_C=j_C\circ f_C$ in $\mathop{\text{mod}}C$.  Here $h_C$, $j_C$, and $f_C$ denote the $C$-module morphisms of $h$, $j$, and $f$ respectively.

We know $M\otimes_CE\cong\tau_C^{-1}\Omega_C^{-1}M$ by Proposition $\ref{SS Prop 4.1}$ (c).  Since $C$ is tilted, Lemma $\ref{SS 4.5}$ says $\id_C(\tau_C^{-1}\Omega_C^{-1}M)\leq1$.  Proposition $\ref{SS, Proposition 4.2}$ (a) then gives  $(\tau_C^{-1}\Omega_C^{-1}  M)\otimes_CB\cong\tau_C^{-1}\Omega_C^{-1}M$.  By Lemma $\ref{Adjunct}$, 
\[
 \text{Hom}_B(\tau_C^{-1}\Omega_C^{-1}M,M\otimes_CB)\cong\text{Hom}_C(\tau_C^{-1}\Omega_C^{-1}M,(M\otimes_CB)_C).
\] 
Here, $(M\otimes_CB)_C$ denotes the $C$-module structure of $M\otimes_CB$.  We know from Proposition $\ref{SS, Proposition 3.6}$ that, as a $C$-module, $M\otimes_CB\cong M\oplus(M\otimes_CE)$.  Again, by Proposition $\ref{SS Prop 4.1}$ (c), $M\otimes_CE\cong\tau_C^{-1}\Omega_C^{-1}M$.  So we have
\[   
\text{Hom}_C(\tau_C^{-1}\Omega_C^{-1}M,(M\otimes_CB)_C)\cong\text{Hom}_C(\tau_C^{-1}\Omega_C^{-1}M,\tau_C^{-1}\Omega_C^{-1}M\oplus M).
\]
Since $M$ is partial tilting, $\pd_CM\leq1$ and Corollary $\ref{corollary homological}$ says $\text{Hom}_C(\tau_C^{-1}\Omega_C^{-1}M,M)=0$.  Thus, 
\[
\text{Hom}_C(\tau_C^{-1}\Omega_C^{-1}M,(M\otimes_CB)_C)\cong\text{Hom}_C(\tau_C^{-1}\Omega_C^{-1}M,\tau_C^{-1}\Omega_C^{-1}M).
\]
By Lemma $\ref{Adjunct}$,
\[
\text{Hom}_C(M\otimes_CB,X)\cong\text{Hom}_C(M,(X)_C).
\]
Thus, $f_C$ is a morphism from $\tau_C^{-1}\Omega_C^{-1}M$ to itself and $j_C$ is a morphism from $M$ to $(X)_C$.  This implies the composition $j_C\circ f_C$ is 0 and contradicts $h_C$ being non-zero.  Since $h$ and $X$ were arbitrary, we conclude $\text{Hom}_B(\tau_C^{-1}\Omega_C^{-1}M,\Gen M)=0$ which implies $\text{Hom}_C(\tau_C^{-1}\Omega_C^{-1}M,\Gen M)=0$.

 \end{proof}

For an illustration of this theorem, see Examples $\ref{taufirst}$ and $\ref{tausecond}$ in section 5.
  \par
  As a corollary, we have a characterization determining when an indecomposable $\tau_C$-rigid module is also $\tau_B$-rigid.
  \begin{cor} 
  \label{Tau 1}
  Let M be an indecomposable $\tau_C$-rigid module.  Then M is $\tau_B$-rigid if and only if $\emph{Hom}_C(\tau_C^{-1}\Omega_C^{-1}M,\Gen M)=0.$
  \end{cor}
  \begin{proof}
  Since $M$ is indecomposable and $C$ is tilted, we know from Proposition $\ref{Tilting Prop}$ (e) that $M\in\mathcal{X}(T)$ or $M\in\mathcal{Y}(T)$.  Assume $M\in\mathcal{Y}(T)$.  By Proposition $\ref{Tilting Prop}$ (d), $\pd_CM\leq1$.  Since $M$ is $\tau_C$-rigid by assumption, we have $M$ is a partial tilting module.  Our result follows from Theorem $\ref{Partial Tilt Result}$.
\par
Next, assume $M\in\mathcal{X}(T)$.  Then Proposition $\ref{Tilting Prop}$ (c) says $\id_CM\leq1$.  Thus, $\tau_C^{-1}\Omega_C^{-1}M=0$ and certainly $\text{Hom}_C(\tau_C^{-1}\Omega_C^{-1}M,\Gen M)=0$.
 Also, Proposition $\ref{ID}$ says $M$ is $\tau_B$-rigid.  Our result follows. 
 \end{proof}
  The case where $M$ is a tilting $C$-module follows from the following proposition.
\begin{prop} 
\label{faithful 2}
Let M be a $\tau_C$-rigid module which is faithful.  Then M is $\tau_B$-rigid if and only if $\id_CM\leq1$.
\end{prop}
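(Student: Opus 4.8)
The plan is to treat the two implications separately. For the direction $\id_C M\le 1\Rightarrow M$ is $\tau_B$-rigid there is nothing new to prove: this is exactly Proposition \ref{ID}, and I note that faithfulness is not needed for it. All of the content is in the converse, so the real task is to assume $M$ is $\tau_B$-rigid and faithful and deduce $\id_C M\le1$.

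First I would translate the target into a vanishing statement. By Proposition \ref{SS, Proposition 4.2}(a), $\id_C M\le1$ is equivalent to $M\otimes_C B\cong M$, and by the short exact sequence of Proposition \ref{SS, Proposition 3.6}(a) together with the identification $M\otimes_C E\cong\tau_C^{-1}\Omega_C^{-1}M$ from Proposition \ref{SS Prop 4.1}(c), this holds precisely when $\tau_C^{-1}\Omega_C^{-1}M=0$. So the goal becomes showing $\tau_C^{-1}\Omega_C^{-1}M=0$, and the idea is to detect this by testing against the injective cogenerator $DC$, which faithfulness forces into $\Gen M$ by Lemma \ref{faithful}(d).

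The central step uses the short exact sequence of $B$-modules $0\to\tau_C^{-1}\Omega_C^{-1}M\to M\otimes_C B\to M\to0$. Applying $\Hom_B(-,X)$ for an arbitrary $X\in\Gen M$ produces the restriction map $\Hom_B(M\otimes_C B,X)\to\Hom_B(\tau_C^{-1}\Omega_C^{-1}M,X)$, and I would play two properties of this map against each other. On one hand, $M$ being $\tau_B$-rigid gives $\Ext_B^1(M,\Gen M)=0$ by Proposition \ref{GenM Result}, so the long exact sequence shows this restriction map is surjective. On the other hand, it is identically zero: a homomorphism $g\colon M\otimes_C B\to X$ sends a generator $m\otimes e$ with $e\in E$ to $g(m\otimes1)e$, and $e\in E$ annihilates $X$ because $X$ is a quotient of a power of $M$ and $E$ acts as zero on $M$. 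A map that is both surjective and zero has zero target, so $\Hom_B(\tau_C^{-1}\Omega_C^{-1}M,X)=0$ for every $X\in\Gen M$.

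To finish I would take $X=DC$: faithfulness gives $DC\in\Gen M$, and since $E$ annihilates both $\tau_C^{-1}\Omega_C^{-1}M$ and $DC$ the $B$-linear and $C$-linear Hom-spaces coincide, so $\Hom_C(\tau_C^{-1}\Omega_C^{-1}M,DC)=0$; as $DC$ is the injective cogenerator of $\mathop{\text{mod}}C$, this forces $\tau_C^{-1}\Omega_C^{-1}M=0$, hence $\id_C M\le1$. The step I expect to require the most care is the interplay of module structures underlying the previous paragraph: the whole argument rests on the observation that $E$ acts as zero on every object of $\Gen M$ (which is what makes the restriction map vanish) while $\tau_B$-rigidity makes that same map onto, and one must check that the $C$-linear epimorphism $M^d\twoheadrightarrow DC$ is genuinely a morphism of $B$-modules, so that $DC$ lies in $\Gen M$ formed in $\mathop{\text{mod}}B$ and the two Hom-spaces may legitimately be identified.
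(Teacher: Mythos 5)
Your proof is correct, and its converse direction takes a genuinely different route from the paper's (the forward direction is Proposition \ref{ID} in both, as you say). The paper argues by contradiction on the injective side: assuming $\id_CM=2$, Lemma \ref{Homological Result} gives $\Hom_C(M,\tau_C\Omega_C DC)\neq0$; it then applies $\Hom_B(M,-)$ to the coinduction sequence $0\rightarrow DC\rightarrow DB\rightarrow\tau_C\Omega_C DC\rightarrow0$, uses faithfulness and rigidity (Lemma \ref{faithful} plus Proposition \ref{GenM Result}) to get $\Ext_B^1(M,DC)=0$ and hence surjectivity of $\Hom_B(M,DB)\rightarrow\Hom_B(M,\tau_C\Omega_C DC)$, and finally factors every such map through a $C$-injective envelope $I_0$ of $M$, where $\Hom_C(I_0,\tau_C\Omega_C DC)=0$ forces the contradiction. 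You instead work directly on the induction side with $0\rightarrow\tau_C^{-1}\Omega_C^{-1}M\rightarrow M\otimes_CB\rightarrow M\rightarrow0$: rigidity makes the restriction map onto $\Hom_B(\tau_C^{-1}\Omega_C^{-1}M,X)$ surjective for $X\in\Gen M$, while the observation that $E$ annihilates every module in $\Gen M$ makes that same map zero, so the Hom-space vanishes; faithfulness enters only to place $DC$ in $\Gen M$, and the cogenerator property of $DC$ then kills $\tau_C^{-1}\Omega_C^{-1}M$, which is equivalent to $\id_CM\leq1$ by Propositions \ref{SS, Proposition 4.2} and \ref{SS Prop 4.1}. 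Your flagged subtlety is handled correctly: since $E$ acts as zero on $M^d$ and on its quotients, $\Gen M$ computed in $\mathop{\text{mod}}C$ and in $\mathop{\text{mod}}B$ coincide and the relevant Hom-spaces agree. Worth noting: your ``surjective plus zero'' step proves, with no faithfulness and no hypothesis on $\pd_C\tau_CM$, that every $\tau_C$-rigid module that is $\tau_B$-rigid satisfies $\Hom_C(\tau_C^{-1}\Omega_C^{-1}M,\Gen M)=0$ --- that is, it yields a short, approximation-free proof of the ``only if'' half of Theorem \ref{Partial Tilt Result}, which the paper obtains through the machinery of Lemma \ref{Butt Hurt}. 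What the paper's route buys is that it stays entirely within the Hom-vanishing criteria of Lemma \ref{Homological Result}; what yours buys is directness and this stronger, reusable intermediate statement.
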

\begin{proof}
If $\id_CM\leq1$, then $M$ is $\tau_B$-rigid by Proposition $\ref{ID}$.  Conversley, assume $M$ is $\tau_B$-rigid and suppose $\id_CM=2$.  Then Lemma $\ref{Homological Result}$ (b) implies $\text{Hom}_C(M,\tau_C\Omega_C(DC))\neq0$.  Consider the following short exact sequence in $\mathop{\text{mod}}B$ guaranteed by Proposition $\ref{SS, Proposition 3.6}$ and Proposition $\ref{SS Prop 4.1}$
\[
0\rightarrow DC\rightarrow DB\xrightarrow{f}\tau_C\Omega_C(DC)\rightarrow 0.
\]
Apply $\text{Hom}_B(M,-)$ to obtain the exact sequence
\[
\text{Hom}_B(M,DB)\xrightarrow{\overline{f}}\text{Hom}_B(M,\tau_C\Omega_C(DC))\rightarrow\text{Ext}_B^1(M,DC)\rightarrow\text{Ext}_B^1(M,DB).
\]
Now, $\text{Ext}_B^1(M,DB)=0$ because $DB$ is an injective $B$-module.  Also, because $M$ is a faithful $C$-module, Lemma $\ref{faithful}$ tells us that $DC$ is generated by $M$.  Thus, because $M$ is $\tau_B$-rigid, we know $\text{Ext}_B^1(M,DC)=0$ by Proposition $\ref{GenM Result}$.  This implies that $\overline{f}$ is a surjective morphism.  Thus, given any morphism $g\in\text{Hom}_B(M,\tau_C\Omega_C(DC))$, there exists a morphism $h\in\text{Hom}_B(M,DB)$ such that $g=f\circ h$.  
\par
Next, consider an injective envelope $j\!:M\rightarrow I_0$ of $M$ in $\mathop{\text{mod}}C$.  Now, $I_0$ may or may not be an injective $B$-module but $j$ is still an injective map in $\mathop{\text{mod}}B$.  Since $DB$ is an injective $B$-module, there exists a morphism $k\!:I_0\rightarrow DB$ such that $h=k\circ j$.  
\[
\begin{tikzcd}
0\arrow{r} & M\arrow{r}{j}\arrow{d}[swap]{h} & I_0\arrow{ld}{k}\\
                 & DB
\end{tikzcd} 
\]
Thus, we have $g=f\circ h=f\circ k\circ j$.  
\[
\begin{tikzcd}
M\arrow{r}{g}\arrow{d}{j} & \tau_C\Omega_C(DC)\\
I_0\arrow{ur}[swap]{f\circ k}
\end{tikzcd}
\]
But $I_0$ is an injective $C$-module and Lemma $\ref{Homological Result}$ implies $\text{Hom}_C(I_0,\tau_C\Omega_C(DC))=0$ and subsequently $\text{Hom}_B(I_0,\tau_C\Omega_C(DC))=0$.  This forces $g=f\circ k\circ j=0$.  Since $g$ was an arbitrary morphism, we conclude $\text{Hom}_B(M,\tau_C\Omega_C(DC))=0$.  But we showed $\text{Hom}_C(M,\tau_C\Omega_C(DC))\neq0$, which implies $\text{Hom}_B(M,\tau_C\Omega_C(DC))\neq0$, and we have a contradiction.  Thus, the assumption $\id_CM=2$ must be false, and we conclude $\id_CM\leq1$.
\end{proof}
\begin{cor} Suppose M is a tilting $C$-module.  Then M is $\tau_B$-tilting if and only if $\id_CM\leq1$.
\label{Tilt}
\end{cor}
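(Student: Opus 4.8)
The plan is to reduce the statement to Proposition~\ref{faithful 2} together with a count of indecomposable summands. Recall from the preliminaries that every tilting module is $\tau_C$-tilting and, by condition (3) of the definition of a tilting module, faithful. In particular $M$ is a faithful $\tau_C$-rigid module, so Proposition~\ref{faithful 2} applies directly and tells us that $M$ is $\tau_B$-rigid if and only if $\id_C M \leq 1$. Thus the only content of the corollary beyond Proposition~\ref{faithful 2} is the assertion that, for such an $M$, being $\tau_B$-tilting is equivalent to being $\tau_B$-rigid; equivalently, that the number of pairwise non-isomorphic indecomposable summands of $M$, viewed as a $B$-module, already equals the number of simple $B$-modules. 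Establishing this equivalence is where the work lies.

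To verify the count, first I would observe that $B = C \ltimes E$ has the same quiver vertices as $C$, since the trivial extension only adjoins arrows arising from the bimodule $E$; hence $B$ and $C$ have the same number $n$ of isomorphism classes of simple modules. Next, because $M$ is $\tau_C$-tilting, it has exactly $n$ pairwise non-isomorphic indecomposable summands as a $C$-module. It then remains to check that this summand count is unchanged when $M$ is regarded as a $B$-module via the action $(m,(c,e)) \mapsto mc$. The key observation is that $E$ annihilates $M$ under this action, so for any two $C$-direct summands $M_i, M_j$ of $M$ a $k$-linear map is a $B$-homomorphism if and only if it is a $C$-homomorphism; that is, $\Hom_B(M_i,M_j) = \Hom_C(M_i,M_j)$. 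Consequently $\End_B(M_i) = \End_C(M_i)$, so each $M_i$ remains indecomposable over $B$ and distinct summands remain non-isomorphic over $B$. Therefore $M$ has exactly $n$ pairwise non-isomorphic indecomposable summands as a $B$-module, which matches the number of simple $B$-modules.

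Combining these steps yields the corollary: $M$ is $\tau_B$-tilting precisely when it is $\tau_B$-rigid, since the summand count is automatic, and by Proposition~\ref{faithful 2} this holds precisely when $\id_C M \leq 1$. The only genuinely delicate point is the summand bookkeeping in the second paragraph, namely confirming that passing from $\mathop{\text{mod}}C$ to $\mathop{\text{mod}}B$ preserves both indecomposability and isomorphism classes of the summands of $M$. I expect this to be the main obstacle only in the sense that it must be stated carefully; because $E$ acts as zero on $M$, the homomorphism spaces between summands coincide over $B$ and $C$, so once this is pointed out the verification becomes routine.
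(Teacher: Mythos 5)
Your proof is correct and takes essentially the same route as the paper, which likewise deduces faithfulness of $M$ from Lemma~\ref{faithful} and then cites Proposition~\ref{faithful 2}. The summand bookkeeping in your second paragraph (that $B$ and $C$ have the same number of simples, and that $\Hom_B(M_i,M_j)=\Hom_C(M_i,M_j)$ because $E$ annihilates $M$, so indecomposability and isomorphism classes are preserved) is left implicit in the paper's one-line proof, and your careful verification of it is sound.
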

\begin{proof}
Since $M$ is a tilting $C$-module, it is faithful by Lemma $\ref{faithful}$, and our result follows from Proposition $\ref{faithful 2}$. 
\end{proof}

For an illustration of this corollary, see Examples $\ref{tauthird}$ and $\ref{tauforth}$ in section 5.
\par
We may generalize the preceding result in the special case that the algebra $C$ is tilted and $B=C\ltimes E$ is the corresponding cluster-tilted algebra.
\begin{prop} Suppose $M$ is $\tau_C$-tilting.  Then M is $\tau_B$-tilting if and only if $\id_CM\leq1$.
\end{prop}
\begin{proof}
Assume $\id_CM\leq1$.  Since $M$ is $\tau_C$-rigid, we know from Proposition $\ref{ID}$ that $M$ is also $\tau_B$-rigid.  Next, assume $M$ is $\tau_B$-tilting and suppose $\id_CM=2$.  Then at least one indecomposable summand of $M$, say $M_i$, has injective dimension equal to 2 in $\mathop{\text{mod}}C$.  By Proposition $\ref{Tilting Prop}$, we know $M_i\in\mathcal{Y}(T)$.  By Proposition $\ref{split}$, we know $(\mathcal{X}(T),\mathcal{Y}(T))$ is split which implies $\tau_CM_i\in\mathcal{Y}(T)$ and Proposition $\ref{Tilting Prop}$ gives $\pd_C\tau_CM_i\leq1$.  Thus, by Proposition $\ref{Induced}$, we have that $M_i\otimes_CB$ is $\tau_B$-rigid.
\par
By Lemma $\ref{AR submodule}$, we know $\tau_B(M_i\otimes_CB)$ is a submodule of $\tau_BM_i$.  Thus, we have $\Gen M\subseteq~^{\perp}(\tau_B(M_i\otimes_CB))$.  By Lemma $\ref{Butt Hurt 2}$, there exists an exact sequence
\[
M_i\otimes_CB\xrightarrow{f}M^0\xrightarrow{g} M^1\rightarrow 0
\]
where $f$ is a minimal left $\Gen M$-approximation of $M_i\otimes_CB$, $M^0$ and $M^1$ are in $\add M$, and we have $\add M^0\cap\add M^1=0$.  Next, consider the following short exact sequence
\[
0\rightarrow\ker g\rightarrow M^0\xrightarrow{g}M^1\rightarrow 0. 
\] 
We have a surjective morphism $f\!:M_i\otimes_CB\rightarrow\ker g$.  Using Lemma $\ref{Adjunct}$, we have a surjective morphism $f_C\!:M_i\rightarrow(\ker g)_C$ in $\mathop{\text{mod}}C$.  Since $\ker g$ is a submodule of the $C$-module $M^0$, we know $(\ker g)_C=\ker g$.  Since $\ker g\in\Gen M_i$, we have a contradiction to proposition $\ref{GenM Result}$.  Also, the sequence can not split because Lemma $\ref{Butt Hurt 2}$ guarantees $\add M^0\cap\add M^1=0$.

The only remaining possibility is $M^1=0$.  Since $M$ is sincere by Proposition $\ref{Sincere}$, we must have $M_i\otimes_CB\cong M^0$ by Lemma $\ref{extra}$.  This is clearly a contradiction and implies $\id_CM_i\leq1$.  Since $M_i$ was arbitrary, we conclude $\id_CM\leq1$.
\end{proof}

\section{Projective Covers and $\tau_B$-rigidity}
In this section, we wish to use a module's projective cover to determine whether a $C$-module is $\tau_B$-rigid.  We being with projective $C$-modules.  We derive a necessary and sufficient condition for a projective $C$-module to be $\tau_B$-rigid.
 \begin{prop} 
 \label{Projective Rigid}
 Let $P$ be a projective $C$-module with $\overline{P}$ a projective cover of $\tau_C^{-1}\Omega_C^{-1}P$ in $\mathop{\emph{mod}}C$.  Then P is $\tau_B$-rigid if and only if $\emph{Hom}_C(\overline{P},P)=0$.
 \end{prop}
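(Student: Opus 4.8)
The plan is to produce an explicit minimal projective presentation of $P$ in $\mathop{\text{mod}}B$ and then apply the rigidity criterion of Proposition~\ref{Useful Observations}(c). Since $P$ is projective over $C$, its projective cover in $\mathop{\text{mod}}C$ is the identity, so Lemma~\ref{My Projective Cover} shows that the natural surjection $P\otimes_CB\to P$ of Proposition~\ref{SS, Proposition 3.6}(a) is a projective cover of $P$ in $\mathop{\text{mod}}B$. Its kernel is $P\otimes_CE\cong\tau_C^{-1}\Omega_C^{-1}P$ by Propositions~\ref{SS, Proposition 3.6}(a) and \ref{SS Prop 4.1}(c); this is $\Omega_BP$. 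Applying Lemma~\ref{My Projective Cover} once more, now to the projective cover $\overline{P}\to\tau_C^{-1}\Omega_C^{-1}P$ in $\mathop{\text{mod}}C$, shows that $\overline{P}\otimes_CB$ is a projective cover of $\Omega_BP$ in $\mathop{\text{mod}}B$. Composing these, I obtain a minimal projective presentation
\[
\overline{P}\otimes_CB\xrightarrow{d_1}P\otimes_CB\xrightarrow{d_0}P\rightarrow0
\]
in $\mathop{\text{mod}}B$, in which $d_1$ factors as $\overline{P}\otimes_CB\twoheadrightarrow\Omega_BP\hookrightarrow P\otimes_CB$, so that $\text{im}\,d_1=\Omega_BP=P\otimes_CE$.

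By Proposition~\ref{Useful Observations}(c), $P$ is $\tau_B$-rigid if and only if the map $(d_1,P)\colon\text{Hom}_B(P\otimes_CB,P)\to\text{Hom}_B(\overline{P}\otimes_CB,P)$ is surjective. The key observation is that this map is identically zero. Indeed, every $\phi\in\text{Hom}_B(P\otimes_CB,P)$ vanishes on $P\otimes_CE$: writing $e\in E$ as the element $(0,e)\in B$, we have $p\otimes e=(p\otimes 1_B)(0,e)$, whence $\phi(p\otimes e)=\phi(p\otimes 1_B)\cdot(0,e)=0$, because $E$ acts as zero on $P$ under the $B$-action $m(c,e)=mc$. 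Since $\text{im}\,d_1=P\otimes_CE$, it follows that $\phi\circ d_1=0$ for every $\phi$, so $(d_1,P)=0$.

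Consequently $(d_1,P)$ is surjective if and only if its target $\text{Hom}_B(\overline{P}\otimes_CB,P)$ vanishes, and by the adjunction $\text{Hom}_B(X\otimes_CB,Y)\cong\text{Hom}_C(X,Y)$ (applied to the $B$-module $P$, whose restriction to $C$ is again $P$) one has $\text{Hom}_B(\overline{P}\otimes_CB,P)\cong\text{Hom}_C(\overline{P},P)$. Combining these equivalences yields that $P$ is $\tau_B$-rigid if and only if $\text{Hom}_C(\overline{P},P)=0$, as desired.

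I expect the main obstacle to be the bookkeeping in assembling the minimal projective presentation in $\mathop{\text{mod}}B$ -- namely verifying, via Lemma~\ref{My Projective Cover}, that the induced modules $P\otimes_CB$ and $\overline{P}\otimes_CB$ genuinely serve as the projective cover of $P$ and of its first syzygy, and identifying the image of the connecting map $d_1$ with $P\otimes_CE$. Once that is in place, the decisive step -- the vanishing of $(d_1,P)$ coming from the trivial action of $E$ on $P$ -- is short, and the adjunction then converts the statement about $B$-modules into the desired condition over $C$.
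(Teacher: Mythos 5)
Your proof is correct, and although it shares the paper's scaffolding---the same minimal projective presentation $\overline{P}\otimes_CB\xrightarrow{d_1}P\otimes_CB\xrightarrow{d_0}P\rightarrow0$ in $\mathop{\text{mod}}B$ assembled from Proposition \ref{SS, Proposition 3.6}, Proposition \ref{SS Prop 4.1} and Lemma \ref{My Projective Cover}, and the same rigidity criterion, Proposition \ref{Useful Observations}(c)---your decisive step is genuinely different from the paper's. The paper argues the two directions separately and homologically: for sufficiency it decomposes $\overline{P}\otimes_CB\cong\overline{P}\oplus\tau_C^{-1}\Omega_C^{-1}\overline{P}$ as $C$-modules and kills the second summand via Lemma \ref{Homological Result} (using $\pd_CP\leq1$), and for necessity it factors $d_1$ through $\tau_C^{-1}\Omega_C^{-1}P$ and invokes Lemma \ref{Homological Result} again to conclude that every composite $k\circ d_1$ vanishes. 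You replace both appeals to Lemma \ref{Homological Result} by the single module-theoretic observation that $(d_1,P)$ is identically zero, because $\mathrm{im}\,d_1=(P\otimes_CB)E$ while $E$ annihilates $P$ under the standard embedding; surjectivity of the zero map is then equivalent to the vanishing of its target, and the adjunction $\Hom_B(\overline{P}\otimes_CB,P)\cong\Hom_C(\overline{P},P)$ finishes in one stroke (the paper reaches the same vanishing more circuitously, through the cover $\overline{P}\otimes_CB\rightarrow\overline{P}$ and restriction of scalars). Your route buys two things: it handles both implications simultaneously, and its key step is independent of the global-dimension-2 machinery---the only place $\mathop{\text{gl.dim}}C=2$ enters is the identification $P\otimes_CE\cong\tau_C^{-1}\Omega_C^{-1}P$, so your argument in fact proves the analogue for any split extension of $C$ by a nilpotent bimodule, with $\overline{P}$ a projective cover of $P\otimes_CE$. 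One small citation quibble, which applies equally to the paper's own proof: Lemma \ref{My Projective Cover} \emph{assumes} that $P_0\otimes_CB\rightarrow P_0$ is a projective cover in $\mathop{\text{mod}}B$, so your first application of it (with $f=\mathrm{id}_P$) really rests on the standard fact that this holds because $P\otimes_CE$ lies in the radical of the projective $B$-module $P\otimes_CB$ (as $E\subseteq\mathrm{rad}\,B$); it would be worth saying this explicitly.
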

 \begin{proof}
 In $\mathop{\text{mod}}B$, consider the following short exact sequence guaranteed by Proposition $\ref{SS, Proposition 3.6}$ and Proposition $\ref{SS Prop 4.1}$
 \[
 0\rightarrow\tau_C^{-1}\Omega_C^{-1}P\xrightarrow{f}P\otimes_CB\xrightarrow{g}P\rightarrow 0.
 \]
 Since $\overline{P}\otimes_CB$ is a projective cover of $\tau_C^{-1}\Omega_C^{-1}P$ in $\mathop{\text{mod}}B$ by Lemma $\ref{My Projective Cover}$, we have a minimal projective presentation
 \[
 \overline{P}\otimes_CB\xrightarrow{h}\ P\otimes_CB\xrightarrow{g}P\rightarrow 0.
 \] 
 By Proposition $\ref{Useful Observations}$, $P$ is $\tau_B$-rigid if and only if $\text{Hom}_B(P\otimes_CB,P)\xrightarrow{\overline{h}}\text{Hom}_B(\overline{P}\otimes_CB,P)$ is surjective.  Assume $\text{Hom}_C(\overline{P},P)=0$.  Considering $\overline{P}\otimes_CB$ as a $C$-module, we know $\overline{P}\otimes_CB\cong(\overline{P}\otimes_CC)\oplus(\overline{P}\otimes_CE)$.  Now, $\overline{P}\otimes_CC\cong\overline{P}$ and Proposition $\ref{SS Prop 4.1}$ implies that $\overline{P}\otimes_CE\cong\tau_C^{-1}\Omega_C^{-1}\overline{P}$.  We have $\Hom_C(\overline{P},P)=0$ by assumption and $\Hom_C(\tau_C^{-1}\Omega_C^{-1}\overline{P},P)=0$ by Lemma $\ref{Homological Result}$.  Thus, $\text{Hom}_B(\overline{P}\otimes_CB,P)=0$ and clearly $\overline{h}$ will be surjective.  We conclude $P$ is $\tau_B$-rigid.  
 \par
 Conversely, assume $P$ is $\tau_B$-rigid.  Then $\overline{h}$ must be a surjective morphism, i.e., given any morphism $j\in\text{Hom}_B(\overline{P}\otimes_CB,P)$, there exists a morphism $k\in\text{Hom}_B(P\otimes_CB,P)$ such that $j=k\circ h$.  
 \[
\begin{tikzcd}
& \overline{P}\otimes_CB\arrow{d}{j=k\circ h}\arrow{dl}[swap]{h}\\
P\otimes_CB\arrow{r}{k} & P
\end{tikzcd}
\]  
But $h$ must factor through $\tau_C^{-1}\Omega_C^{-1}P$, and $\text{Hom}_B(\tau_C^{-1}\Omega_C^{-1}P,P)=0$ by Lemma $\ref{Homological Result}$.  This implies that $j$ must be the 0 morphism, and thus $\text{Hom}_B(\overline{P}\otimes_CB,P)=0$.  Since $\overline{P}\otimes_CB$ is the projective cover of $\overline{P}$, we must have $\text{Hom}_B(\overline{P},P)=0$.  By restriction of scalars, $\Hom_C(\overline{P},P)=0$.   
\end{proof}

 \begin{prop} 
\label{Mini 1}
Let M be a $\tau_C$-rigid module with $f\!:P_0\rightarrow M$ a projective cover in $\mathop{\emph{mod}}C$.  If $\emph{Hom}_C(\tau_C^{-1}\Omega_C^{-1}P_0,\Gen M)=0$, then M is $\tau_B$-rigid.
\end{prop}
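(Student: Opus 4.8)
The plan is to prove $M$ is $\tau_B$-rigid by establishing $\Ext_B^1(M,\Gen M)=0$ and then invoking Proposition \ref{GenM Result} (with $X=Y=M$ over $B$). So I fix an arbitrary $N\in\Gen M$ in $\mathop{\text{mod}}B$ and aim to show $\Ext_B^1(M,N)=0$. The first thing I would record are two consequences of $N\in\Gen M$. Since $M$ carries the $B$-structure in which $E$ acts by zero, any $B$-module generated by $M$ is annihilated by $E$; hence $N$ is (the restriction of) a $C$-module, and the generating epimorphism $M^d\to N$ is also a $C$-epimorphism, so $N\in\Gen M$ in $\mathop{\text{mod}}C$. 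Because $M$ is $\tau_C$-rigid, Proposition \ref{GenM Result} applied over $C$ then gives $\Ext_C^1(M,N)=0$.

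Next I would assemble the start of a projective resolution of $M$ over $B$. Writing $\Omega_C M=\ker f$ and letting $g\colon P_0\otimes_C B\to P_0$ be the canonical surjection of Proposition \ref{SS, Proposition 3.6}(a), whose kernel is $P_0\otimes_C E\cong\tau_C^{-1}\Omega_C^{-1}P_0$ by Proposition \ref{SS Prop 4.1}, Lemma \ref{My Projective Cover} says $fg\colon P_0\otimes_C B\to M$ is a projective cover in $\mathop{\text{mod}}B$. Setting $\Omega_B M=\ker(fg)$, a routine kernel computation (the snake lemma) yields a short exact sequence of $B$-modules
\[
0\rightarrow P_0\otimes_C E\rightarrow\Omega_B M\xrightarrow{\;g\;}\Omega_C M\rightarrow 0.
\]
Applying $\Hom_B(-,N)$ to $0\to\Omega_B M\to P_0\otimes_C B\to M\to 0$ and using projectivity of $P_0\otimes_C B$ identifies $\Ext_B^1(M,N)$ with $\coker\rho$, where $\rho\colon\Hom_B(P_0\otimes_C B,N)\to\Hom_B(\Omega_B M,N)$ is restriction. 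Thus it suffices to prove $\rho$ is surjective.

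Here is where the hypothesis enters. Given $\phi\in\Hom_B(\Omega_B M,N)$, its restriction to the submodule $P_0\otimes_C E$ lies in $\Hom_B(P_0\otimes_C E,N)\subseteq\Hom_C(\tau_C^{-1}\Omega_C^{-1}P_0,N)$, which vanishes by the assumption $\Hom_C(\tau_C^{-1}\Omega_C^{-1}P_0,\Gen M)=0$. Hence $\phi$ factors as $\phi=\bar\phi\circ g$ for some $\bar\phi\colon\Omega_C M\to N$. Viewing $\bar\phi$ over $C$ and using $\Ext_C^1(M,N)=0$, I can extend it along $\Omega_C M\hookrightarrow P_0$ to a $C$-map $\psi\colon P_0\to N$ with $\psi|_{\Omega_C M}=\bar\phi$. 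I then pass to the adjoint $B$-map $\tilde\psi\colon P_0\otimes_C B\to N$, $\tilde\psi(p\otimes b)=\psi(p)\,b$, and claim $\rho(\tilde\psi)=\phi$. For $x\in\Omega_B M$ one writes $x=g(x)\otimes 1_B+x'$ with $x'\in P_0\otimes_C E$ (indeed $g(x-g(x)\otimes 1_B)=0$); then $\tilde\psi(x')=0$ since $E$ annihilates $N$, while $\tilde\psi(g(x)\otimes 1_B)=\psi(g(x))=\bar\phi(g(x))=\phi(x)$.

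The step I expect to be the main obstacle is precisely this last verification that the adjoint $B$-map restricts to the originally given $\phi$: the decomposition $x=g(x)\otimes 1_B+x'$ together with the vanishing of the $E$-action on $N\in\Gen M$ is exactly what forces $\tilde\psi|_{\Omega_B M}=\phi$. Once this is checked, $\rho$ is surjective, so $\Ext_B^1(M,N)=\coker\rho=0$; as $N\in\Gen M$ was arbitrary we obtain $\Ext_B^1(M,\Gen M)=0$, and Proposition \ref{GenM Result} gives $\Hom_B(M,\tau_B M)=0$, i.e. $M$ is $\tau_B$-rigid.
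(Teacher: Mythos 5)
Your proof is correct and follows essentially the same route as the paper: the paper proves this proposition by re-running the proof of Theorem \ref{Main Ext} with $M$ replaced by $\Gen M$ to obtain $\Ext_B^1(M,\Gen M)=0$ and then citing Proposition \ref{GenM Result}, and your argument---building the $B$-projective cover $P_0\otimes_CB\to M$ via Lemma \ref{My Projective Cover}, presenting $\Omega_BM$ as an extension of $\Omega_CM$ by $P_0\otimes_CE\cong\tau_C^{-1}\Omega_C^{-1}P_0$, and extending maps using $\Ext_C^1(M,\Gen M)=0$---is exactly that modified proof written out in full. Your preliminary observation that every $B$-module in $\Gen M$ is annihilated by $E$ and hence lies in $\Gen M$ over $C$ is a detail the paper leaves implicit, and your verification of it is correct.
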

\begin{proof}
We modify the proof of Theorem $\ref{Main Ext}$ by replacing $\text{Hom}_C(\tau_C^{-1}\Omega_C^{-1}P_0,M)=0$ with the assumption $\text{Hom}_C(\tau_C^{-1}\Omega_C^{-1}P_0,\Gen M)=0$.  The concluding statement is now $\text{Ext}_B^1(M,\Gen M)=0$ and we conclude by Proposition $\ref{GenM Result}$ that $M$ is $\tau_B$-rigid. 
\end{proof}
\begin{cor} If M is $\tau_C$-rigid, and $\pd_CX\leq1$ for every module $X\in\Gen M$, then M is $\tau_B$-rigid.
\end{cor}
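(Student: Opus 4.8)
The plan is to reduce the statement directly to Proposition \ref{Mini 1}. Let $f\colon P_0\rightarrow M$ be a projective cover of $M$ in $\mathop{\text{mod}}C$. By Proposition \ref{Mini 1}, in order to conclude that $M$ is $\tau_B$-rigid it suffices to verify the single vanishing condition $\text{Hom}_C(\tau_C^{-1}\Omega_C^{-1}P_0,\Gen M)=0$. Thus the whole task is to propagate the hypothesis ``$\pd_C X\leq 1$ for every $X\in\Gen M$'' into the vanishing of this particular $\text{Hom}$-space.

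The key observation I would use is that the functor $\tau_C^{-1}\Omega_C^{-1}(-)$ agrees with $-\otimes_C E$ by Proposition \ref{SS Prop 4.1}(c), and is therefore additive and commutes with finite direct sums. Since $P_0$ is projective it is a direct summand of $C^n$ for some $n\geq 0$; applying this functor, $\tau_C^{-1}\Omega_C^{-1}P_0$ becomes a direct summand of $\tau_C^{-1}\Omega_C^{-1}C^n\cong(\tau_C^{-1}\Omega_C^{-1}C)^n$, where the last isomorphism again uses Proposition \ref{SS Prop 4.1}(c) applied to $C^n\otimes_C E\cong(C\otimes_C E)^n$.

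Now I would fix an arbitrary $X\in\Gen M$. By hypothesis $\pd_C X\leq 1$, so Lemma \ref{Homological Result}(a) gives $\text{Hom}_C(\tau_C^{-1}\Omega_C^{-1}C,X)=0$, and hence $\text{Hom}_C((\tau_C^{-1}\Omega_C^{-1}C)^n,X)=0$. Because $\tau_C^{-1}\Omega_C^{-1}P_0$ is a direct summand of this module, it follows that $\text{Hom}_C(\tau_C^{-1}\Omega_C^{-1}P_0,X)=0$ as well. Since $X\in\Gen M$ was arbitrary, this yields $\text{Hom}_C(\tau_C^{-1}\Omega_C^{-1}P_0,\Gen M)=0$, and Proposition \ref{Mini 1} then gives that $M$ is $\tau_B$-rigid.

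There is essentially no hard step here, as the statement is a direct corollary of Proposition \ref{Mini 1}. The only point requiring care is the passage from the cotranslated syzygy of the regular module $C$, to which Lemma \ref{Homological Result}(a) applies verbatim, over to that of the general projective cover $P_0$. This is precisely why I would invoke the identification $\tau_C^{-1}\Omega_C^{-1}(-)\cong -\otimes_C E$ of Proposition \ref{SS Prop 4.1}(c): it guarantees the additivity that realizes $\tau_C^{-1}\Omega_C^{-1}P_0$ as a summand of a power of $\tau_C^{-1}\Omega_C^{-1}C$, without which Lemma \ref{Homological Result}(a), stated only for $C$, could not be applied.
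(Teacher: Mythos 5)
Your proof is correct and takes essentially the same route as the paper: both reduce the statement to Proposition \ref{Mini 1} by using Lemma \ref{Homological Result} to translate the hypothesis $\pd_C X\leq 1$ for all $X\in\Gen M$ into $\Hom_C(\tau_C^{-1}\Omega_C^{-1}C,\Gen M)=0$. The only difference is that you spell out the passage from $C$ to the projective cover $P_0$ (realizing $\tau_C^{-1}\Omega_C^{-1}P_0$ as a summand of $(\tau_C^{-1}\Omega_C^{-1}C)^n$ via Proposition \ref{SS Prop 4.1}(c)), a step the paper leaves implicit.
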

\begin{proof}
Since $\pd_CX\leq1$ for every module $X\in\Gen M$, $\text{Hom}_C(\tau_C^{-1}\Omega_C^{-1}C,\Gen M)=0$ by Lemma $\ref{Homological Result}$.  Our result follows from Proposition $\ref{Mini 1}$.  
\end{proof}
\begin{cor} Let M be $\tau_C$-rigid with $f\!:P_0\rightarrow M$ a projective cover in $\mathop{\emph{mod}}C$.  If $P_0$ is $\tau_B$-rigid, then M is $\tau_B$-rigid.
\end{cor}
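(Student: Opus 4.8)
The plan is to reduce the statement to Proposition~\ref{Mini 1}, whose hypothesis on a $\tau_C$-rigid module $M$ with projective cover $f\colon P_0\rightarrow M$ is precisely the vanishing $\mathrm{Hom}_C(\tau_C^{-1}\Omega_C^{-1}P_0,\Gen M)=0$. Since $M$ is assumed $\tau_C$-rigid, the only thing I must manufacture from the hypothesis ``$P_0$ is $\tau_B$-rigid'' is this Hom-vanishing; Proposition~\ref{Mini 1} then delivers the conclusion directly.

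The first step is the elementary observation that $\Gen M\subseteq\Gen P_0$. Indeed, the projective cover $f\colon P_0\rightarrow M$ is an epimorphism, so any $X\in\Gen M$, being a quotient of some $M^d$, is also a quotient of $P_0^d$, hence lies in $\Gen P_0$. Consequently it suffices to prove the \emph{stronger} vanishing $\mathrm{Hom}_C(\tau_C^{-1}\Omega_C^{-1}P_0,\Gen P_0)=0$, which has the advantage of involving only the module $P_0$ for which we control $\tau_B$-rigidity.

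To use that control, I would invoke Proposition~\ref{Projective Rigid} applied to the projective module $P=P_0$: letting $\overline{P_0}$ denote a projective cover of $\tau_C^{-1}\Omega_C^{-1}P_0$ in $\mathop{\text{mod}}C$, the $\tau_B$-rigidity of $P_0$ is equivalent to $\mathrm{Hom}_C(\overline{P_0},P_0)=0$. The key step is then a standard lifting argument. Given $X\in\Gen P_0$ with an epimorphism $p\colon P_0^d\rightarrow X$ and any morphism $a\colon\tau_C^{-1}\Omega_C^{-1}P_0\rightarrow X$, I would precompose with the projective cover $\pi\colon\overline{P_0}\rightarrow\tau_C^{-1}\Omega_C^{-1}P_0$ and lift $a\circ\pi$ through $p$, using that $\overline{P_0}$ is projective and $p$ is an epimorphism; this produces $b\colon\overline{P_0}\rightarrow P_0^d$ with $p\circ b=a\circ\pi$. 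Since $\mathrm{Hom}_C(\overline{P_0},P_0^d)\cong\mathrm{Hom}_C(\overline{P_0},P_0)^d=0$, we get $b=0$, hence $a\circ\pi=0$, and because $\pi$ is surjective, $a=0$.

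This yields $\mathrm{Hom}_C(\tau_C^{-1}\Omega_C^{-1}P_0,\Gen P_0)=0$, and therefore $\mathrm{Hom}_C(\tau_C^{-1}\Omega_C^{-1}P_0,\Gen M)=0$ by the containment $\Gen M\subseteq\Gen P_0$; Proposition~\ref{Mini 1} then finishes the argument. The only mildly delicate point is the lifting step, but it is routine once the reduction to $\Gen P_0$ is in place. The real content of the proof is recognizing that $\tau_B$-rigidity of the projective cover $P_0$ is exactly the hypothesis of Proposition~\ref{Projective Rigid}, and that the inclusion $\Gen M\subseteq\Gen P_0$ transports the resulting vanishing into the hypothesis of Proposition~\ref{Mini 1}.
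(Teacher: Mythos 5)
Your proof is correct and follows essentially the same route as the paper's: both use Proposition~\ref{Projective Rigid} to translate the $\tau_B$-rigidity of $P_0$ into $\Hom_C(\overline{P},P_0)=0$, then run the same projectivity-lifting argument to kill any morphism $\tau_C^{-1}\Omega_C^{-1}P_0\rightarrow X$, and finish with Proposition~\ref{Mini 1}. Your explicit reduction to $\Gen P_0$ via $\Gen M\subseteq\Gen P_0$ is just a repackaging of the paper's composite epimorphism $k\circ f^d\colon P_0^d\rightarrow X$, and your final step (deducing $a=0$ from $a\circ\pi=0$ because the projective cover $\pi$ is surjective) states cleanly what the paper phrases slightly loosely.
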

\begin{proof}
Consider $g\!:\overline{P}\rightarrow\tau_C^{-1}\Omega_C^{-1}P_0$ a projective cover in $\mathop{\text{mod}}C$.  Since $P_0$ is $\tau_B$-rigid by assumption, we know $\text{Hom}_C(\overline{P},P_0)=0$ by Proposition $\ref{Projective Rigid}$.  Suppose there exists a morphism $h\!:\tau_C^{-1}\Omega_C^{-1}P_0\rightarrow X$ with $X\in\Gen M$.  This also gives a morphism $h\circ g\!:\overline{P}\rightarrow X$ because $\overline{P}$ is a projective $C$-module.  Since $X\in\Gen M$, we have a surjective morphism $k\!:M^d\rightarrow X$.  Combining with the fact $P_0$ is a projective cover of $M$, we have a surjective morphism $k\circ f^d\!:P_0^d\rightarrow X$.  However, since $\overline{P}$ is a projective $C$-module, we have an induced morphism $j\!:\overline{P}\rightarrow P_0^d$ such that $h\circ g=k\circ f^d\circ j$ and the following diagram commutes.  
\[
\begin{tikzcd}
& & \overline{P}\arrow{ddll}[swap]{j}\arrow{d}{g}\\
& & \tau_C^{-1}\Omega_C^{-1}P_0\arrow{d}{h}\\
P_0^d\arrow{r}{f^d} & M^d\arrow{r}{k} & X
\end{tikzcd}
\] 

But $\text{Hom}_C(\overline{P},P_0)=0$ and $j$ must be the 0 morphism.  If $g$ is non-zero then we must have that $h$ is also the 0 morphism.  Since $h$ was arbitrary, we conclude $\text{Hom}_C(\tau_C^{-1}\Omega_C^{-1}P_0,X)=0$ and Proposition $\ref{Mini 1}$ implies $M$ is $\tau_B$-rigid. 
\end{proof}

  We have the following corollary in the special case that $M$ is partial tilting and the projective dimension of $\tau_CM$ is not necessarily less than or equal to 1 nor is the algebra $C$ assumed to be tilted.
\begin{cor} Let M be a partial tilting C-module with $f\!:P_0\rightarrow M$ a projective cover in $\mathop{\emph{mod}}C$.  If $\emph{Hom}_C(\Omega_C(\tau_C^{-1}\Omega_C^{-1}P_0),M)=0$, then M is $\tau_B$-rigid.
\end{cor}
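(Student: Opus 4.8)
The plan is to verify the hypothesis of Proposition \ref{Mini 1}, namely that $\Hom_C(\tau_C^{-1}\Omega_C^{-1}P_0,\Gen M)=0$; once this is known, Proposition \ref{Mini 1} immediately yields that $M$ is $\tau_B$-rigid (note that $M$ is $\tau_C$-rigid, being partial tilting). Write $N=\tau_C^{-1}\Omega_C^{-1}P_0$, which by Proposition \ref{SS Prop 4.1}(c) is $P_0\otimes_C E$. The first, easy, observation is that $\Hom_C(N,M)=0$: since $P_0$ is projective it is a direct summand of some $C^n$, so $N$ is a direct summand of $E^{\oplus n}$, and because $M$ is partial tilting we have $\pd_C M\leq 1$, whence $\Hom_C(E,M)=0$ by Lemma \ref{Homological Result}(a) together with the identification $E\cong\tau_C^{-1}\Omega_C^{-1}C$ of Proposition \ref{SS Prop 4.1}(a).

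The real content is to upgrade this to $\Hom_C(N,X)=0$ for \emph{every} $X\in\Gen M$, and this is exactly where the syzygy hypothesis is used. Fix a surjection $M^d\to X$ with kernel $L$, so that $0\to L\to M^d\to X\to 0$ is exact. Applying $\Hom_C(N,-)$ and using $\Hom_C(N,M^d)=0$ from the previous step produces an injection $\Hom_C(N,X)\hookrightarrow\Ext_C^1(N,L)$, so it suffices to prove $\Ext_C^1(N,L)=0$. For this I would take a projective cover $0\to\Omega_C N\to\overline{P}\to N\to 0$ of $N$ in $\mathop{\text{mod}}C$ and apply $\Hom_C(-,L)$; since $\overline{P}$ is projective, $\Ext_C^1(\overline{P},L)=0$, so $\Ext_C^1(N,L)$ is a quotient of $\Hom_C(\Omega_C N,L)$. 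Finally, since $L$ embeds in $M^d$ and $\Hom_C(\Omega_C N,-)$ is left exact, $\Hom_C(\Omega_C N,L)$ embeds in $\Hom_C(\Omega_C N,M)^{d}$, which is $0$ by the hypothesis $\Hom_C(\Omega_C(\tau_C^{-1}\Omega_C^{-1}P_0),M)=0$. Hence $\Hom_C(\Omega_C N,L)=0$, so $\Ext_C^1(N,L)=0$, and therefore $\Hom_C(N,X)=0$.

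Combining the two steps gives $\Hom_C(\tau_C^{-1}\Omega_C^{-1}P_0,\Gen M)=0$, and Proposition \ref{Mini 1} concludes that $M$ is $\tau_B$-rigid.

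The main obstacle, and the only nonformal point, is the passage from the single module $M$ to the whole class $\Gen M$: the vanishing $\Hom_C(N,M)=0$ comes for free from $\pd_C M\le 1$, but Proposition \ref{Mini 1} demands vanishing against every generated module, and $\Hom_C(N,-)$ is not exact. The hypothesis on $\Omega_C N$ is precisely the leverage that kills the obstruction term $\Ext_C^1(N,L)$ for the kernel $L$ of each generating epimorphism $M^d\to X$, which is what makes the argument work without the assumption $\pd_C\tau_C M\leq 1$ needed in Theorem \ref{Partial Tilt Result}.
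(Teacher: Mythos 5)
Your proof is correct, and while it shares the paper's overall strategy---both arguments verify the hypothesis $\Hom_C(\tau_C^{-1}\Omega_C^{-1}P_0,\Gen M)=0$ of Proposition \ref{Mini 1}, both use the syzygy sequence $0\to\Omega_CN\to P_1\to N\to 0$ of $N=\tau_C^{-1}\Omega_C^{-1}P_0$, and both feed in the same two vanishings, namely $\Hom_C(N,M)=0$ from $\pd_CM\leq1$ via Lemma \ref{Homological Result} and $\Hom_C(\Omega_CN,M)=0$ by hypothesis---your mechanism for passing from $M$ to all of $\Gen M$ is genuinely different. The paper sandwiches $\Hom_C(P_1,M)$ between the two vanishing terms in the sequence obtained by applying $\Hom_C(-,M)$, concludes $\Hom_C(P_1,M)=0$, then uses projectivity of $P_1$ (every map $P_1\to X$ lifts along a generating epimorphism $M^d\twoheadrightarrow X$) to obtain $\Hom_C(P_1,\Gen M)=0$ in one stroke, and finally pulls back along the surjection $P_1\twoheadrightarrow N$ to get $\Hom_C(N,\Gen M)=0$. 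You instead fix $X\in\Gen M$, resolve it as $0\to L\to M^d\to X\to 0$, embed $\Hom_C(N,X)$ into the obstruction $\Ext_C^1(N,L)$ via the connecting map, and kill that obstruction by exhibiting $\Ext_C^1(N,L)$ as a quotient of $\Hom_C(\Omega_CN,L)$, which embeds in $\Hom_C(\Omega_CN,M)^d=0$ by left exactness. The paper's chase is slightly shorter and avoids $\Ext$ groups entirely; yours makes the homological content explicit, isolating exactly where the hypothesis on $\Omega_CN$ enters (killing $\Ext_C^1(N,L)$ for the kernels $L$ of generating epimorphisms), at the small cost of quantifying over $X$. A minor bonus of your writeup: you justify $\Hom_C(N,M)=0$ by observing $N$ is a summand of $E^{\oplus n}$, which spells out the summand argument the paper leaves implicit when it applies Lemma \ref{Homological Result} to $\tau_C^{-1}\Omega_C^{-1}P_0$ rather than $\tau_C^{-1}\Omega_C^{-1}C$.
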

\begin{proof}
Consider the following short exact sequence in $\mathop{\text{mod}}C$
\begin{equation}\tag{1}
0\rightarrow\Omega_C^1(\tau_C^{-1}\Omega_C^{-1}P_0)\rightarrow P_1\rightarrow\tau_C^{-1}\Omega_C^{-1}P_0\rightarrow 0 
\end{equation}
where $P_1$ is a projective cover of $\tau_C^{-1}\Omega_C^{-1}P_0$.  Apply $\text{Hom}_C(-,M)$ to obtain the exact sequence
\[
\text{Hom}_C(\tau_C^{-1}\Omega_C^{-1}P_0,M)\rightarrow\text{Hom}_C(P_1,M)\rightarrow\text{Hom}_C(\Omega_C^1(\tau_C^{-1}\Omega_C^{-1}P_0),M).
\]
Since $M$ is a partial tilting module we know $\pd_CM\leq1$.  Thus, $\text{Hom}_C(\tau_C^{-1}\Omega_C^{-1}P_0,M)=0$ by Lemma $\ref{Homological Result}$.  Also, $\text{Hom}_C(\Omega_C^1(\tau_C^{-1}\Omega_C^{-1}P_0),M)=0$ by asumption.  Since the sequence is exact, we have $\text{Hom}_C(P_1,M)=0$.  Since $P_1$ is a projective $C$-module, this further implies that $\text{Hom}_C(P_1,\Gen M)=0$.  Apply $\text{Hom}_C(-,\Gen M)$ to sequence (1) to obtain the exact sequence
\[
0\rightarrow\text{Hom}_C(\tau_C^{-1}\Omega_C^{-1}P_0,\Gen M)\rightarrow\text{Hom}_C(P_1,\Gen M).
\]
Since $\text{Hom}_C(P_1,\Gen M)=0$ and the sequence is exact, $\text{Hom}_C(\tau_C^{-1}\Omega_C^{-1}P_0,\Gen M)=0$.  By Proposition $\ref{Mini 1}$, we have that $M$ is $\tau_B$-rigid.
\end{proof}
Next, we examine the special case where $M$ is a semisimple $C$-module.  We recall that a module $M$ is $\it{semi simple}$ if it is a direct sum of simple modules. 
 \begin{prop} Let M be a $\tau_C$-rigid semisimple C-module.  Consider $f\!:P_0\rightarrow M$ a projective cover and $g\!:M\rightarrow I_0$ an injective envelope in $\mathop{\emph{mod}}C$.  
 \begin{enumerate}
 \item[$\emph{(a)}$] If $\emph{Hom}_C(\tau_C^{-1}\Omega_C^{-1}P_0,M)=0$, then M is $\tau_B$-rigid.
 \item[$\emph{(b)}$] If $\emph{Hom}_C(M,\tau_C\Omega_C I_0)=0$, then M is $\tau_B$-rigid.
 \end{enumerate}
 \end{prop}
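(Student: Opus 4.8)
The plan is to reduce $\tau_B$-rigidity to ordinary $B$-rigidity, which is already under control through Theorem \ref{Main Ext}, and then to exploit the fact that for a semisimple module these two notions coincide. So the whole argument splits into an application of Theorem \ref{Main Ext} followed by one structural observation about $\Gen M$.

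First I would feed each hypothesis directly into Theorem \ref{Main Ext}. Since $M$ is $\tau_C$-rigid it is in particular rigid as a $C$-module, so it satisfies the standing hypothesis of that theorem with the prescribed projective cover $f\!:P_0\rightarrow M$ and injective envelope $g\!:M\rightarrow I_0$. Under the assumption of (a), namely $\text{Hom}_C(\tau_C^{-1}\Omega_C^{-1}P_0,M)=0$, Theorem \ref{Main Ext}(a) gives that $M$ is a rigid $B$-module; under the assumption of (b), Theorem \ref{Main Ext}(b) gives the same conclusion. Hence in either case $\text{Ext}_B^1(M,M)=0$.

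The key step, and the place where semisimplicity is used, is the identity $\Gen M=\add M$ computed in $\mathop{\text{mod}}B$. Because $E$ is nilpotent it lies in the radical of $B$, so $B$ and $C=B/E$ share the same simple modules; consequently the semisimple $C$-module $M$ remains semisimple when regarded as a $B$-module. Now any $X\in\Gen M$ admits an epimorphism $M^d\rightarrow X$, and a quotient of a semisimple module is again semisimple with simple constituents among those of $M$, so $X$ is a direct summand of some power of $M$ and thus lies in $\add M$. Therefore $\text{Ext}_B^1(M,X)=0$ follows from $\text{Ext}_B^1(M,M)=0$, giving $\text{Ext}_B^1(M,\Gen M)=0$. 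Applying Proposition \ref{GenM Result} with $X=Y=M$ then yields $\text{Hom}_B(M,\tau_BM)=0$, i.e. $M$ is $\tau_B$-rigid, and this settles both (a) and (b) simultaneously.

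The main obstacle is precisely establishing $\Gen M=\add M$: one must check both that semisimplicity survives the passage from $C$ to $B$ (which is where the nilpotence of $E$ enters) and that $M$-generated quotients stay inside $\add M$. Once this identity is in hand, rigidity and $\tau_B$-rigidity become interchangeable for our semisimple $M$, and the two parts are immediate from Theorem \ref{Main Ext} and Proposition \ref{GenM Result}. I do not expect any further difficulty, as the remaining verifications are routine unwindings of the definitions of $\Gen$ and of $\tau_B$-rigidity.
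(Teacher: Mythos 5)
Your proof is correct and follows essentially the same route as the paper: apply Theorem \ref{Main Ext} to get $B$-rigidity, use semisimplicity to identify $\Gen M$ with $\add M$, and conclude via Proposition \ref{GenM Result}. In fact you supply a detail the paper leaves implicit --- that $\Gen M$ must be computed in $\mathop{\text{mod}}B$ and that $M$ stays semisimple over $B$ because $E$ acts as zero and lies in the radical --- which is a worthwhile clarification rather than a deviation.
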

 \begin{proof}
We prove (a) with the proof of (b) being similar.  By assumption, we have $M$ is $\tau_C$-rigid and 
$\text{Hom}_C(\tau_C^{-1}\Omega_C^{-1}P_0,M)=0$.  Thus, we know from Theorem $\ref{Main Ext}$ that $M$ is a rigid B-module.  Since $M$ is semisimple, we have that $\Gen M$ = $\add M$.  Thus, we have 
\[
\text{Ext}_B^1(M,\Gen M)=\text{Ext}_B^1(M,\add M)=\text{Ext}_B^1(M,M)=0.  
\]
By Proposition $\ref{GenM Result}$, we conclude $M$ is $\tau_B$-rigid. 
 \end{proof}

\section{Examples}
In this section we illustrate our main results with several examples.  We will use the following throughout this section.  Let $A$ be the path algebra of the following quiver:
\[
\xymatrix@R10pt{&&&4\ar[dl]\\1&2\ar[l]&3\ar[l]\\ &&&5\ar[ul]}
\]
\par
Since $A$ is a hereditary algebra, we may construct a tilted algebra.  To do this, we need an $A$-module which is tilting.  Consider the Auslander-Reiten quiver of $A$ which is given by:
\[
\xymatrix@C=10pt@R=0pt{
{\begin{array}{c} \bf 1\end{array}}\ar[dr]&&
{\begin{array}{c} 2\end{array}}\ar[dr]&&
{\begin{array}{c} 3\end{array}}\ar[dr]&&
{\begin{array}{c}  \bf 4\ 5\\  \bf 3\\ \bf 2\\ \bf 1 \end{array}}\ar[dr]&&
\\
&{\begin{array}{c}  \bf 2\\ \bf 1\end{array}}\ar[dr]\ar[ur]&&
{\begin{array}{c} 3\\2\end{array}}\ar[dr]\ar[ur]&&
{\begin{array}{c} 4\,5\\33\\2\\1\end{array}}\ar[dr]\ar[ur]&&
{\begin{array}{c} 4\,5\\3\\2\end{array}}\ar[dr]&&
\\
&&{\begin{array}{c} 3\\2\\1\end{array}}\ar[dr]\ar[ur]\ar[r]&
{\begin{array}{c} 4\\3\\2\\1\end{array}}\ar[r]&
{\begin{array}{c} 4\,5\\33\\22\\1\end{array}}\ar[dr]\ar[ur]\ar[r]&
{\begin{array}{c} 5\\3\\2\end{array}}\ar[r]&
{\begin{array}{c} 4\,5\\33\\2\end{array}}\ar[dr]\ar[ur]\ar[r]&
{\begin{array}{c} 4\\3\end{array}}\ar[r]&
{\begin{array}{c} 4\,5\\3\end{array}}\ar[dr]\ar[r]&
{\begin{array}{c}  \bf 5\end{array}}
\\
&&&{\begin{array}{c} \bf  5\\ \bf 3\\ \bf 2\\ \bf 1\end{array}}\ar[ur]&&
{\begin{array}{c} 4\\3\\2\end{array}}\ar[ur]&&
{\begin{array}{c} 5\\3\end{array}}\ar[ur]&&
{\begin{array}{c} 4\end{array}}
}
\]

Let $T$ be the tilting $A$-module
\[
T=
{\begin{array}{c} 5\end{array}} \oplus
{\begin{array}{c} 4\,5\\3\\2\\1\end{array}} \oplus
{\begin{array}{c} 5\\3\\2\\1\end{array}} \oplus
{\begin{array}{c} 2\\1\end{array}} \oplus
{\begin{array}{c} 1\end{array}} 
\]

The corresponding titled algebra $C=\text{End}_AT$ is given by the bound quiver
$$\begin{array}{cc}
\xymatrix{1\ar[r]^\za&2\ar[r]^\zb&3\ar[r]^\zg&4\ar[r]&5}
&\quad\za\zb\zg=0\end{array}$$

Then, the Auslander-Reiten quiver of $C$ is given by: 

$$\xymatrix@C=10pt@R=0pt
{&&&{\begin{array}{c} 2\\ 3\\4\\5 \end{array}}\ar[dr]&&
&&
\\
&&{\begin{array}{c} 3\\4\\5 \end{array}}\ar[ur]\ar[dr]&&
{\begin{array}{c} 2\\ 3\\4 \end{array}}\ar[dr]&
\\
&{\begin{array}{c} 4\\5  \end{array}}\ar[ur]\ar[dr]&&
{\begin{array}{c} 3\\ 4  \end{array}}\ar[ur]\ar[dr]&&
{\begin{array}{c} 2\\3  \end{array}}\ar[r]\ar[dr]&
{\begin{array}{c} 1\\2\\ 3 \end{array}}\ar[r]&
{\begin{array}{c} 1\\2 \end{array}}\ar[dr]&&
\\
{\begin{array}{c}\ \\5\\ \  \end{array}}\ar[ur]&&
{\begin{array}{c}\ \\ 4\\ \  \end{array}}\ar[ur]&&
{\begin{array}{c}\ \\ 3\\ \  \end{array}}\ar[ur]&&
{\begin{array}{c}\ \\ 2\\ \  \end{array}}\ar[ur]&&
{\begin{array}{c}\ \\ 1\\ \  \end{array}}&&
}$$

The corresponding cluster-tilted algebra $B=C\ltimes\text{Ext}_C^2(DC,C)$ is given by the bound quiver 
$$\begin{array}{cc}
\xymatrix{1\ar[r]^\za&2\ar[r]^\zb&3\ar[r]^\zg&4\ar@/^20pt/[lll]^\zd\ar[r]&5}
&\quad \za\zb\zg=\zb\zg\zd=\zg\zd\za=\zd\za\zb=0 
\end{array}$$

Then, the Auslander-Retien quiver of $B$ is given by:

$$\xymatrix@C=8pt@R=0pt
{&&{\begin{array}{c} \end{array}}
&&{\begin{array}{c} 2\\ 3\\4\\5 \end{array}}\ar[dr]&&
{\begin{array}{c} \end{array}}&& 
{\begin{array}{c} 5 \end{array}}\ar[dr]&&
{\begin{array}{c} 4\\ 1\\2 \end{array}}\ar[dr]&&
{\begin{array}{c} \end{array}}
\\
&{\begin{array}{c} 4\\1 \end{array}}\ar[dr]&&
{\begin{array}{c} 3\\4\\5 \end{array}}\ar[ur]\ar[dr]&&
{\begin{array}{c} 2\\3\\4 \end{array}}\ar[dr]&&
{\begin{array}{c} \end{array}}&&
{\begin{array}{l} \ 4\\1\ 5\\2 \end{array}}\ar[ur]\ar[dr]&&
{\begin{array}{c} 4\\1 \end{array}}&&
\\
&{\begin{array}{c} 4\\ 5 \end{array}}\ar[r]&
{\begin{array}{c} 3\\44\\ 1\ 5 \end{array}}\ar[ur]\ar[dr]\ar[r]&
{\begin{array}{c} 3\\4\\1 \end{array}}\ar[r]&
{\begin{array}{c} 3\\4 \end{array}}\ar[ur]\ar[dr]&
{\begin{array}{c} \end{array}}&
{\begin{array}{c} 2\\ 3 \end{array}}\ar[dr]\ar[r]&
{\begin{array}{c} 1\\2\\ 3 \end{array}}\ar[r]&
{\begin{array}{c} 1\\2 \end{array}}\ar[dr]\ar[ur]&
{\begin{array}{c} \end{array}}&
{\begin{array}{c} 4\\1\ 5 \end{array}}\ar[dr]\ar[r]\ar[ur]&
{\begin{array}{c} 3\\4\\1\ 5 \end{array}}&&
\\
&{\begin{array}{c}3 \\ 4\\ 1\ 5 \end{array}}\ar[ur]
&&{\begin{array}{c}\ \\ 4\\ \  \end{array}}\ar[ur]&&
{\begin{array}{c}\ \\ 3\\ \  \end{array}}\ar[ur]&&
{\begin{array}{c}\ \\ 2\\ \  \end{array}}\ar[ur]&&
{\begin{array}{c}\ \\ 1\\ \  \end{array}}\ar[ru]&&
{\begin{array}{c}\ \\ 4\\ 5  \end{array}}
}$$

We will use Lemma $\ref{Homological Result}$ frequently so we note that 
\[
\tau_C^{-1}\Omega_C^{-1}C={\begin{array}{c}1\\2\end{array}}\oplus{\begin{array}{c}1\end{array}}~~,~~
\tau_C\Omega_C(DC)={\begin{array}{c}3\\4\end{array}}\oplus{\begin{array}{c}4\end{array}}.
\]
We will illustrate Theorem $\ref{Main1}$ and Proposition $\ref{Main Prop}$.  We will start with Theorem $\ref{Main1}$.
\begin{example}
\label{taufirst}
Consider the $C$-module $M={\begin{array}{c}1\\2\\3\end{array}}\oplus{\begin{array}{c}3\\4\end{array}}$.  Then $M$ is partial tilting and $\tau_C^{-1}\Omega_C^{-1}M=1$.  Since $1\in\Gen M$, we have $\text{Hom}_C(\tau_C^{-1}\Omega_C^{-1}M,\Gen M)\neq0$.  Note that $\tau_BM={\begin{array}{c}3\\44\\ 1\ 5\end{array}}$ and $\text{Hom}_B(M,\tau_BM)\neq0$ in accordance with Theorem $\ref{Main1}$. 
\end{example}
\begin{example}
\label{tausecond}
Consider the $C$-module $N={\begin{array}{c}3\\4\\5\end{array}}\oplus{\begin{array}{c}3\\4\end{array}}\oplus{\begin{array}{c}4\end{array}}$.  Then $N$ is partial tilting and $\tau_C^{-1}\Omega_C^{-1}N={\begin{array}{c}1\\2\end{array}}\oplus{\begin{array}{c}1\end{array}}$.  It is easily seen that $\text{Hom}_C(\tau_C^{-1}\Omega_C^{-1}N,\Gen N)=0$.  We note that $\tau_BN={\begin{array}{c}3\\44\\ 1\ 5\end{array}}\oplus{\begin{array}{c}3\\4\\1\ 5\end{array}}\oplus{\begin{array}{c}4\\1\end{array}}$ and $\text{Hom}_B(N,\tau_BN)=0$ in accordance with Theorem $\ref{Main1}$.
\end{example}
The next two examples will illustrate Proposition $\ref{Main Prop}$.
\begin{example}
\label{tauthird}
Consider the tilting $C$-module 
\[
M={\begin{array}{c}4\end{array}}\oplus{\begin{array}{c}4\\5\end{array}}\oplus{\begin{array}{c}3\\4\\5\end{array}}\oplus{\begin{array}{c}1\\2\\3\end{array}}\oplus{\begin{array}{c}2\\3\\4\\5\end{array}}.
\]
Recall that $\tau_C\Omega_C(DC)={\begin{array}{c}3\\4\end{array}}\oplus{\begin{array}{c}4\end{array}}$.  Since $\text{Hom}_C(M,\tau_C\Omega_C(DC))\neq0$, Lemma $\ref{Homological Result}$ says $\id_CM=2$.  Note that $\tau_BM={\begin{array}{c}1\end{array}}\oplus{\begin{array}{c}4\\1\end{array}}\oplus{\begin{array}{c}3\\4\\1\ 5\end{array}}$ and we have $\text{Hom}_B(M,\tau_BM)\neq0$ in accordance with Corollary $\ref{Tilt}$.
\end{example}
\begin{example}
\label{tauforth}
Consider the tilting $C$-module
\[
T={\begin{array}{c}2\end{array}}\oplus{\begin{array}{c}2\\3\end{array}}\oplus{\begin{array}{c}2\\3\\4\end{array}}\oplus{\begin{array}{c}1\\2\\3\end{array}}\oplus{\begin{array}{c}2\\3\\4\\5\end{array}}.
\]
Since $\text{Hom}_C(T,\tau_C\Omega_C(DC))=0$, Lemma $\ref{Homological Result}$ says $\id_CT\leq1$.
We note that 
\[
\tau_CT\cong\tau_BT={\begin{array}{c}3\end{array}}\oplus{\begin{array}{c}3\\4\end{array}}\oplus{\begin{array}{c}3\\4\\5\end{array}} 
\]
and $\text{Hom}_B(T,\tau_BT)=0$ in accordance with Corollary $\ref{Tilt}$.
\end{example}
\begin{example} 
In Proposition $\ref{Main Prop}$, the condition $M$ is $\tau_C$-tilting is necessary.  If we only assume $M$ is support $\tau_C$-tilting, the statement is no longer true.  Consider the support $\tau_C$-tilting module
\[
M={\begin{array}{c}5\end{array}}\oplus{\begin{array}{c}4\\5\end{array}}\oplus{\begin{array}{c}3\\4\\5\end{array}}.
\]
Here, $\id_CM=2$ yet $M$ is support $\tau_B$-tilting.
\end{example}

\noindent Department of Mathematics, University of Connecticut-Waterbury, Waterbury, CT 06702, USA
\\
\it{E-mail address}: \bf{stephen.zito@uconn.edu}

\end{document}